\documentclass[11pt, a4paper]{article} 

\setlength{\bigskipamount}{5ex plus1.5ex minus 2ex}
\setlength{\textheight}{24.2cm} 
\setlength{\textwidth}{15cm}
\setlength{\hoffset}{-1.2cm}
\setlength{\voffset}{-2.3cm}

\usepackage{amssymb,amsmath,amsthm,color}
\usepackage{latexsym,dsfont,amsfonts,amsmath,amssymb}
\usepackage{mathrsfs}
\usepackage{pdfsync}

\usepackage{mathtools}


\newcommand{\pd}[3]{\frac{\partial ^{#1} #2}{\partial #3}}
\newcommand{\dd}[3]{\dfrac{\mathrm{d}^{#1}#2}{\mathrm{d}#3^{#1}}}

\newcommand{\norm}[2]{\ensuremath{\|#1\|_{#2}}}

\newcommand{\N}[0]{\mathbb{N}}

\newcommand{\R}[0]{\mathbb{R}}



\makeatletter
\newcommand{\BIG}{\bBigg@{3}}
\newcommand{\vast}{\bBigg@{4}}
\newcommand{\Vast}{\bBigg@{5}}
\makeatother


\newcommand{\gammatilde}{\gamma}

\usepackage[normalem]{ulem}

\setlength{\bigskipamount}{5ex plus1.5ex minus 2ex}

\newtheorem{theorem}{Theorem}
\newtheorem{lemma}[theorem]{Lemma}

\theoremstyle{definition}
\newtheorem{remark}[theorem]{Remark}

\newcommand{\bsDelta}{{\boldsymbol{\Delta}}}

\newcommand{\bsrho}{{\boldsymbol{\rho}}}

\newcommand{\bspsi}{{\boldsymbol{\psi}}}

\newcommand{\bst}{{\boldsymbol{t}}}

\newcommand{\bsx}{{\boldsymbol{x}}}

\newcommand{\bsy}{{\boldsymbol{y}}}

\newcommand{\bsz}{{\boldsymbol{z}}}
\newcommand{\bsA}{{\boldsymbol{A}}}

\newcommand{\bsPhi}{{\boldsymbol{\Phi}}}

\newcommand{\rd}{\mathrm{d}}

\newcommand{\bbR}{\mathbb{R}}

\newcommand{\bbE}{\mathbb{E}}
\newcommand{\bbone}{\mathds{1}}

\newcommand{\calC}{\mathcal{C}}
\newcommand{\setD}{\mathcal{D}}

\newcommand{\calH}{\mathcal{H}}

\newcommand{\calL}{\mathcal{L}}

\newcommand{\calO}{\mathcal{O}}

\newcommand{\calV}{\mathcal{V}}
\newcommand{\calW}{\mathcal{W}}

\newcommand{\tr}{{\sf T}}

\newcommand{\bigO}{\mathcal{O}}

\renewcommand{\forall}{\text{ for all }}

\def\R{\mathbb{R}}

\newcommand{\setu}{{\mathrm{\mathfrak{u}}}}
\newcommand{\setv}{{\mathrm{\mathfrak{v}}}}
\newcommand{\setw}{{\mathrm{\mathfrak{w}}}}

\newcommand{\mask}[1]{{}}

\definecolor{darkred}{RGB}{139,0,0}
\definecolor{darkgreen}{RGB}{0,100,0}
\definecolor{darkmagenta}{RGB}{170,0,120}
\definecolor{darkpurple}{RGB}{110,0,180}
\definecolor{darkblue}{RGB}{40,0,200}
\definecolor{darkbrown}{rgb}{0.75,0.40,0.15}

\newcommand{\be}{\begin{equation}}
\newcommand{\ee}{\end{equation}}
\newcommand{\bea}{\begin{eqnarray}}
\newcommand{\eea}{\end{eqnarray}}
\newcommand{\beas}{\begin{eqnarray*}}
\newcommand{\eeas}{\end{eqnarray*}}


\newlength{\CapLen}
\AtBeginDocument{\settoheight{\CapLen}{A}}


\def\r2p{{\sqrt{2\pi}}}
%



\newcommand{\subsecintro}[1]{\subsubsection*{#1}}  

\title{Equivalence between Sobolev spaces of first-order dominating mixed smoothness
and unanchored ANOVA spaces on $\mathbb{R}^d$}

\makeatletter
\let\@fnsymbol\@arabic
\makeatother

\author{Alexander D. Gilbert\footnotemark[1] \and
             Frances Y. Kuo\footnotemark[1] \and
             Ian H. Sloan\footnotemark[1]}


\date{\today}

\begin{document}

\maketitle

\footnotetext[1]{School of Mathematics and Statistics, University of New South Wales,
                          Sydney, NSW 2052, Australia\\
                          \texttt{alexander.gilbert@unsw.edu.au}, \texttt{f.kuo@unsw.edu.au},
                           \texttt{i.sloan@unsw.edu.au}
                          }

\begin{abstract}
We prove that a variant of the classical Sobolev space of first-order
dominating mixed smoothness is equivalent (under a certain condition) to
the unanchored ANOVA space on $\mathbb{R}^d$, for $d \geq 1$. Both spaces
are Hilbert spaces involving \emph{weight functions}, which determine the
behaviour as different variables tend to $\pm \infty$, and \emph{weight
parameters}, which represent the influence of different subsets of
variables. The unanchored ANOVA space on $\mathbb{R}^d$ was initially introduced
by Nichols \& Kuo in 2014 to analyse the error of \emph{quasi-Monte Carlo}
(QMC) approximations for integrals on unbounded domains; whereas the
classical Sobolev space of dominating mixed smoothness was used as
the setting in a series of papers by Griebel, Kuo \& Sloan on the
smoothing effect of integration, in an effort to develop a rigorous theory
on why QMC methods work so well for certain non-smooth integrands with
\emph{kinks} or \emph{jumps} coming from option pricing problems. In this
same setting, Griewank, Kuo, Le\"ovey \& Sloan in 2018 subsequently
extended these ideas by developing a practical \emph{smoothing by
preintegration} technique to approximate integrals of such functions with
kinks or jumps.

We first prove the equivalence in one dimension (itself a non-trivial
task), before following a similar, but more complicated, strategy to prove
the equivalence for general dimensions.
As a consequence of this equivalence, we analyse applying QMC combined
with a preintegration step to approximate the fair price of an Asian
option, and prove that the error of such an approximation using $N$ points
converges at a rate close to $1/N$.
\end{abstract}

\section{Introduction}

In this paper we establish equivalence between a variant of the classical
Sobolev (Hilbert) space $\calH = \calH_d$ of real-valued functions with
first-order dominating mixed smoothness on $\R^d$, and a reproducing
kernel Hilbert space $\calW = \calW_d$ introduced in \cite{NK14}.
Throughout this paper we will refer to $\calH$ as a ``\emph{Sobolev
space}'', and refer to $\calW$ as an ``\emph{ANOVA space}'' due to an
intimate connection with the \emph{ANOVA decomposition} of functions. 
(Specifically, the ANOVA decomposition of a function in $\calW$, or $\calH$,
is an orthogonal decomposition with respect to the inner product in $\calW$,
see \eqref{eq:norm-decomp} below). 
Both spaces involve \emph{weight functions} (see $\rho_j$ and
$\psi_j$ below) to control the behaviour of the functions and their mixed
derivatives as the $j$th variable $x_j$ goes to $\pm\infty$. Both spaces
also involve \emph{weight parameters} (see $\gamma_\setu$ below) that
moderate the relative importance of subsets of variables. We prove that
the spaces $\calH$ and $\calW$ are equivalent provided that the weight
functions satisfy a certain condition (see \eqref{eq:strong} below).

\subsecintro{Motivation}

The motivation for this work requires a side trip in this introduction.
Since the mid 1990s, \emph{Quasi-Monte Carlo} (QMC) methods
\cite{DKS13,DP10,Nie92,SJ94} have been a powerful tool for practitioners,
but presented a challenge to theorists, arising from the unexpected
success of QMC methods in tackling practical high dimensional integrals
from mathematical finance---a popular example being the pricing of
path-dependent options. Many research papers, e.g.,
\cite{AcBroadGlass97,BoyBroadGlass97,LemLEc98,PapTraub96,PasTraub95,WangSloan06},
demonstrated empirically that applying QMC methods to a range of finance
problems gave significantly faster convergence rates than the commonly
used \emph{Monte Carlo} (MC) simulations. Yet it is surprising that QMC
beats MC for these problems, since both the classical and more recent QMC
theory cannot be applied, because the integrands typically involve
``kinks'' or ``jumps'' (i.e., integrands and/or their first partial
derivatives are not continuous) and so fail to satisfy the smoothness
requirements of the theory.

One approach to explaining the success of QMC for finance problems was by
way of the concept of \emph{effective dimension}
\cite{CMO97,WangFang03}. In principle any $d$-variate function can be
expressed in terms of its unique ANOVA (\emph{ANalysis Of VAriance})
decomposition
\[
  f = \sum_{\setu\subseteq\setD} f_\setu,
\]
where $\setD \coloneqq \{1,2,\ldots,d\}$, each term $f_\setu$ depends only on the
variables $x_j$ with indices $j\in\setu$, and the $2^d$ terms are
$\calL^2$ orthogonal (with respect to a weight function; see below). It is
generally accepted that QMC works well when $f$ has a low \emph{truncation
dimension} (i.e., $f$ is dominated by the contributions from ANOVA terms
involving only a small number of early variables) or a low
\emph{superposition dimension} (i.e., $f$ is dominated by a few ANOVA
terms, each involving only a small number of variables). Although in those
papers an explicit ANOVA decomposition was not carried out in practice,
the handwaving justification for the QMC success was that, behind the
scenes, QMC (for some initially unexplained reason) approximates well the
low-dimensional contributions to the integral, while the remaining
high-dimensional contributions collectively make a negligible
contribution. Recent work has developed rigorous definitions of truncation
\cite{KritzPillWas16} and superposition \cite{GilWas17} dimensions in
certain function space settings,
but for finance applications the problem remains that typical integrands
fail to satisfy the smoothness assumptions.

A series of papers \cite{GKS10,GKS13,GKS17note,GKS17} provided partial
justification for the ``low effective dimension'' argument, by proving
that most of the ANOVA terms of the option pricing integrands are smooth.
Specifically, it was proved in \cite{GKS17note} that, with the single
exception of the very last term with $\setu = \setD$, all $2^d-1$ other
ANOVA terms $f_\setu$ belong to the Sobolev space $\calH$ (details to be
given below). A subsequent paper \cite{GKLS18} took the theory one step
further by developing a practical \emph{smoothing by preintegration}
technique for cubature over $\R^d$, whereby a strategically chosen
coordinate is integrated out first (either analytically or numerically
using a high precision $1$-dimensional quadrature rule) to yield a new
function involving the remaining $d-1$ variables, and a high-dimensional
cubature rule can then be applied to the resulting ``preintegrated''
function, which by the theory in \cite{GKLS18} was shown to belong to the
Sobolev space $\calH_{d-1}$. However, the paper \cite{GKLS18} could not at
that time be used to guarantee the success of QMC combined with
preintegration because the necessary QMC analysis had been carried out not
in $\calH$ but in $\calW$.

Since QMC theory has the unit cube as its natural setting, its extension
to the unbounded region $\R^d$ has necessitated development of a new
theoretical setting. In particular, the paper \cite{NK14} derived a new
reproducing kernel Hilbert space (RKHS) $\calW$ and proved that the
generating vector for a \emph{randomly shifted lattice rule} can be
constructed using a component-by-component algorithm to achieve the
optimal rate of convergence.

In summary, on the one hand much progress has been made in justifying the
``low effective dimension" argument \cite{GKS10,GKS13,GKS17note,GKS17} and
then in developing from it a practical preintegration technique
\cite{GKLS18}, all in the setting of the Sobolev space~$\calH$; while on
the other hand a constructive QMC theory has been developed in the setting
of~$\calW$~\cite{NK14}. But this QMC methodology could not validly be used
with preintegration without knowledge of the relationship between the two
spaces.

This issue is completely resolved in the present paper. We show that the
two spaces $\calH$ and $\calW$ are indeed equivalent, under an appropriate
condition on the weight functions, with embedding constants expressed
explicitly in terms of the weight functions and weight parameters. As a
result there is now available a complete QMC-based strategy, with solid
theoretical foundations, for tackling the high-dimensional integrals
arising from option pricing.

Before moving on, we note that the basic idea of \emph{smoothing by
preintegration} is not original to \cite{GKLS18}, and is a special case of
\emph{conditioning} or \emph{conditional sampling}, see, e.g.,
\cite[Sec.~7.2.3]{Glasserman}. Indeed, several conditional sampling
methods using different quadrature rules (such as MC, QMC and sparse
grids) have previously been applied to option pricing problems in, e.g.,
\cite{ACN13a,ACN13b,BST17,GlaSta01,GKLS18,Hol11,WWH17}. The contribution
of \cite{GKLS18} was to formalise the notion of preintegration, and prove
that the $(d-1)$-dimensional preintegrated function will be sufficiently
smooth.

Having provided the background motivation for this work, in the remainder
of this introduction we summarise our key results, and discuss how our
present paper relates to recent work
\cite{GnewHefHinRit17,GnewHefHinRitWas17,GnewHefHinRitWas19,
HefRit15,HefRitWas16,HinSchneid16,KritzPillWas17} on embeddings of similar
spaces.

\subsecintro{The $1$-dimensional case}

Let $\rho: \R \to \R_+$ be a probability density function defined on $\R$,
and let $\psi: \bbR\to \R_+$ be a locally integrable function such
that $1/\psi$ is also locally integrable. Let $\gamma>0$ be a weight
parameter (it plays little role in one dimension). Starting from the set
of all locally integrable functions on $\bbR$, the Sobolev space $\calH$
and the unanchored ANOVA space $\calW$ each contains those functions for
which the respective norm is finite:
\begin{align} \label{eq:Hnorm1D}
 \|f\|_\calH^2
 &\,\coloneqq\, \int_{-\infty}^\infty |f(x)|^2\, \rho(x)\,\rd x\;\;
 + \frac{1}{\gamma} \int_{-\infty}^\infty | f'(x)|^2\,\psi(x)\,\rd x,
 \\
 \label{eq:Wnorm1D}
 \|f\|_\calW^2
 &\,\coloneqq\, \,
 \bigg|\int_{-\infty}^\infty f(x)\, \rho(x)\,\rd x\bigg|^2 \,
 + \frac{1}{\gamma} \int_{-\infty}^\infty |f'(x)|^2\, \psi(x)\,\rd x.
\end{align}
Here $f'$ is the weak derivative, which is defined to be the
locally integrable function satisfying
\[
\int_{-\infty}^\infty f'(x) v(x) \, \rd x 
\,=\, 
-\int_{-\infty}^\infty f(x) \dd{}{}{x} v(x) \, \rd x
\]
for all smooth functions $v$ with compact support.
Thus the functions in $\calH$ are square-integrable against the weight
function $\rho$, and their first derivatives are square-integrable against
the weight function $\psi$. On the other hand, the functions in $\calW$
only needs to be integrable against the weight function $\rho$. We
summarise this as
\begin{align*}
 f\in \calH &\iff f\in \calL^2_\rho \quad\mbox{and}\quad f'\in\calL^2_\psi, \\
 f\in \calW &\iff f\in \calL^1_\rho \quad\mbox{and}\quad f'\in\calL^2_\psi.
\end{align*}
It follows from the Cauchy--Schwarz inequality that $\|f\|_\calW^2 \le
\|f\|_\calH^2$ and so $\calH$ is embedded in $\calW$. Obviously $\calH$ is
embedded in $\calL^2_\rho$, but $\calW$ may or may not be embedded in
$\calL^2_\rho$. Since trivially $\|f\|_\calW^2 \le \|f\|_\calH^2 \le
\|f\|_{\calL^2_\rho}^2 + \|f\|_\calW^2$, we see that the two spaces
$\calH$ and $\calW$ are equivalent if and only if $\calW$ is embedded in
$\calL^2_\rho$.

We remark that the two norms \eqref{eq:Hnorm1D} and \eqref{eq:Wnorm1D}
differ in just their first terms, but to establish the norm equivalence we
need to make use of their common second term involving the derivative
of~$f$. This hinges upon the interplay between the two weight functions
$\rho$ and $\psi$.

Let
\begin{align} \label{eq:cdf}
  \Phi(x) \,\coloneqq\, \int_{-\infty}^x \rho(t) \,\rd t
\end{align}
denote the distribution function corresponding to the density $\rho$.
Throughout we consider two different conditions on the relationship
between the weight functions $\rho$ and $\psi$. In Section~\ref{sec:1D} we
show that $\calW$ is a RKHS if the pair of weight functions $(\rho,\psi)$
satisfies the \textbf{weaker condition}
\begin{equation} \label{eq:weak}
 \int_{-\infty}^{c} \frac{(\Phi(t))^2}{\psi(t)} \,\rd t < \infty
 \quad\mbox{and}\quad
 \int_{c}^\infty \frac{(1 - \Phi(t))^2}{\psi(t)} \,\rd t < \infty
 \quad\mbox{for all finite $c$}.
\end{equation}
Furthermore, $\calW$ is embedded in $\calL^2_\rho$ if the pair
$(\rho,\psi)$ satisfies the \textbf{stronger condition}
\begin{equation} \label{eq:strong}
 \int_{-\infty}^{c} \frac{\Phi(t)}{\psi(t)} \,\rd t < \infty
 \quad\mbox{and}\quad
 \int_{c}^\infty \frac{1 - \Phi(t)}{\psi(t)} \,\rd t < \infty
 \quad\mbox{for all finite $c$}.
\end{equation}
This allows us in this paper under the stronger condition to establish the
\textbf{norm equivalence}
\begin{align} \label{eq:equiv1D}
  \|f\|_\calW^2 \,\le\, \|f\|_\calH^2 \,\le\, (1 + \gamma\,C(\rho,\psi))\, \|f\|_\calW^2,
\end{align}
with
\begin{equation} \label{eq:C-def}
  C(\rho,\psi) \,\coloneqq\, \int_{-\infty}^\infty \frac{\Phi(t)\,(1 - \Phi(t))}{\psi(t)} \,\rd t \,<\, \infty,
\end{equation}
where finiteness is due to \eqref{eq:strong}. Hence we conclude that
\emph{the two spaces $\calH$ and $\calW$ are equivalent under the stronger
condition \eqref{eq:strong}}.

The reproducing kernel for $\calW$ was derived in \cite{NK14} under the
assumption that the stronger condition \eqref{eq:strong} holds from the
outset, and so $\calW$ is embedded in $\calL^2_\rho$ by assumption. The
question of whether the reproducing property exists under the weaker
condition \eqref{eq:weak} was not considered in that paper. Moreover,
the results in \cite{NK14} were not proved in the generality claimed there
and this is repaired in the current work.
Note additionally that in this paper we write $\psi(x)$ instead of
$(\psi(x))^2$, which was the notation in \cite{NK14}.

The condition \eqref{eq:weak} on its own is not sufficient to establish the equivalence of $\calW$ and
 $\calH$.  This can be seen by choosing $\rho$ to be the standard normal density and 
 $\psi(x)\coloneqq \exp(-\tfrac{3}{4} x^2)$, noting that with this choice \eqref{eq:weak} is satisfied but 
 \eqref{eq:strong} is not; yet the function $f(x)= 1/\sqrt{\rho(x)}$ belongs to $\calW$, but 
not to $\calH$, making $\calW$ strictly larger than $\calH$.  
This example also shows that the equivalence shown in this paper 
is not a trivial consequence of standard embeddings. 

\subsecintro{The $d$-dimensional case}

Consider now a potentially different pair of weight functions
$(\rho_j,\psi_j)$ for each coordinate index $j=1,\ldots,d$, and a weight
parameter $\gamma_\setu$ for every subset $\setu\subseteq\setD$. As in
\eqref{eq:cdf} we denote the distribution function of $\rho_j$ by $\Phi_j$
for each $j$.

In \cite{NK14} the ANOVA space $\calW$ was extended to $d$ dimensions by
defining its reproducing kernel to be a particular sum of products of
$1$-dimensional kernels. This particular representation gives an
impression that the resulting function space $\calW_d$ may not include all
functions in the classical Sobolev space $\calH_d$. To close this
loophole, in this paper we will not define $\calW_d$ in terms of the
reproducing kernel, and additionally do not assume any product structure.
Instead we will define the spaces in complete analogy to our one
dimensional case, as follows.

Starting from the set of all locally integrable functions on $\bbR^d$, the
Sobolev space $\calH_d$ and the unanchored ANOVA space $\calW_d$ each
contains those functions for which the respective norm is finite:
\begin{align}
\label{eq:H-norm}
 \|f\|_{\calH_d}^2
 &\coloneqq \sum_{\setu\subseteq\setD} \frac{1}{\gamma_\setu}
 \int_{\R^d} |\partial^\setu f(\bsx)|^2\,
 \bspsi_\setu(\bsx_\setu)\, \bsrho_{\setD\setminus\setu}(\bsx_{\setD\setminus\setu}) \,\rd \bsx,
 \\
 \|f\|_{\calW_d}^2 \label{eq:W-norm}
 &\coloneqq
 \sum_{\setu\subseteq\setD} \frac{1}{\gamma_\setu}
 \int_{\R^{|\setu|}}
 \!\bigg| \int_{\R^{d-|\setu|}} \!\partial^\setu f(\bsx_\setu,\bsx_{\setD\setminus\setu})\,
 \bsrho_{\setD\setminus\setu}(\bsx_{\setD\setminus\setu})\,\rd\bsx_{\setD\setminus\setu}\bigg|^2
 \bspsi_\setu(\bsx_\setu) \,\rd\bsx_{\setu},
\end{align}
where $\partial^\setu f$ denotes the \emph{weak derivative} (see
\eqref{eq:weak_D} below) of $f$ respect to the ``active variables'' (the
ones being differentiated), $\bsx_\setu \coloneqq \{x_j : j\in\setu\}$,
which in turn are weighted by the product $\bspsi_\setu(\bsx_\setu) \coloneqq
\prod_{j\in\setu} \psi_j(x_j)$, while the ``inactive variables'' are
weighted by the product
$\bsrho_{\setD\setminus\setu}(\bsx_{\setD\setminus\setu}) \coloneqq
\prod_{j\in\setD\setminus\setu} \rho_j(x_j)$. Since each $\rho_j$ is a
probability density function, the Cauchy--Schwarz inequality implies that
$\|f\|_{\calW_d}^2 \le \|f\|_{\calH_d}^2$ and therefore $\calH_d$ is
embedded in $\calW_d$. We also know that $\calH_d$ is embedded in
$\calL^2_\bsrho$, with $\bsrho(\bsx) \coloneqq \prod_{j=1}^d \rho_j(x_j)$. The
question is again whether or not $\calW_d$ is embedded in
$\calL^2_{\bsrho}$.

We prove in Section~\ref{sec:HD} that $\calW_d$ is a reproducing kernel
Hilbert space if the weaker condition \eqref{eq:weak} holds for all pairs
of weight functions $(\rho_j,\psi_j)$, and furthermore that $\calW_d$ is
indeed embedded in $\calL^2_{\bsrho}$ if the stronger condition
\eqref{eq:strong} holds for all pairs $(\rho_j,\psi_j)$. In turn, with the
condition \eqref{eq:strong} we prove the \textbf{norm equivalence}
\begin{align} \label{eq:equiv}
  \|f\|_{\calW_d}^2 \,\le\, \|f\|_{\calH_d}^2 \,\le\,
  \bigg(\max_{\setv\subseteq\setD} \sum_{\setw\subseteq\setv} \frac{\gamma_\setv}{\gamma_{\setv\setminus\setw}}
   \prod_{j\in\setw} C(\rho_j,\psi_j) \bigg)\, \|f\|_{\calW_d}^2,
\end{align}
where $C(\rho_j,\psi_j)$ is defined as in \eqref{eq:C-def} for each $j$.
In the special case of \emph{product weights}, i.e., there is a weight
parameter $\gamma_j$ associated with each coordinate $x_j$, and
$\gamma_\setu \coloneqq \prod_{j\in\setu} \gamma_j$, the embedding constant
(squared) in \eqref{eq:equiv} is precisely
\[
  \prod_{j=1}^d (1 + \gamma_j\, C(\rho_j,\psi_j)),
\]
which is simply the product of the constant in \eqref{eq:equiv1D}, and is
bounded independently of $d$ provided that $\sum_{j=1}^\infty \gamma_j\,
C(\rho_j,\psi_j)<\infty$.

Knowing an explicit and simple formula for the reproducing kernel of
$\calW_d$ (see Theorem~\ref{thm:main} below) allowed the development of
QMC theory in \cite{NK14}, namely, the construction of randomly shifted
lattice rules that achieve the optimal rate of convergence. Note that if
\eqref{eq:strong} holds then, because $\calH_d$ with inner product
corresponding to the norm \eqref{eq:H-norm} is equivalent to $\calW_d$, we
conclude that $\calH_d$ is also a reproducing kernel Hilbert space, but
with a kernel that is unknown as well as likely more complicated, hence
our preference for working with $\calW_d$.

\subsecintro{Implication for smoothing by preintegration applied to option pricing problems}

As discussed earlier in this introduction, there is a gap in the analysis
of QMC methods combined with preintegration. The theory on smoothing by
preintegration from \cite{GKLS18} exists for the space $\calH_d$, whereas
the error analysis of QMC methods giving a root-mean-square (RMS) error
close to $\bigO(1/N)$, where $N$ is the number of function evaluations,
assumes that the integrand belongs to the space $\calW_d$ (see
\cite[Theorem~8]{NK14}). The equivalence of the two spaces $\calH_d$ and
$\calW_d$ bridges this gap, and an important consequence is we can now
show that QMC methods combined with preintegration can achieve a RMS error
close to $\bigO(1/N)$ for some option pricing problems; explicit details
are given in Section~\ref{sec:option}.

\subsecintro{Other embedding and related results}

In a series of related papers
\cite{GnewHefHinRit17,GnewHefHinRitWas17,GnewHefHinRitWas19,
HefRit15,HefRitWas16,HinSchneid16,KritzPillWas17} different combinations
of authors established, in a variety of settings, continuous embeddings between ANOVA
spaces and so-called ``anchored'' spaces.  While all of these papers
considered functions defined on $d$-dimensional spaces, it is easier to
explain the concept in the case $d=1$.  In this case the anchored
equivalent (with anchor at zero) of the squared norms defined in
\eqref{eq:Hnorm1D} and \eqref{eq:Wnorm1D} is
\begin{equation*}
 \|f\|_{\rm{anch}}^2
 \,\coloneqq\, |f(0)|^2
 + \frac{1}{\gamma} \int_{-\infty}^\infty |f'(x)|^2\,\psi(x)\,\rd x.
\end{equation*}
In the present paper we do not consider anchored spaces.
Note also that by continuous embedding 
we mean that the identity mapping from one space into the other
is a bounded linear operator. Often we will simply use the term
\emph{embedding}, which should be understood as a continuous embedding.

The first such paper \cite{HefRit15} studies embeddings of tensor products
of $1$-dimensional Hilbert spaces with product weights $\{\gamma_j\}$. The
setting in one dimension is quite general, but explicit examples cover
only the bounded domain $[0,1]$. It is possible to put our
$1$-dimensional spaces $\calH$ and $\calW$ into the setting of
\cite{HefRit15}, however due to the tensor product structure used there it is not
possible to extend those results to our $d$-dimensional spaces. 
Furthermore, the constants arising from the general theory in \cite{HefRit15}
are not as sharp as those we obtain in \eqref{eq:equiv1D}, see also 
Remark~\ref{rem:embed-constant}.
The paper \cite{HefRitWas16} extends \cite{HefRit15} to Banach spaces
involving $\calL^p$ norms on $[0,1]^d$ with general weights
$\{\gamma_\setu\}$, and provides embedding constants between the ANOVA and
anchored spaces for $p=1$ and $p=\infty$. The paper \cite{HinSchneid16}
extends \cite{HefRitWas16} to general $p\in [1,\infty]$ by interpolation.
The paper \cite{KritzPillWas17} provides lower bounds on the norm of the
embedding operator, again for bounded domains and ANOVA and anchored
spaces. The paper \cite{GnewHefHinRitWas17} extends \cite{HefRit15} to
higher order derivatives and $d=\infty$. The paper
\cite{GnewHefHinRitWas19} considers spaces with increasing smoothness and
$d=\infty$.

Apart from \cite{HefRit15}, the other paper close to the present work is 
\cite{GnewHefHinRitWas17}, in that it deals with general weights and unbounded domain, though with
integrals restricted to $\mathbb{R}_+^d$. The function spaces are also
defined in a different way, through convolutions with integral kernels
rather than derivatives, and $\psi$ is restricted to be equal (in our
notation) to $\rho$. The condition on $\rho$ assumed in that paper can be
stated as
\[
 \bigg\|\frac{1-\Phi}{\rho}\bigg\|_{\calL^\infty(\mathbb{R}_+)}<\infty \qquad\mbox{and}\qquad
 \int_0^\infty x\,\rho(x) \,\rd x <\infty.
\]
It can easily be seen that this condition (when adapted to the whole real
line) is stronger than our condition \eqref{eq:weak} but weaker than
\eqref{eq:strong}. Clearly, embedding theory for Sobolev-type spaces in
high dimensions is an active area of research, however, it would seem not
possible to infer the results in the present paper from the equivalence
results for ANOVA and anchored spaces.

Another possible explanation for the success of QMC for option pricing
was proposed in \cite{HMOU16}, where it was suggested that \emph{Besov} spaces
are more suitable for the analysis of functions with kinks.
However, \cite{HMOU16} deals only with Besov spaces of periodic functions on
the unit cube and only considers products of simple kink functions on $[0, 1]$.
As such, the analysis there does not apply to real-world option pricing problems.

\section{The $1$-dimensional case} \label{sec:1D}

Let $\rho: \R \to \R_+$ be a strictly positive probability density
function defined on $\R$, and let $\psi: \bbR\to \R_+$ be a locally
integrable strictly positive function such that $1/\psi$ is also locally
integrable. For any locally integrable function $f$ on $\bbR$, we define
the $\rho$-weighted integral
\[
  I_\rho(f) \,\coloneqq\, \int_{-\infty}^\infty f(x)\, \rho(x) \,\rd x,
\]
along with the $\calL^2_\rho$ and $\calL^2_\psi$ norms
\[
 \|f\|_{\calL^2_{\rho}}^2 \,\coloneqq\, \int_{-\infty}^\infty |f(x)|^2\,\rho(x) \,\rd x,
\qquad \|f\|_{\calL^2_{\psi}}^2 \coloneqq \int_{-\infty}^\infty |f(x)|^2\,\psi(x) \,\rd x.
\]
The Sobolev space $\calH$ and the unanchored ANOVA space $\calW$ are the
restriction of the set of locally integrable functions on $\bbR$ for which
the norms \eqref{eq:Hnorm1D} and \eqref{eq:Wnorm1D},
respectively, are finite. Equivalently, the norms can be written as
\begin{align}
 \|f\|_\calH^2 &\,=\, \;\|f\|_{\calL^2_\rho}^2\,\, + \frac{1}{\gamma}\, \|f'\|_{\calL^2_\psi}^2, \label{eq:Hnorm1D2} \\
 \|f\|_\calW^2 &\,=\, |I_\rho(f)|^2 + \frac{1}{\gamma}\, \|f'\|_{\calL^2_\psi}^2. \label{eq:Wnorm1D2}
\end{align}
Thus for $f\in\calH$ we have $f\in \calL^2_\rho$ and $f'\in\calL^2_\psi$,
and the norm in $\calH$ is a weighted $\calL^2$-norm involving first
derivatives, but with differentiated functions weighted differently from
undifferentiated functions. On the other hand, for $f\in\calW$ we have
$f\in \calL^1_\rho$ and $f'\in\calL^2_\psi$. 
It is well-known that $\calH$ is complete, see, e.g., \cite[Section~1.1.12]{Mazya11}.
Since we could not find a proof that $\calW$ is complete in the literature, we provide
one in the following lemma.

\begin{lemma}
\label{lem:W_complete}
If the condition \eqref{eq:weak} holds, then the space $\calW$ is complete.
\end{lemma}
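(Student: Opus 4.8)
The plan is to show that a Cauchy sequence $(f_n)$ in $\calW$ converges in $\calW$ to some limit $f\in\calW$. The norm \eqref{eq:Wnorm1D2} splits into the scalar part $|I_\rho(f_n)|$ and the seminorm part $\gamma^{-1/2}\|f_n'\|_{\calL^2_\psi}$, so Cauchyness in $\calW$ gives immediately that $(I_\rho(f_n))$ is Cauchy in $\bbR$ (hence converges to some number $L$) and that $(f_n')$ is Cauchy in $\calL^2_\psi$ (hence converges to some $g\in\calL^2_\psi$, since $\calL^2_\psi$ is a weighted $L^2$-space and therefore complete). The work is to build the actual limit function $f$ out of $g$ and $L$, show $f'=g$ in the weak sense, show $I_\rho(f)=L$, and show $f_n\to f$ in $\calW$.

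First I would fix a convenient base point, say $x=0$, and define $\tilde f_n(x) \coloneqq f_n(x) - f_n(0)$, which is well-defined after adjusting $f_n$ on a null set so it is absolutely continuous (a locally integrable function with locally integrable weak derivative has an absolutely continuous representative). Then $\tilde f_n(x) = \int_0^x f_n'(t)\,\rd t$. The key estimate is that the pointwise difference $\tilde f_n(x) - \tilde f_m(x) = \int_0^x (f_n'-f_m')(t)\,\rd t$ can be bounded, via Cauchy--Schwarz against the weight, by
\[
 |\tilde f_n(x) - \tilde f_m(x)| \;\le\; \|f_n' - f_m'\|_{\calL^2_\psi} \,\bigg(\int_0^x \frac{\rd t}{\psi(t)}\bigg)^{1/2},
\]
with the analogous bound for $x<0$; since $1/\psi$ is locally integrable the bracketed factor is finite for each fixed $x$ and bounded on compacta, so $(\tilde f_n)$ is Cauchy uniformly on every bounded interval, converging locally uniformly to $\tilde f(x) \coloneqq \int_0^x g(t)\,\rd t$, which is absolutely continuous with weak derivative $g$. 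To recover the correct additive constant I would use the $I_\rho$ information: set $f \coloneqq \tilde f + c$ where the constant $c$ is chosen so that $I_\rho(f) = L$. For this to make sense I must first check $\tilde f \in \calL^1_\rho$, i.e. that $I_\rho(\tilde f)$ is well defined and finite; this is exactly where the condition \eqref{eq:weak} enters — it is the hypothesis that controls $\int (\Phi/\psi)$ and $\int((1-\Phi)/\psi)$, which, after an integration-by-parts / Fubini argument writing $\tilde f(x) = \int_0^x g$, bounds $\int_{\bbR} |\tilde f(x)|\,\rho(x)\,\rd x$ in terms of $\|g\|_{\calL^2_\psi}$ and these weighted integrals. (I expect a bound of the rough shape $|I_\rho(\tilde f)| \lesssim \|g\|_{\calL^2_\psi}\, \sqrt{C}$ with $C$ built from the integrals in \eqref{eq:weak}.) The same Fubini/integration-by-parts computation, applied to $\tilde f_n - \tilde f$, shows $I_\rho(\tilde f_n) \to I_\rho(\tilde f)$; combined with $I_\rho(f_n) \to L$ this pins down $c = L - \lim_n I_\rho(\tilde f_n)$ and gives $I_\rho(f) = L$.

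It then remains to verify convergence in the $\calW$-norm. We have $\|f_n - f\|_\calW^2 = |I_\rho(f_n - f)|^2 + \gamma^{-1}\|f_n' - g\|_{\calL^2_\psi}^2$; the second term $\to 0$ by construction of $g$, and the first term $\to 0$ by the convergence $I_\rho(f_n)\to L = I_\rho(f)$ just established. Finally $f\in\calW$ because $f' = g \in \calL^2_\psi$ and $I_\rho(f) = L$ is finite, so $\|f\|_\calW < \infty$. The main obstacle is the middle step: making the recovery of the limit function rigorous and, above all, justifying that $\tilde f \in \calL^1_\rho$ and that $I_\rho$ is continuous along the sequence — this is the only place the weaker condition \eqref{eq:weak} is genuinely needed, and the integration-by-parts manipulation (handling the boundary terms at $\pm\infty$, which vanish precisely because of the decay encoded in $\Phi/\psi$ and $(1-\Phi)/\psi$ being integrable) requires some care. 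Everything else is soft functional analysis: completeness of $\calL^2_\psi$, the absolutely-continuous representative, and locally-uniform convergence from local integrability of $1/\psi$.
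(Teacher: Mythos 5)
Your proposal is correct and follows essentially the same route as the paper's proof: extract the limits $L$ of $I_\rho(f_k)$ and $g$ of $f_k'$ in $\calL^2_\psi$, reconstruct the limit as an antiderivative of $g$ plus a constant fixed by the $I_\rho$ data, use Fubini and Cauchy--Schwarz together with condition \eqref{eq:weak} to show the $\rho$-weighted integral of that antiderivative is finite, and then read off $\calW$-convergence directly from the norm. The only difference is cosmetic (base point $0$ versus a general $a$, and your extra step showing $I_\rho(\tilde f_n)\to I_\rho(\tilde f)$ is not needed once the constant is chosen so that $I_\rho(f)=L$).
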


\begin{proof}
Let $\{f_k\}_{k = 1}^\infty$ be a Cauchy sequence in $\calW$. Since, for any $k, \ell \in \N$,
\[
|I_\rho(f_k) - I_\rho(f_\ell)| \,\leq\, \|f_k - f_\ell\|_\calW
\quad \text{and} \quad
\|f_k' - f_\ell'\|_{\calL_\psi^2} \,\leq\, \sqrt{\gamma}\,\|f_k - f_\ell\|_\calW\,,
\]
it follows that $\{ I_\rho(f_k)\}$ is a Cauchy sequence in $\R$
and $\{f_k'\}$ is a Cauchy sequence in $\calL_\psi^2(\R)$.
We denote the respective limits by 
\begin{equation}
\label{eq:lims_W_complete}
\lim_{k \to \infty} I_\rho(f_k) \,\eqqcolon\, I \in \R
\quad \text{and} \quad
\lim_{k \to \infty} f_k' \,\eqqcolon\,  g \in \calL_\psi^2(\R).
\end{equation}

We prove that $f_k$ converges to some $f \in \calW$. Let $a \in \R$ and define $f : \R \to \R$ by
\[
f(x) \,\coloneqq\, \int_a^x g(t) \, \rd t + B\,,
\quad \text{with} \quad
B \,\coloneqq\, I - \int_{-\infty}^\infty \bigg(\int_a^x g(t) \, \rd t\bigg) \rho(x) \, \rd x\,,
\]
implying $I_\rho(f) = I$ because $B$ is finite, as we will now show.
To that end, consider
\begin{align*}
&\int_{-\infty}^\infty \bigg(\int_a^x g(t) \, \rd t\bigg) \rho(x) \, \rd x
\\
&= \int_{-\infty}^a \bigg(- \int_x^a g(t) \, \rd t \bigg) \rho(x) \, \rd x
+ \int_a^\infty \bigg(\int_a^x g(t) \, \rd t\bigg) \rho(x) \, \rd x,
\end{align*}
where we have split the outer integral based on whether $x \leq a$ or $x > a$
and swapped the limits for the inner integral in the first term.
Taking the absolute value then using the triangle inequality and Fubini's Theorem
we obtain
\begin{align*}
&\bigg|\int_{-\infty}^\infty \bigg(\int_a^x g(t)\, \rd t\bigg) \rho(x) \, \rd x\bigg|
\\
&\leq\, 
\bigg| \int_{-\infty}^a g(t) \bigg(\int_{-\infty}^t \rho(x) \, \rd x \bigg) \rd t \bigg|
+ \bigg| \int_a^\infty g(t) \bigg(\int_t^\infty \rho(x) \, \rd x\bigg) \rd t\bigg| 
\\
&=\, \bigg| \int_{-\infty}^a g(t) \Phi(t) \, \rd t\bigg| 
+ \bigg| \int_a^\infty g(t) \big(1 - \Phi(t)\big) \, \rd t \bigg| 
\\
&\leq\, \bigg( \int_{-\infty}^a \frac{[\Phi(t)]^2}{\psi(t)} \, \rd t\bigg)^{1/2}
\bigg( \int_{-\infty}^a |g(t)|^2 \psi(t) \, \rd t\bigg)^{1/2}
\\
&\qquad+ \bigg(\int_a^\infty \frac{[1 - \Phi(t)]^2}{\psi(t)} \, \rd t\bigg)^{1/2}
\bigg( \int_a^\infty |g(t)|^2 \psi(t) \, \rd t\bigg)^{1/2}
\\
&\leq\, \|g\|_{\calL_\psi^2} \Bigg[
\bigg( \int_{-\infty}^a \frac{[\Phi(t)]^2}{\psi(t)} \, \rd t\bigg)^{1/2}
+ \bigg(\int_a^\infty \frac{[1 - \Phi(t)]^2}{\psi(t)} \, \rd t\bigg)^{1/2}\Bigg]
\,<\, \infty,
\end{align*}
the finiteness following because $g \in \calL_\psi^2(\R)$ and by assumption \eqref{eq:weak}.
Hence, $|B|<\infty$.

Next we show $f \in \calW$.
Since $g \in \calL_\psi^2(\R)$, it follows that $g$ is also locally integrable.
Indeed, let $\Omega \subset \R$ be compact then because
$1/\psi$ is locally integrable
\begin{align*}
\int_\Omega g(t) \, \rd t 
\,&\leq\,
\bigg(\int_\Omega |g(t)|^2 \psi(t) \, \rd t\bigg)^{1/2}
\bigg( \int_\Omega \frac{1}{\psi(t)} \, \rd t\bigg)^{1/2}
\\
\,&\leq\, \|g\|_{\calL_\psi^2} \bigg( \int_\Omega \frac{1}{\psi(t)} \, \rd t\bigg)^{1/2}
\,<\, \infty.
\end{align*}

Since $g$ is locally integrable, it follows that $f$ is absolutely continuous,
and by the Fundamental Theorem of Calculus the classical derivative of $f$
is equal to $g$ almost everywhere.
Since the classical derivative exists, the weak derivative exists as well 
and is equal to the classical derivative almost everywhere.
This implies that $f' = g$ almost everywhere in $\R$.
Then using \cite[Sec.~1.1.2, Theorem]{Mazya11}, $f'$ locally integrable
implies that $f$ is also locally integrable.

Hence, $f \in \calW$ since
\[
\|f\|_\calW^2 \,=\, |I_\rho(f)|^2 + \|f'\|_{\calL_\psi^2}^2
\,=\, |I|^2 + \|g\|_{\calL_\psi^2}^2 \,<\, \infty.
\]
Finally, by construction $f_k \to f$ in $\calW$ because \eqref{eq:lims_W_complete}
implies
\[
\|f - f_k \|_\calW^2 \,=\, |I_\rho(f - f_k)|^2 + \|f' - f_k'\|_{\calL_\psi^2}^2
\,=\, |I - I_\rho(f_k)|^2 + \|g - f_k'\|_{\calL_\psi^2}^2
\,\to\, 0,
\]
as $k \to \infty$. Thus, every Cauchy sequence in $\calW$ converges 
and so $\calW$ is complete.
\end{proof}

It follows from the definition of the norms that the functions in $\calH$
and $\calW$ are continuous and absolutely continuous. The spaces $\calH$
and $\calW$ are both Hilbert spaces, with the respective inner products
\begin{align}
\langle f, \widetilde{f} \rangle_{\calH} \,\coloneqq\,
 &\int_{-\infty}^\infty f(x)\, \widetilde{f}(x)\,\rho(x)\,\rd x
 + \frac{1}{\gamma}\int_{-\infty}^\infty f'(x)\, \widetilde{f}\,'(x)\,\psi(x) \,\rd x,
 \label{eq:inner1DH}
 \\
  \label{eq:inner1DW}
 \langle f, \widetilde{f} \rangle_{\calW} \,\coloneqq\,
 &\bigg(\int_{-\infty}^\infty f(x)\,\rho(x)\,\rd x \bigg)\bigg(\int_{-\infty}^\infty \widetilde{f}(x)\,\rho(x)\,\rd x \bigg)
 \\\nonumber
 &+ \frac{1}{\gamma}\int_{-\infty}^\infty f'(x)\,\widetilde{f}\,'(x)\,\psi(x) \,\rd x.
\end{align}

The question to be addressed is whether the spaces are identical; a
question whose answer is not obvious even in this $1$-dimensional case. In
one direction, it is immediately clear that $\calH$ is a subset of
$\calW$, in that by the Cauchy--Schwarz inequality we have
\begin{align*}
  \big|I_\rho(f)\big|^2
  &= \bigg| \int_{-\infty}^\infty \! f(x)\,\rho(x)\,\rd x \bigg|^2
  \le \bigg(\int_{-\infty}^\infty \! |f(x)|^2\, \rho(x)\,\rd x\bigg)
  \bigg(\int_{-\infty}^\infty \!\rho(x)\,\rd x\bigg)
  = \|f\|_{\calL^2_\rho}^2.
\end{align*}
We conclude easily that $\|f\|_\calW^2 \le \|f\|_\calH^2$, and therefore
the space $\calH$ is embedded in~$\calW$. The central purpose of this
section is to prove that embedding holds in the opposite direction under
an appropriate condition, see \eqref{eq:strong}.

The parameter $\gamma>0$ in \eqref{eq:Hnorm1D}--\eqref{eq:Wnorm1D} and
\eqref{eq:Hnorm1D2}--\eqref{eq:inner1DW} is a weight parameter that
controls the contribution of $\|f'\|_{\calL^2_\psi}^2$ relative to
$\|f\|_{\calL^2_\rho}^2$ or $(I_\rho(f))^2$ for functions in the unit ball
of $\calH$ or $\calW$. A small $\gamma$ means that
$\|f'\|_{\calL^2_\psi}^2$ must be small. In the limiting case where
$\gamma = 0$, we assume that $f'\equiv 0$ so the spaces contain constant
functions. The weight parameter $\gamma$ does not play an essential role
in the $1$-dimensional setting but it will become significant later when
we extend the setting to higher dimensions.

Let $\Phi$ denote the distribution function corresponding to the density
$\rho$, see \eqref{eq:cdf}. In Subsection~\ref{sec:kernel1D} we will show
that $\calW$ is a RKHS if the pair of weight functions $(\rho,\psi)$
satisfies the weaker condition \eqref{eq:weak}. In
Subsection~\ref{sec:embed1D}, we will show that $\calW$ is embedded in
$\calL^2_\rho$ if the pair $(\rho,\psi)$ satisfies the stronger condition
\eqref{eq:strong}. In turn we will establish the norm equivalence between
$\calH$ and~$\calW$.

\subsection{The reproducing kernel for $\calW$ exists under the weaker condition \eqref{eq:weak}} \label{sec:kernel1D}

Before we proceed to find the reproducing kernel for $\calW$, we give a
couple of remarks on the conditions \eqref{eq:weak}, \eqref{eq:strong}
and~\eqref{eq:C-def}.

\begin{remark}
Since $0 \leq\Phi(t) \leq 1$, the condition \eqref{eq:strong} is clearly
stronger than the condition \eqref{eq:weak}. We now verify that
\eqref{eq:strong} and \eqref{eq:C-def} are equivalent. For any finite $c$
\begin{align*}
 \int_{-\infty}^\infty \frac{\Phi(t)\,(1 - \Phi(t))}{\psi(t)} \,\rd t
 &\,=\, \int_{-\infty}^c \frac{\Phi(t)\,(1 - \Phi(t))}{\psi(t)} \,\rd t
 + \int_c^\infty \frac{\Phi(t)\,(1 - \Phi(t))}{\psi(t)} \,\rd t \\
 &\,\le\, \int_{-\infty}^c \frac{\Phi(t)}{\psi(t)} \,\rd t
 + \int_c^\infty \frac{1 - \Phi(t)}{\psi(t)} \,\rd t,
\end{align*}
which shows that \eqref{eq:strong} implies \eqref{eq:C-def}. Since $\Phi$
is the cumulative distribution function of the probability density $\rho$,
there exists some $a \leq c$ such that $\Phi(a) < 1$. Using
\begin{align*}
  \Phi(x) \,=\, \frac{\Phi(x)\,(1-\Phi(x))}{1-\Phi(x)}
  \,\le\, \frac{\Phi(x)\,(1-\Phi(x))}{1-\Phi(a)}
  \qquad\mbox{for all}\quad x \in (-\infty,a],
\end{align*}
we obtain
\begin{align*}
\int_{-\infty}^c \frac{\Phi(x)}{\psi(x)} \,\rd x
&\,=\, \int_{-\infty}^a \frac{\Phi(x)}{\psi(x)} \,\rd x + \int_a^c \frac{\Phi(x)}{\psi(x)} \,\rd x\\
&\,\le\, \frac{1}{1 - \Phi(a)} \int_{-\infty}^c \frac{\Phi(x)\,(1 - \Phi(x))}{\psi(x)} \,\rd x
+ \int_a^c \frac{1}{\psi(t)} \, \rd x,
\end{align*}
where the first term is finite if \eqref{eq:C-def} holds and the second is
finite because $1/\psi$ is locally integrable. Finiteness of the second
integral in \eqref{eq:strong} can be shown in a similar way. This shows
that \eqref{eq:C-def} implies \eqref{eq:strong} and hence they are
equivalent.
\end{remark}

\begin{remark}
The condition \eqref{eq:strong}, while not very restrictive, is
nevertheless not trivial. In the important case in which $\rho$ is the
standard normal probability density
$\rho(x)\coloneqq\tfrac{1}{\sqrt{2\pi}}\exp(-\tfrac{1}{2}x^2)$, the choice
$\psi(x) = \rho(x)$ satisfies \eqref{eq:weak} but does not satisfy
\eqref{eq:strong}. This can be seen from the bounds
\[
  \tfrac{1}{t}\big(1-\tfrac{1}{t^2}\big)\,\exp\big(-\tfrac{1}{2}t^2\big)
  \,\le\, \int_t^\infty \exp\big(-\tfrac{1}{2}x^2\big)\,\rd x
  \,\le\, \tfrac{1}{t} \exp\big(-\tfrac{1}{2}t^2\big)
  \quad\forall\, t>0,
\]
which are obtained using asymptotic expansions from, e.g.,
\cite{AbrSteg70}. Indeed, if $\psi(x) = \exp(-\tfrac{1}{2\alpha}x^2)$ then
\eqref{eq:weak} holds if and only if $\alpha \ge 1/2$, but
\eqref{eq:strong} holds if and only if $\alpha> 1$.
\end{remark}

We first establish the following technical lemma. This result was used
implicitly in \cite{NK14} without proof.

\begin{lemma} \label{lem:lim1D}
Let $f:\bbR\to\bbR$ be locally integrable, with $f\in \calL^1_\rho$ and
$f'\in\calL^2_\psi$. If the condition~\eqref{eq:weak} holds, then
\begin{align}
 \label{eq:lim1Da}
 f(x)\, \Phi(x) \,\to\, 0 &\quad\text{as}\quad x \,\to\, -\infty, \qquad\text{and}\\
\label{eq:lim1Db}
 f(x)\, (1 - \Phi(x)) \,\to\, 0 &\quad\text{as}\quad x \,\to\, + \infty.
\end{align}
\end{lemma}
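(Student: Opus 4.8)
The plan is to establish the two limits by showing that $f(x)\Phi(x)$ has a finite limit as $x \to -\infty$ (and similarly $f(x)(1-\Phi(x))$ has a finite limit as $x \to +\infty$), and then to argue that these limits must be zero because $f \in \calL^1_\rho$ would otherwise be contradicted. First I would fix a reference point $a \in \R$ and, since $f$ is absolutely continuous (being locally integrable with $f' \in \calL^2_\psi$ locally integrable, as in the proof of Lemma~\ref{lem:W_complete}), write $f(x) = f(a) + \int_a^x f'(t)\,\rd t$ for all $x$. For $x < a$ this gives $f(x) = f(a) - \int_x^a f'(t)\,\rd t$, so
\[
 f(x)\,\Phi(x) \,=\, f(a)\,\Phi(x) \,-\, \Phi(x)\int_x^a f'(t)\,\rd t.
\]
Since $\Phi(x) \to 0$ as $x \to -\infty$, the first term vanishes. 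For the second term I would use the Cauchy--Schwarz inequality exactly as in the completeness proof: for $x < a$,
\[
 \Phi(x)\,\bigg|\int_x^a f'(t)\,\rd t\bigg|
 \,\le\, \Phi(x)\,\|f'\|_{\calL^2_\psi}\bigg(\int_x^a \frac{\rd t}{\psi(t)}\bigg)^{1/2},
\]
but this crude bound will not obviously go to zero, so instead I would estimate more carefully by splitting: the key observation is that $\Phi$ is increasing, so $\Phi(x) \le \Phi(t)$ for $x \le t \le a$, giving
\[
 \Phi(x)\,\bigg|\int_x^a f'(t)\,\rd t\bigg|
 \,\le\, \int_{-\infty}^a |f'(t)|\,\Phi(t)\,\rd t
 \,\le\, \|f'\|_{\calL^2_\psi}\bigg(\int_{-\infty}^a \frac{(\Phi(t))^2}{\psi(t)}\,\rd t\bigg)^{1/2} \,<\, \infty
\]
by the weaker condition~\eqref{eq:weak}. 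Thus $\lim_{x\to-\infty} f(x)\Phi(x)$ exists (it equals $-\int_{-\infty}^a f'(t)\,\Phi(t)\,\rd t$ after dominated convergence, but existence of a finite limit $L$ is all we need so far).

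It remains to show $L = 0$. Suppose $L \neq 0$; then $|f(x)| \ge \frac{|L|}{2\,\Phi(x)}$ for all $x$ sufficiently negative, say $x \le b$. Hence
\[
 \int_{-\infty}^b |f(x)|\,\rho(x)\,\rd x
 \,\ge\, \frac{|L|}{2}\int_{-\infty}^b \frac{\rho(x)}{\Phi(x)}\,\rd x
 \,=\, \frac{|L|}{2}\,\big[\ln \Phi(x)\big]_{-\infty}^{b}
 \,=\, +\infty,
\]
since $\Phi(x) \to 0$ forces $\ln\Phi(x) \to -\infty$. This contradicts $f \in \calL^1_\rho$, so $L = 0$, which is~\eqref{eq:lim1Da}. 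The statement~\eqref{eq:lim1Db} follows by an entirely symmetric argument, using $f(x) = f(a) + \int_a^x f'(t)\,\rd t$ for $x > a$, the monotonicity bound $1 - \Phi(t) \le 1 - \Phi(x)$ for $a \le t \le x$, the second half of~\eqref{eq:weak}, and the divergence of $\int_b^\infty \rho(x)/(1-\Phi(x))\,\rd x = \big[-\ln(1-\Phi(x))\big]_b^\infty = +\infty$.

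The main obstacle is the middle step: obtaining that $f(x)\Phi(x)$ actually \emph{converges} rather than merely stays bounded. The naive triangle-inequality bound on $\int_x^a f'(t)\,\rd t$ does not decay, so the essential trick is to move the factor $\Phi(x)$ inside the integral and exploit monotonicity of $\Phi$ to replace it by $\Phi(t)$, which is precisely the weight appearing in~\eqref{eq:weak} and makes the tail integral $\int_{-\infty}^a |f'(t)|\Phi(t)\,\rd t$ finite; the Cauchy tail of this convergent integral then gives a genuine limit. Everything else is routine: absolute continuity of $f$, Cauchy--Schwarz against $\psi$, and the logarithmic divergence that converts a nonzero limit into a contradiction with $\calL^1_\rho$-integrability.
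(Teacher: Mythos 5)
Your two-stage plan (show $f(x)\Phi(x)$ has a limit $L$ as $x\to-\infty$, then kill $L\neq0$ via the logarithmic divergence of $\int_{-\infty}^b\rho/\Phi$) is workable, and your second stage is exactly the paper's final contradiction. The gap is in the first stage: nothing you wrote actually proves that the limit exists. The displayed estimate
\begin{equation*}
 \Phi(x)\,\bigg|\int_x^a f'(t)\,\rd t\bigg| \,\le\, \int_{-\infty}^a |f'(t)|\,\Phi(t)\,\rd t \,<\,\infty
\end{equation*}
only gives boundedness, and the advertised ``Cauchy tail'' argument does not close: for $x_1<x_2<a$ one has
\begin{equation*}
 \Phi(x_1)\!\int_{x_1}^a\! f' - \Phi(x_2)\!\int_{x_2}^a\! f'
 \,=\, \Phi(x_1)\!\int_{x_1}^{x_2}\! f' \,+\, \big(\Phi(x_1)-\Phi(x_2)\big)\!\int_{x_2}^a\! f',
\end{equation*}
and while the first term is controlled by the tail $\int_{-\infty}^{x_2}|f'|\Phi$, the second is only bounded by the \emph{full} integral $\int_{-\infty}^a|f'|\Phi$ (the factor $\int_{x_2}^a f'$ may grow as $x_2\to-\infty$, since $f'$ is merely in $\calL^2_\psi$). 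Moreover your parenthetical identification of the limit as $-\int_{-\infty}^a f'(t)\Phi(t)\,\rd t$ is incorrect: replacing $\Phi(x)$ by $\Phi(t)$ inside the integral is legitimate only as an inequality for absolute values, not in the limit. The step can be repaired: dominate $f'(t)\,\Phi(x)\,\bbone_{[x,a]}(t)$ by $|f'(t)|\,\Phi(t)$, which is integrable on $(-\infty,a]$ by Cauchy--Schwarz and the first half of \eqref{eq:weak}, and apply dominated convergence (equivalently, split at a fixed cut $b$: the far part is small by the convergent tail, and $\Phi(x)\to0$ kills the fixed part $\int_b^a f'$). But then the limit comes out as $0$, i.e.\ $f(x)\Phi(x)\to0$ directly, and your entire second stage becomes redundant.

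Once repaired in this way, your route is genuinely different from the paper's, and arguably more direct: the paper never establishes that the limit exists; it argues by contradiction along a sequence $t_m\to-\infty$ with $|f(t_m)|\Phi(t_m)\ge\delta$, using integrability of $(f\Phi)'=f'\Phi+f\rho$ on $(-\infty,c]$ (which needs both \eqref{eq:weak} and $f\in\calL^1_\rho$) together with the Fundamental Theorem of Calculus to propagate the lower bound, and only then invokes the divergence of $\int\rho/\Phi$. The corrected version of your argument handles \eqref{eq:lim1Da} using only absolute continuity of $f$, $f'\in\calL^2_\psi$, the first half of \eqref{eq:weak} and $\Phi(x)\to0$, with no appeal to $f\in\calL^1_\rho$ and no contradiction. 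As submitted, however, the existence-of-limit step is asserted rather than proved, and the stated value of the limit is wrong, so the proof is not complete.
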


\begin{proof}
We prove the first limit \eqref{eq:lim1Da} by contradiction; the proof for
the second limit \eqref{eq:lim1Db} follows analogously.

To simplify the notation, we define $F(x) \coloneqq f(x)\,\Phi(x)$ so that
we must now show that $F(x) \to 0$ as $x \to -\infty$. Note $F$ is
absolutely continuous because both $f$ and $\Phi$ are absolutely
continuous, and since $\Phi$ is the distribution function of $\rho$, the
derivative of $F$ is given by
\[
 F'(x) \,=\,  f'(x)\,\Phi(x) + f(x)\,\rho(x).
\]

We first show that $F'$ is integrable on $(-\infty, c]$ for any finite
$c\in\bbR$. By the triangle inequality and the Cauchy--Schwarz inequality,
we can bound the integral by
\begin{align*}
 \int_{-\infty}^c &|F'(x)|\,\rd x
 \,\le\, \int_{-\infty}^c |f'(x)|\,\Phi(x) \,\rd x
 \,+\, \int_{-\infty}^c |f(x)|\,\rho(x) \,\rd x \\
 &\,\le\, \bigg(\int_{-\infty}^c |f'(x)|^2\, \psi(x)\,\rd x \bigg)^{1/2}
 \bigg( \int_{-\infty}^c \frac{(\Phi(x))^2}{\psi(x)} \,\rd x\bigg)^{1/2}
 + \int_{-\infty}^c |f(x)|\,\rho(x) \,\rd x \\
 &\,\leq \|f'\|_{\calL^2_\psi}
 \bigg( \int_{-\infty}^c \frac{(\Phi(x))^2}{\psi(x)} \,\rd x\bigg)^{1/2} + \|f\|_{\calL^1_\rho}
 \,<\, \infty,
\end{align*}
where the finiteness follows from the assumptions $f\in \calL^1_\rho$,
$f'\in \calL^2_\psi$ and \eqref{eq:weak}.
Hence, $F'$ is integrable on the
interval $(-\infty, c]$.

For a contradiction, suppose that \eqref{eq:lim1Da} does not hold, in
which case there exists $\delta > 0$, $M \in \N$ and a sequence $\{t_m\}$
with $t_m \to -\infty$ as $m \to \infty$, such that $t_m\le c$ and
$|F(t_m)|= |f(t_m)|\,\Phi(t_m) \geq \delta$ for all $m \geq M$.

For any $m \geq M$, let $x \in (-\infty, t_m)$ be arbitrary. Then by the
Fundamental Theorem of Calculus we can write
\[
F(x) \,=\, F(t_m) - \int_x^{t_m} F'(t) \,\rd t.
\]
By the reverse triangle inequality, we can bound this from below as
follows
\begin{align*}
|F(x)| \,&\geq\, |F(t_m)| - \bigg|\int_x^{t_m} F'(t) \,\rd t\bigg|
\,\geq\, \delta - \int_{x}^{t_m} |F'(t)| \,\rd t
\,\geq\, \delta - \int_{-\infty}^{t_m} |F'(t)| \,\rd t.
\end{align*}
Since $F'$ is integrable on $(-\infty, c]$, we now can choose $ m\ge M$
large enough such that
\[
\int_{-\infty}^{t_m} |F'(t)|\,\rd t \,\leq\, \frac{\delta}{2},
\]
in which case we have the bound
\[
|F(x)| \,\geq\, \frac{\delta}{2}
\quad \text{for all } x \le t_m.
\]
This in turn implies that
\[
|f(x)| \,\geq\, \frac{\delta}{2\Phi(x)}
\quad \text{for all } x \le t_m.
\]
Thus
\begin{align*}
 \|f\|_{\calL^1_\rho}
 &\,\geq\, \int_{-\infty}^{t_m} |f(x)|\,\rho(x)\,\rd x
 \,\geq\, \frac{\delta}{2}  \int_{-\infty}^{t_m} \frac{\rho(x)}{\Phi(x)} \,\rd x
 \,=\, \frac{\delta}{2}  \int_{-\infty}^{t_m} \dd{}{}{x} \big[\log (\Phi(x))\big] \,\rd x.
\end{align*}
The last integral is divergent, as can be seen by
\begin{align} \label{eq:div1D}
\nonumber
\int_{-R}^{t_m} \dd{}{}{x} \big[\log (\Phi(x))\big] \,\rd x \,&=\, \log(\Phi(t_m)) - \log(\Phi(-R))\\
\,&=\,  \log(\Phi(t_m)) + \log\Big(\frac{1}{\Phi(-R)}\Big)
\to \infty \quad \text{as } R \to \infty,
\end{align}
because $\Phi(-R) \to 0$ as $R \to \infty$. This contradicts the fact that
$f \in \calL^1_\rho$ and so the result \eqref{eq:lim1Da} must hold.
\end{proof}

We now arrive at the main result of this subsection, which proves that the
reproducing kernel exists for $\calW$ under the weaker condition
\eqref{eq:weak}. This result seems not previously known because in
\cite{NK14} the stronger condition \eqref{eq:strong} was assumed from the
outset.

\begin{theorem}
If the condition \eqref{eq:weak} holds, then $\calW$ is a reproducing
kernel Hilbert space with kernel
\begin{equation} \label{eq:ker1D}
K(x, y) \,\coloneqq\, 1 + \gamma\, \eta(x, y),
\end{equation}
where
\begin{align} \label{eq:eta1Da}
 \eta(x, y)
 \,\coloneqq\,&\int_{-\infty}^{\min(x, y)} \frac{(\Phi(t))^2}{\psi(t)} \,\rd t
 + \int_{\max(x, y)}^\infty \frac{(1 - \Phi(t))^2}{\psi(t)} \,\rd t
\\\nonumber
 &- \int_{\min(x,y)}^{\max(x,y)} \frac{\Phi(t)\,(1 - \Phi(t))}{\psi(t)} \,\rd t.
\end{align}
\end{theorem}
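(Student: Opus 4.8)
The plan is to verify directly that $\calW$ equipped with the inner product \eqref{eq:inner1DW} is a Hilbert space of functions on $\R$ for which pointwise evaluation is bounded, and that $K$ as defined in \eqref{eq:ker1D}--\eqref{eq:eta1Da} is its reproducing kernel. Since Lemma~\ref{lem:W_complete} already gives completeness under \eqref{eq:weak}, and $\calW$ is a genuine inner product space (the form \eqref{eq:inner1DW} is symmetric, bilinear, and positive definite because $\|f\|_\calW=0$ forces $f'=0$ a.e.\ and $I_\rho(f)=0$, hence $f\equiv 0$), the remaining tasks are: (i) show that $K(\cdot,y)\in\calW$ for each fixed $y$; and (ii) verify the reproducing property $\langle f, K(\cdot,y)\rangle_\calW = f(y)$ for all $f\in\calW$ and all $y\in\R$. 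Boundedness of point evaluation then follows automatically from Cauchy--Schwarz once (i) and (ii) hold.

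First I would establish (i). Fixing $y$, I compute $\partial_x \eta(x,y)$ using the Leibniz rule: for $x<y$ one gets $\partial_x\eta(x,y) = (\Phi(x))^2/\psi(x) - \Phi(x)(1-\Phi(x))/\psi(x) = \Phi(x)(2\Phi(x)-1)/\psi(x)$, wait---more carefully, differentiating the three terms in \eqref{eq:eta1Da} with $\min(x,y)=x$, $\max(x,y)=y$ gives $\tfrac{(\Phi(x))^2}{\psi(x)} - \big(-\tfrac{\Phi(x)(1-\Phi(x))}{\psi(x)}\big)$, which simplifies to $\tfrac{\Phi(x)}{\psi(x)}$; similarly for $x>y$ one obtains $-\tfrac{1-\Phi(x)}{\psi(x)}$. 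So $K(\cdot,y)$ has weak derivative $\gamma\,\partial_x\eta(\cdot,y)$, and one checks $\|\partial_x K(\cdot,y)\|_{\calL^2_\psi}^2 = \gamma^2\big(\int_{-\infty}^y \tfrac{(\Phi)^2}{\psi} + \int_y^\infty \tfrac{(1-\Phi)^2}{\psi}\big)$, finite by \eqref{eq:weak}; and $I_\rho(K(\cdot,y))$ is finite since $\eta(\cdot,y)$ is bounded (again by \eqref{eq:weak}, the tail integrals being finite and the middle integral over a compact set). In fact I would aim to show $I_\rho(K(\cdot,y)) = 1$, i.e.\ $I_\rho(\eta(\cdot,y)) = 0$; this is the natural normalization and can be verified by Fubini after writing each of the three integrals in \eqref{eq:eta1Da} as a double integral against $\rho$.

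Next comes (ii), the reproducing property, which I expect to be the main obstacle. Writing $\langle f, K(\cdot,y)\rangle_\calW = I_\rho(f)\,I_\rho(K(\cdot,y)) + \tfrac{1}{\gamma}\int_{-\infty}^\infty f'(x)\,\partial_x K(x,y)\,\psi(x)\,\rd x$, and using $I_\rho(K(\cdot,y))=1$ and the formula for $\partial_x K$ above, the derivative term becomes $\int_{-\infty}^y f'(x)\,\Phi(x)\,\rd x - \int_y^\infty f'(x)\,(1-\Phi(x))\,\rd x$. Now I would integrate by parts on each half-line. On $(-\infty,y]$: $\int_{-\infty}^y f'(x)\Phi(x)\,\rd x = [f(x)\Phi(x)]_{-\infty}^y - \int_{-\infty}^y f(x)\rho(x)\,\rd x = f(y)\Phi(y) - \int_{-\infty}^y f(x)\rho(x)\,\rd x$, where the boundary term at $-\infty$ vanishes by Lemma~\ref{lem:lim1D}, equation \eqref{eq:lim1Da}. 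Similarly on $[y,\infty)$, using \eqref{eq:lim1Db}, $\int_y^\infty f'(x)(1-\Phi(x))\,\rd x = -f(y)(1-\Phi(y)) + \int_y^\infty f(x)\rho(x)\,\rd x$. Subtracting, the derivative term equals $f(y)\Phi(y) + f(y)(1-\Phi(y)) - \int_{-\infty}^\infty f(x)\rho(x)\,\rd x = f(y) - I_\rho(f)$. Adding back $I_\rho(f)$ from the first term gives exactly $f(y)$, as required. The delicate points---and why Lemma~\ref{lem:lim1D} is indispensable here---are the vanishing of the boundary terms at $\pm\infty$ and the justification that the integration by parts is valid for merely absolutely continuous $f$; both are handled by the local integrability of $f'$ against $\psi^{-1/2}$ and the finiteness estimates established in the proof of Lemma~\ref{lem:lim1D}. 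Finally, uniqueness of the kernel is the standard consequence of the Riesz representation theorem, so no separate argument is needed.
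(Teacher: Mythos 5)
Your proposal is correct and follows essentially the same route as the paper: compute the weak derivative $\partial_x\eta(\cdot,y)$ (your corrected values $\Phi(x)/\psi(x)$ for $x<y$ and $-(1-\Phi(x))/\psi(x)$ for $x>y$ agree with \eqref{eq:Deta}), show $I_\rho(\eta(\cdot,y))=0$ by Fubini (this is exactly the paper's \eqref{eq:zero1D}), and obtain the reproducing property by integrating by parts on the two half-lines with the boundary terms at $\pm\infty$ killed by Lemma~\ref{lem:lim1D}. The only slip is the parenthetical claim that $\eta(\cdot,y)$ is bounded because ``the middle integral is over a compact set'': for fixed $y$ the interval $[\min(x,y),\max(x,y)]$ grows with $x$, and under \eqref{eq:weak} alone $\eta(x,y)$ may tend to $-\infty$ as $x\to+\infty$ (precisely when \eqref{eq:strong} fails); this is harmless, however, since the Fubini/Tonelli computation you then invoke establishes both the absolute integrability of $\eta(\cdot,y)$ against $\rho$ and the value $I_\rho(\eta(\cdot,y))=0$.
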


\begin{proof}
First we observe that $K$ is symmetric, and can be written in two
other ways:
\begin{align}
 \eta(x, y)
 &=\int_{-\infty}^{\max(x, y)} \frac{(\Phi(t))^2}{\psi(t)} \rd t
 + \int_{\max(x, y)}^\infty \!\!\frac{(1 - \Phi(t))^2}{\psi(t)} \rd t
 - \int_{\min(x,y)}^{\max(x,y)} \frac{\Phi(t)}{\psi(t)} \rd t \label{eq:eta1Dmax}
 \\
 &\le\, \eta(\max(x,y),\max(x,y)), \nonumber
 \end{align}
 and
 \begin{align}
 \eta(x, y)
 &=\int_{-\infty}^{\min(x, y)} \!\frac{(\Phi(t))^2}{\psi(t)} \rd t
 + \!\int_{\min(x, y)}^\infty \!\!\!\frac{(1 - \Phi(t))^2}{\psi(t)} \rd t
 - \!\int_{\min(x,y)}^{\max(x,y)} \frac{1-\Phi(t)}{\psi(t)} \rd t \label{eq:eta1Dmin}
 \\
 &\le\, \eta(\min(x,y),\min(x,y)). \nonumber
\end{align}
Moreover, we have
\begin{align*}
 0\,\le\, \eta(y,y)
 &\,=\, \int_{-\infty}^y \frac{(\Phi(t))^2}{\psi(t)} \,\rd t
 + \int_{y}^\infty \frac{(1-\Phi(t))^2}{\psi(t)} \,\rd t \,<\,\infty,
\end{align*}
where the integrals are finite due to \eqref{eq:weak}.
Thus $K(x,y)<\infty$ for all $x,y\in\bbR$.

To verify that $\calW$ is a RKHS with kernel given by \eqref{eq:ker1D}, we
have to show that (i) $K(\cdot, y) \in \calW$ for all $y \in \R$, and (ii)
$f(y) = \langle f, K(\cdot, y)\rangle_{\calW}$ for all $f \in \calW$ and
$y \in \R$.

For any $y\in\bbR$ we use \eqref{eq:eta1Dmax} for $x\le y$ and
\eqref{eq:eta1Dmin} for $x > y$ to write
\begin{align} \label{eq:zero1D}
\int_{-\infty}^\infty &\eta(x, y)\,\rho(x) \,\rd x
 \nonumber\\
 &=\, \int_{-\infty}^y  \bigg(\int_{-\infty}^{y} \frac{(\Phi(t))^2}{\psi(t)} \,\rd t
 + \int_{y}^\infty \frac{(1 - \Phi(t))^2}{\psi(t)} \,\rd t
 - \int_{x}^{y} \frac{\Phi(t)}{\psi(t)} \,\rd t \bigg)\rho(x) \,\rd x \nonumber\\
 &\quad + \int_y^\infty  \bigg(\int_{-\infty}^y \frac{(\Phi(t))^2}{\psi(t)} \,\rd t
 + \int_y^\infty \frac{(1 - \Phi(t))^2}{\psi(t)} \,\rd t
 - \int_y^x \frac{1 - \Phi(t)}{\psi(t)} \,\rd t \bigg)\rho(x) \,\rd x \nonumber\\
 &=\, \int_{-\infty}^y \frac{(\Phi(t))^2}{\psi(t)} \,\rd t
 + \int_{y}^\infty \frac{(1 - \Phi(t))^2}{\psi(t)} \,\rd t \nonumber\\
 &\quad - \int_{-\infty}^y  \frac{\Phi(t)}{\psi(t)} \bigg(\int_{-\infty}^t \rho(x)\,\rd x \bigg) \,\rd t
 - \int_y^\infty  \frac{1-\Phi(t)}{\psi(t)} \bigg(\int_t^\infty \rho(x) \,\rd x \bigg) \,\rd t \nonumber\\
 &=\, 0,
\end{align}
where in the second step we used the Fubini theorem to change the order of
integration. Thus
\begin{equation} \label{eq:intK1D}
 \int_{-\infty}^\infty K(x,y)\, \rho(x) \,\rd x
   \,=\, 1 + \gamma\,\int_{-\infty}^\infty \eta(x, y)\,\rho(x) \,\rd x \,=\, 1.
\end{equation}

Now we have
\begin{align}
\label{eq:DK}
 \pd{}{}{x} K(x, y) \,&=\, \gamma \pd{}{}{x} \eta(x, y),
  \quad\text{with}
\\
\label{eq:Deta}
 \pd{}{}{x} \eta(x, y) \,&=\,
 \bbone_{(-\infty, y)}(x)\, \frac{\Phi(x)}{\psi(x)}
 \,-\, \bbone_{(y, \infty)}(x)\, \frac{1-\Phi(x)}{\psi(x)},
\end{align}
which follows easily by differentiating \eqref{eq:eta1Dmax} for $x\le y$
and \eqref{eq:eta1Dmin} for $x > y$. Thus
\begin{align} \label{eq:intDeta}
 \int_{-\infty}^\infty \bigg|\pd{}{}{x} \eta(x, y)\bigg|^2 \psi(x) \,\rd x
 \,&=\, \int_{-\infty}^y \frac{(\Phi(x))^2}{\psi(x)} \,\rd x
 + \int_{y}^\infty \frac{(1-\Phi(x))^2}{\psi(x)} \,\rd x
 \nonumber\\
&=\, \eta(y,y),
\end{align}
and so
\begin{align} \label{eq:dKL2norm}
  \bigg\|\pd{}{}{x} K(\cdot, y)\bigg\|_{\calL^2_\psi}^2
  \,=\, \int_{-\infty}^\infty \bigg|\pd{}{}{x} K(x, y)\bigg|^2 \psi(x) \,\rd x
  \,=\, \gamma^2\,\eta(y,y).
\end{align}
Hence we conclude that
\begin{equation}\label{eq:K1norm}
 \|K(\cdot,y) \|_{\calW}^2 \,=\, 1 + \gamma\,\eta(y,y) \,<\, \infty,
\end{equation}
and $K(\cdot, y) \in \calW$ for all $y \in \R$ as required for (i).

For the reproducing property (ii), we have from \eqref{eq:intK1D} that for
any $f\in\calW$ and $y\in\bbR$,
\begin{equation} \label{eq:repro1D}
 \langle f, K(\cdot, y) \rangle_{\calW} \,=\,
 \int_{-\infty}^\infty f(x)\,\rho(x) \,\rd x
 + \frac{1}{\gamma}
 \int_{-\infty}^\infty f'(x)\, \bigg( \pd{}{}{x} K(x, y)\bigg)\, \psi(x) \,\rd x.
\end{equation}
Using \eqref{eq:DK} and \eqref{eq:Deta}, then applying integration by
parts, we obtain
\begin{align*}
 &\frac{1}{\gamma} \int_{-\infty}^\infty f'(x)\,\bigg( \pd{}{}{x} K(x, y) \bigg)\, \psi(x) \,\rd x\\
 &=\, \lim_{R\to\infty}
 \bigg(\int_{-R}^y f'(x)\, \Phi(x) \,\rd x \bigg)
 - \lim_{R\to\infty}\bigg(\int_{y}^R f'(x)\, (1 - \Phi(x)) \,\rd x \bigg) \\
 &=\, \lim_{R\to\infty}
 \bigg(f(y)\,\Phi(y) - f(-R)\,\Phi(-R) - \int_{-R}^y f(x)\,\rho(x)\,\rd x \bigg)\\
 &\qquad
- \lim_{R\to\infty}\bigg(f(R)\,(1-\Phi(R)) + f(y)\,(1-\Phi(y)) - \int_y^R f(x)\, \rho(x) \,\rd x \bigg)\\
 &=\, f(y) - \int_{-\infty}^\infty f(x)\, \rho(x) \,\rd x,
\end{align*}
where the boundary terms vanish as $R\to\infty$ by Lemma~\ref{lem:lim1D}.
Substituting this into \eqref{eq:repro1D} gives the reproducing property
(ii).

Point evaluation is now clearly bounded, since for all $f\in\calW$ and
$y\in\bbR$ we have from \eqref{eq:K1norm}
\begin{align*}
  \big(f(y)\big)^2 \,=\, \langle f, K(\cdot, y) \rangle_{\calW}^2
  \,\le\, \|f\|_\calW^2\,\|K(\cdot, y)\|_\calW^2
   \,=\, \|f\|_\calW^2\, \big(1+\gamma\,\eta(y,y)\big) \,<\,\infty.
\end{align*}
This completes the proof.
\end{proof}

Before leaving the reproducing kernel properties it is convenient to
observe that the subspace $\calV\subset \calW$ defined by
\begin{equation}\label{eq:calVdef}
  \calV \,\coloneqq\, \bigg\{f\in \calW: \int_{-\infty}^\infty f(t)\,
  \rho(t)\,\rd t = 0 \bigg\}
\end{equation}
is a RKHS when equipped with the inner product
\[
 \langle f, \widetilde{f} \rangle_\calV
 \,\coloneqq\, \int_{-\infty}^\infty f'(x)\,\widetilde{f}\,'(x)\,\psi(x)\,\rd x,
 \qquad f, \widetilde{f} \,\in\,\calV.
\]
Note first that for $f,\widetilde{f} \in \calV$ we have, by definition of
the $\calW$ inner product,
\[
 \langle f,\widetilde{f}\rangle_\calW \,=\, \frac{1}{\gamma}\langle f,\widetilde{f}\rangle_\calV.
\]
Thus for $f \in \calV \subset \calW $ we have
\begin{equation}\label{eq:reproV}
 f(y)
 \,=\, \langle f, K(\cdot,y) \rangle_\calW
 \,=\, \gamma\, \langle f, \eta(\cdot,y) \rangle_\calW
 \,=\, \langle f, \eta(\cdot,y)\rangle_\calV,
\end{equation}
from which it follows that $\eta(x,y)$ is the reproducing kernel in
$\calV$ (noting that we have already proved in \eqref{eq:zero1D} that
$\eta(\cdot,y)\in \calV$).

In fact, the space $\calW$ is the direct sum of $\calV$ and the space of
constant functions, since an arbitrary $f \in \calW$ can be written
uniquely as
\begin{equation}\label{eq:anova1}
 f \,=\, f_\emptyset + f_{\{1\}},
\end{equation}
where $f_{\{1\}} \in \calV$ and $f_\emptyset$ is a constant function given
by
\[
f_\emptyset \,=\, \int_{-\infty}^\infty f(x)\, \rho(x) \,\rd x.
\]
(We shall see in Section~\ref{sec:HD} that \eqref{eq:anova1} is a special
case of the ANOVA decomposition.  We are anticipating here the notation to
be used in Section~\ref{sec:HD}.)

\subsection{Norm equivalence in $\calH$ and $\calW$ under the stronger condition
\eqref{eq:strong}} \label{sec:embed1D}

In this Subsection we assume the stronger condition \eqref{eq:strong}.
Under this condition it is easily seen that the kernel $\eta$ defined in
\eqref{eq:eta1Da} can be rewritten as
\begin{align} \label{eq:eta1Db}
 \eta(x, y)
 &\,=\, \int_{-\infty}^{\min(x, y)} \frac{\Phi(t)}{\psi(t)} \,\rd t
 + \int_{\max(x, y)}^\infty \frac{1 - \Phi(t)}{\psi(t)} \,\rd t
 - C(\rho, \psi),
\end{align}
where $C(\rho,\psi)$ is defined in \eqref{eq:C-def}.

We will now show that the norms in $\calH$ and $\calW$ are equivalent. The
embedding constant in \eqref{eq:embedding1} will be improved in
Theorem~\ref{thm:equiv1D}.

\begin{lemma} \label{lem:L2emb}
If the condition \eqref{eq:strong} holds then
\begin{align} \label{eq:embedding1}
  \|f\|_{\calL^2_\rho}^{2}
  &\,\le\, \big(1 + \gamma\,C(\rho,\psi)\big)\,\|f\|_\calW^2
 \qquad\mbox{for all}\quad f \in \calW, \quad\mbox{and}\quad \nonumber\\
  \|f\|_{\calH}^2
  &\,\le\, \big(2 + \gamma\,C(\rho,\psi)\big)\,\|f\|_\calW^2
 \qquad\mbox{for all}\quad f \in \calW,
\end{align}
where $C(\rho,\psi)$ is defined in \eqref{eq:C-def}.
\end{lemma}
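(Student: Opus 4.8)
The plan is to exploit the reproducing kernel structure of $\calW$ established in the previous subsection, now in its simplified form \eqref{eq:eta1Db} that is valid under \eqref{eq:strong}. Since $K(\cdot,y)\in\calW$ with $\|K(\cdot,y)\|_\calW^2 = 1+\gamma\,\eta(y,y)$ (see \eqref{eq:K1norm}) and $f(y)=\langle f,K(\cdot,y)\rangle_\calW$ for every $f\in\calW$ and $y\in\R$, the Cauchy--Schwarz inequality gives the pointwise bound
\[
 |f(y)|^2 \,\le\, \|f\|_\calW^2\,\|K(\cdot,y)\|_\calW^2 \,=\, \|f\|_\calW^2\,\bigl(1+\gamma\,\eta(y,y)\bigr) \qquad\text{for all }y\in\R.
\]
Integrating this against the density $\rho$ and pulling out the constant $\|f\|_\calW^2$ yields
\[
 \|f\|_{\calL^2_\rho}^2 \,\le\, \|f\|_\calW^2\,\Bigl(1+\gamma\int_{-\infty}^\infty \eta(y,y)\,\rho(y)\,\rd y\Bigr),
\]
so the first claimed inequality follows once I show $\int_{-\infty}^\infty \eta(y,y)\,\rho(y)\,\rd y = C(\rho,\psi)$.

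To evaluate that integral I would use \eqref{eq:eta1Db}, i.e.\ $\eta(y,y)=\int_{-\infty}^y \Phi(t)/\psi(t)\,\rd t + \int_y^\infty (1-\Phi(t))/\psi(t)\,\rd t - C(\rho,\psi)$, which is legitimate here because \eqref{eq:strong} holds. Since all three pieces have definite sign, Tonelli's theorem applies directly: swapping the order of integration in $\int_{-\infty}^\infty\bigl(\int_{-\infty}^y \Phi(t)/\psi(t)\,\rd t\bigr)\rho(y)\,\rd y$ produces $\int_{-\infty}^\infty \frac{\Phi(t)}{\psi(t)}\bigl(\int_t^\infty\rho(y)\,\rd y\bigr)\rd t = \int_{-\infty}^\infty \frac{\Phi(t)(1-\Phi(t))}{\psi(t)}\,\rd t = C(\rho,\psi)$, and the second piece similarly contributes another $C(\rho,\psi)$, while the constant term contributes $-C(\rho,\psi)$ because $\rho$ integrates to $1$. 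Hence the diagonal integral equals $C(\rho,\psi)+C(\rho,\psi)-C(\rho,\psi)=C(\rho,\psi)$, finiteness being guaranteed by \eqref{eq:strong}/\eqref{eq:C-def}. This establishes $\|f\|_{\calL^2_\rho}^2\le(1+\gamma\,C(\rho,\psi))\,\|f\|_\calW^2$, and in particular $\calW$ is embedded in $\calL^2_\rho$.

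For the second inequality I would not redo any analysis, but simply combine the first with the fact that the $\calH$- and $\calW$-norms share their derivative term: from \eqref{eq:Hnorm1D2} and \eqref{eq:Wnorm1D2},
\[
 \|f\|_\calH^2 \,=\, \|f\|_{\calL^2_\rho}^2 + \tfrac1\gamma\|f'\|_{\calL^2_\psi}^2 \,=\, \|f\|_{\calL^2_\rho}^2 + \|f\|_\calW^2 - |I_\rho(f)|^2 \,\le\, \|f\|_{\calL^2_\rho}^2 + \|f\|_\calW^2,
\]
and then applying the already-proved bound on $\|f\|_{\calL^2_\rho}^2$ gives $\|f\|_\calH^2 \le (2+\gamma\,C(\rho,\psi))\,\|f\|_\calW^2$; in particular $f\in\calH$, so $\calW$ is embedded in $\calH$ (and, together with the trivial embedding $\calH\hookrightarrow\calW$, the two spaces are equivalent). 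The only step requiring genuine care is the interchange of integrals when computing $\int\eta(y,y)\rho(y)\,\rd y$, but since every integrand there is nonnegative this is a routine application of Tonelli rather than a real obstacle; that identity is the one computational fact doing the work, everything else being bookkeeping with the norms.
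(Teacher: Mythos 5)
Your proposal is correct and follows essentially the same route as the paper: bound $|f(y)|^2$ via the reproducing property and Cauchy--Schwarz, integrate against $\rho$, evaluate $\int_{-\infty}^\infty \eta(y,y)\,\rho(y)\,\rd y = C(\rho,\psi)$ by swapping the order of integration, and obtain the $\calH$-bound from $\|f\|_\calH^2 \le \|f\|_{\calL^2_\rho}^2 + \|f\|_\calW^2$. The only (immaterial) difference is that you compute the diagonal integral from the rewritten kernel \eqref{eq:eta1Db}, whereas the paper works directly from \eqref{eq:eta1Da}; both reduce to the same Fubini/Tonelli computation.
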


\begin{proof}
For any $f\in\calW$, we use the reproducing property to write
\begin{align*}
  \|f\|_{\calL^2_\rho}^2 \,&=\,\int_{-\infty}^\infty |f(y)|^2 \,\rho(y)\,\rd y
  \,=\, \int_{-\infty}^\infty |\langle f, K(\cdot,y)\rangle_\calW|^2 \,\rho(y)\,\rd y \\
  &\,\le\, \int_{-\infty}^\infty \|f\|_\calW^2\,\|K(\cdot,y)\|_\calW^2 \,\rho(y)\,\rd y
 \,=\, \| f\|_{\calW}^2 \int_{-\infty}^\infty K(y, y)\, \rho(y) \,\rd y,
\end{align*}
where we used the symmetry and the reproducing property of the kernel to
write $\|K(\cdot,y)\|_\calW^2 = \langle K(\cdot, y), K(\cdot,
y)\rangle_{\calW} = K(y,y)$.

Starting from the formula \eqref{eq:eta1Da}, we use the Fubini theorem to
change the order of integration, to obtain
\begin{align}
\label{eq:int-eta-yy}
&\int_{-\infty}^\infty \eta(y, y)\, \rho(y) \,\rd y
\,=\, \int_{-\infty}^\infty \bigg(\int_{-\infty}^{y} \frac{(\Phi(t))^2}{\psi(t)} \,\rd t
 + \int_{y}^\infty \frac{(1 - \Phi(t))^2}{\psi(t)} \,\rd t\bigg) \rho (y) \,\rd y
 \nonumber\\
&\,=\, \int_{-\infty}^\infty \frac{(\Phi(t))^2}{\psi(t)} \bigg(\int_{t}^{\infty} \rho (y) \,\rd y \bigg) \,\rd t
 + \int_{-\infty}^\infty \frac{(1 - \Phi(t))^2}{\psi(t)} \bigg(\int_{-\infty}^t \rho (y) \,\rd y \bigg) \,\rd t
 \nonumber\\
&\,=\, \int_{-\infty}^\infty \frac{(\Phi(t))^2}{\psi(t)} (1-\Phi(t)) \,\rd t
 + \int_{-\infty}^\infty \frac{(1 - \Phi(t))^2}{\psi(t)} \Phi(t) \,\rd t
 \nonumber\\
&\,=\, \int_{-\infty}^\infty \frac{\Phi(t)\,(1-\Phi(t))}{\psi(t)} \,\rd t
\nonumber\\
&\,=\, C(\rho,\psi),
\end{align}
which is finite under the assumption \eqref{eq:C-def}, or equivalently
\eqref{eq:strong}. Hence
\[
  \int_{-\infty}^\infty K(y, y)\, \rho(y) \,\rd y \,=\, 1 + \gamma\,C(\rho,\psi),
\]
which leads to the first bound. The second bound then follows from
$\|f\|_\calH^2 \le \|f\|_{\calL^2_\rho}^2 + \|f\|_\calW^2$, which follows
from the definitions \eqref{eq:Hnorm1D2} and \eqref{eq:Wnorm1D2}.
\end{proof}

Since we have now proved the norm equivalence of $\calH$ and $\calW$ under
the stronger condition~\eqref{eq:strong}, and since $\calW$ is a RKHS, it
follows that $\calH$ under the same condition is another RKHS, but not one
with a known simple kernel that corresponds to the inner product
\eqref{eq:inner1DH}. Since the spaces $\calH$ and $\calW$ are equivalent,
it makes sense that from now on we choose to work with the inner product
\eqref{eq:inner1DW}, and use interchangeably the names $\calH$ and $\calW$
for the space itself.

Finally we restate the main result of this subsection, that the $\calH$
and $\calW$ norms are equivalent under the condition \eqref{eq:strong},
but now with an improved embedding constant. We do not know whether the
embedding property holds (with a different embedding constant) under the
weaker condition \eqref{eq:weak} or some other intermediate condition.

\begin{theorem} \label{thm:equiv1D}
Under the condition \eqref{eq:strong}, the spaces $\calH$ and $\calW$ are
equivalent, with
\begin{equation} \label{eq:equiv1Dnew}
 \|f\|_{\calW}^2 \,\leq\, \|f\|_\calH^2
 \,\leq\, \big(1 + \gamma\,C(\rho,\psi) \big)\,\|f\|_{\calW}^2
 \qquad\mbox{for all}\quad f \in \calH,
\end{equation}
where $C(\rho,\psi)$ is defined in \eqref{eq:C-def}. This space is a
reproducing kernel Hilbert space and, when equipped with the inner
product \eqref{eq:inner1DW}, its kernel is given by \eqref{eq:ker1D}.
\end{theorem}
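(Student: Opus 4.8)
The lower bound $\|f\|_\calW^2 \le \|f\|_\calH^2$ is already in hand via Cauchy--Schwarz, so the plan is to establish the upper bound with the \emph{sharp} constant $1+\gamma\,C(\rho,\psi)$, improving the bound $2+\gamma\,C(\rho,\psi)$ of Lemma~\ref{lem:L2emb}. The key is to exploit the orthogonal splitting \eqref{eq:anova1}, $f = f_\emptyset + f_{\{1\}}$ with $f_\emptyset = I_\rho(f)$ constant and $f_{\{1\}}\in\calV$. Since $f_\emptyset'\equiv 0$ we have $f' = f_{\{1\}}'$, and since $\int_{-\infty}^\infty f_{\{1\}}(x)\,\rho(x)\,\rd x = 0$ the cross term in $\|f\|_{\calL^2_\rho}^2 = \int_{-\infty}^\infty |f_\emptyset + f_{\{1\}}(x)|^2\,\rho(x)\,\rd x$ vanishes, giving $\|f\|_{\calL^2_\rho}^2 = |f_\emptyset|^2 + \|f_{\{1\}}\|_{\calL^2_\rho}^2$. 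Comparing this against \eqref{eq:Hnorm1D2} and \eqref{eq:Wnorm1D2} yields the exact identity $\|f\|_\calH^2 - \|f\|_\calW^2 = \|f_{\{1\}}\|_{\calL^2_\rho}^2$, so the whole statement reduces to a Poincar\'e-type inequality on $\calV$.

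That Poincar\'e inequality, $\|g\|_{\calL^2_\rho}^2 \le C(\rho,\psi)\,\|g'\|_{\calL^2_\psi}^2$ for all $g\in\calV$, I would prove using the reproducing kernel $\eta$ of $\calV$ from \eqref{eq:reproV}: for each $y\in\bbR$, $g(y) = \langle g, \eta(\cdot,y)\rangle_\calV$, hence $|g(y)|^2 \le \|g\|_\calV^2\,\eta(y,y) = \|g'\|_{\calL^2_\psi}^2\,\eta(y,y)$ by Cauchy--Schwarz. Integrating against $\rho$ and invoking the already-computed identity $\int_{-\infty}^\infty \eta(y,y)\,\rho(y)\,\rd y = C(\rho,\psi)$ from \eqref{eq:int-eta-yy} gives precisely the claimed inequality. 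Applying it to $g = f_{\{1\}}$ (recalling $f' = f_{\{1\}}'$) and combining with the identity above,
\[
\|f\|_\calH^2 \,=\, \|f\|_\calW^2 + \|f_{\{1\}}\|_{\calL^2_\rho}^2 \,\le\, \|f\|_\calW^2 + C(\rho,\psi)\,\|f'\|_{\calL^2_\psi}^2 \,\le\, \big(1+\gamma\,C(\rho,\psi)\big)\,\|f\|_\calW^2,
\]
where the last step uses $\|f'\|_{\calL^2_\psi}^2 = \gamma\big(\|f\|_\calW^2 - |I_\rho(f)|^2\big) \le \gamma\,\|f\|_\calW^2$.

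The reproducing-kernel assertion needs no new work: condition \eqref{eq:strong} implies \eqref{eq:weak}, so the earlier theorem already shows $\calW$ is a RKHS with kernel \eqref{eq:ker1D}, while Lemma~\ref{lem:L2emb} gives $\calW\subseteq\calL^2_\rho$, whence $\calH$ and $\calW$ coincide as sets; equipping this common space with the inner product \eqref{eq:inner1DW} is exactly $\calW$, so the kernel is \eqref{eq:ker1D}. I do not expect a genuine obstacle here: the one point requiring care is the bookkeeping that makes the ANOVA cross term disappear and collapses the theorem to the Poincar\'e inequality on $\calV$ — once that reduction is made, the kernel computation \eqref{eq:int-eta-yy} supplies the constant and everything closes up.
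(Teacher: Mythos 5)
Your proposal is correct and follows essentially the same route as the paper's proof: the ANOVA split $f=f_\emptyset+f_{\{1\}}$, the double-orthogonality identity $\|f\|_\calH^2=\|f\|_\calW^2+\|f_{\{1\}}\|_{\calL^2_\rho}^2$, and the bound on $\|f_{\{1\}}\|_{\calL^2_\rho}^2$ via the reproducing kernel $\eta$ of $\calV$ together with \eqref{eq:int-eta-yy}. Packaging the key estimate as a Poincar\'e inequality on $\calV$ and then using $\|f'\|_{\calL^2_\psi}^2\le\gamma\|f\|_\calW^2$ is only a cosmetic rearrangement of the same inequalities used in the paper.
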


\begin{proof}
As discussed at the start of this section the first inequality follows by
the Cauchy--Schwarz inequality. Together with \eqref{eq:embedding1}
this implies that the spaces $\calH$ and $\calW$ are equivalent.  We also
know from the preceding subsection that for every $f\in\calW$, or
equivalently $f\in\calH$, we can write uniquely
\[
  f \,=\, f_\emptyset + f_{\{1\}},
\]
where
\[
  f_\emptyset \,\coloneqq\,  I_\rho(f) \,=\, \int_{-\infty}^\infty f(x)\,\rho(x) \, \rd x
  \qquad\mbox{and}\qquad
  f_{\{1\}} \,\coloneqq\, f - f_\emptyset \in \calV.
\]
It is easily seen that the two terms $ f_{\{1\}}$ and $f_\emptyset$ are
orthogonal in $\calL^2_\rho$ (by virtue of \eqref{eq:calVdef}) and also
orthogonal in $\calW$, from which it follows that
\begin{align*}
  \|f\|_{\calL^2_\rho}^2
  &\,=\, \|f_\emptyset\|_{\calL^2_\rho}^2 + \|f_{\{1\}}\|_{\calL^2_\rho}^2
  \,=\, f_\emptyset^2 + \|f_{\{1\}}\|_{\calL^2_\rho}^2,
  \\
\|f\|_\calW^2
  &\,=\, \|f_\emptyset\|_\calW^2 + \|f_{\{1\}}\|_\calW^2
  \,=\, f_\emptyset^2 + \frac{1}{\gamma}\,\|f'_{\{1\}}\|_{\calL^2_\psi}^2,
\end{align*}
and therefore
\begin{align} \label{eq:part1}
  \|f\|_\calH^2
  \,=\, \|f\|_{\calL^2_\rho}^2 + \frac{1}{\gamma}\,\|f'\|_{\calL^2_\psi}^2
  \,&=\, f_\emptyset^2 + \|f_{\{1\}}\|_{\calL^2_\rho}^2 + \frac{1}{\gamma}\,\|f'_{\{1\}}\|_{\calL^2_\psi}^2
  \nonumber\\
  &=\, \|f\|_\calW^2 + \|f_{\{1\}}\|_{\calL^2_\rho}^2.
\end{align}
(We shall meet this double orthogonality property in more general form in
Section~\ref{sec:HD}.) From the reproducing property \eqref{eq:reproV} in
$\calV$ we have
\begin{align*}
  \big|f_{\{1\}}(y)\big|^2
  \,=\, \langle f_{\{1\}}, \eta(\cdot,y)\rangle_{\calV}^2
  \,&\le\, \|f_{\{1\}}\|_{\calV}^2\, \|\eta(\cdot,y)\|_{\calV}^2
  \\
  &=\, \|f_{\{1\}}\|_{\calV}^2\, \eta(y,y)
  \,\le\, \gamma\,\|f\|_\calW^2\, \eta(y,y),
\end{align*}
and therefore
\begin{align} \label{eq:part2}
  \|f_{\{1\}}\|_{\calL^2_\rho}^2
  \,&=\, \int_{-\infty}^\infty \big|f_{\{1\}}(y)\big|^2\,\rho(y)\,\rd y
  \nonumber\\
  &\le\, \gamma\,\|f\|_\calW^2\, \int_{-\infty}^\infty \eta(y,y)\,\rho(y)\,\rd y
  \,=\, \gamma\,C(\rho,\psi)\,\|f\|_\calW^2.
\end{align}
Inserting \eqref{eq:part2} into \eqref{eq:part1} gives the required
result.
\end{proof}

\begin{remark}\label{rem:embed-constant}
Although they do not consider the spaces $\calH$ and $\calW$ explicitly,
in \cite{HefRit15} the setting is general enough that it covers the
equivalence \eqref{eq:equiv1Dnew}. However, the constants in
\cite{HefRit15} are $\big(1 + \sqrt{\gamma}\, C(\rho, \psi) + \gamma\,
(C(\rho, \psi))^2\big)^{-1}$ on the left and $1 + \sqrt{\gamma}\, C(\rho,
\psi)+ \gamma\, (C(\rho, \psi))^2$ on the right, which are not as sharp as
our constants of $1$ and $1 + \gamma\, C(\rho, \psi)$, respectively. Also,
in $d$ dimensions they only consider tensor product spaces and so the
remainder of this paper is not covered by \cite{HefRit15}.
\end{remark}

\section{The $d$-dimensional case} \label{sec:HD}

Let $d\ge 1$. In this section we will define the Sobolev space $\calH_d$
and the ANOVA space $\calW_d$ in $d$ dimensions, and then show that their
norms are equivalent under the condition \eqref{eq:strong}. Starting
with the set of locally integrable functions on $\R^d$, we define the
spaces $\calH_d$ and $\calW_d$ to be the restriction of such functions for which
the norms \eqref{eq:H-norm} and \eqref{eq:W-norm} are finite,
respectively. Both $\calH_d$ and $\calW_d$ are Hilbert spaces, and rather
than restating the norms, here we give the corresponding inner products:
\begin{align}
 \langle f, \widetilde{f} \rangle_{\calH_d}
 \,\coloneqq\,
 &\sum_{\setu \subseteq \setD} \frac{1}{\gamma_\setu}
 \int_{\R^d} \partial^\setu f(\bsx)\,\partial^\setu \widetilde{f}(\bsx)\,
 \bspsi_\setu(\bsx_\setu)\, \bsrho_{\setD\setminus\setu}(\bsx_{\setD\setminus\setu}) \,\rd\bsx, \nonumber
 \\
  \label{eq:W-inner}
 \langle f, \widetilde{f} \rangle_{\calW_d}
 \,\coloneqq\,
 &\sum_{\setu \subseteq \setD} \frac{1}{\gamma_\setu}
 \int_{\R^{|\setu|}} \bigg(\int_{\R^{d - |\setu|}} \partial^\setu f(\bsx_\setu,\bsx_{\setD\setminus\setu})\,
 \bsrho_{\setD\setminus\setu}(\bsx_{\setD\setminus\setu})
 \,\rd\bsx_{\setD\setminus\setu} \bigg)
\\\nonumber
&\qquad \cdot \bigg(\int_{\R^{d- |\setu|}} \partial^\setu \widetilde{f}(\bsx_\setu, \bsx_{\setD\setminus\setu})\,
 \bsrho_{\setD\setminus\setu}(\bsx_{\setD\setminus\setu})
 \,\rd\bsx_{\setD\setminus\setu}
 \bigg)
 \bspsi_\setu(\bsx_\setu)\,\rd \bsx_\setu.
\end{align}

In the following, we explain the various ingredients in detail. First, we
give the notation for mixed partial derivatives of first order. Let
\[
 \partial^j \,\coloneqq\, \pd{}{}{x_j},
\]
and for a subset $\setu \subseteq \setD\coloneqq \{1,2, \ldots,d\}$, let
$\partial^\setu$ denote the first-order mixed partial derivative with
respect to the variables $x_j$ for $j\in\setu$, given by
\[
 \partial^\setu \,\coloneqq\, \prod_{j \in \setu} \pd{}{}{x_j}
 \,=\, \prod_{j \in \setu} \partial^j.
\]
These derivatives should be understood as \emph{weak derivatives}: for
example the $\setu$-th weak derivative of $f$ is the locally integrable function
$\partial^\setu f : \R^d \to \R$ satisfying
\begin{equation} \label{eq:weak_D}
 \int_{\R^d} \partial^\setu f(\bsx)\, v(\bsx) \,\rd \bsx
 \,=\, (-1)^{|\setu|} \int_{\R^d} f(\bsx)\, \partial^\setu v(\bsx) \,\rd \bsx
 \qquad\text{for all}\quad v \in \calC^\infty_0(\R^d),
\end{equation}
where $|\setu|$ is the cardinality of $ \setu$, and $\calC^\infty_0(\R^d)$
is the space of infinitely differentiable functions on $\R^d$ with compact
support.
Since the weak derivative, if it exists, is assumed to be locally 
integrable, it follows that it is equivalent to a function for which 
point evaluation is well defined almost everywhere on $\R^d$. 
Hence, in each term in the $\calW$ norm \eqref{eq:W-norm} and inner product \eqref{eq:W-inner}
the inner integral over $\R^{d - |\setu|}$ is well defined for almost all $\bsx_\setu \in \R^{|\setu|}$. 
Then since we eventually integrate with respect to 
$\bsx_\setu \in \R^{|\setu|}$, both the norm and inner product in $\calW$ are well defined.

For each $j=1,\ldots,d$, let $\rho_j: \R \to \R_+$ be a strictly positive
probability density function defined on~$\R$ with the corresponding
distribution function denoted by $\Phi_j$, and let $\psi_j: \R \to \R_+$
be a locally integrable strictly positive function on~$\R$ such that
$1/\psi_j$ is also locally integrable. Both spaces $\calH_d$ and $\calW_d$
involve weak derivatives and weight functions, with
differentiated variables weighted differently to undifferentiated
variables. For each $\setu\subseteq\setD$, the ``active variables'' (the
differentiated variables) are weighted by the product
\[
 \bspsi_\setu(\bsx_\setu) \,\coloneqq\, \prod_{j \in \setu} \psi_j(x_j),
 \qquad\mbox{with}\quad \bsx_\setu\coloneqq\{x_j: j\in\setu\},
\]
while the ``inactive variables'' are weighted by
\[
 \bsrho_{\setD\setminus\setu}(\bsx_{\setD\setminus\setu})
 \,\coloneqq\, \prod_{j\in\setD\setminus\setu} \rho(x_j),
 \qquad\mbox{with}\qquad \bsx_{\setD\setminus\setu} \,\coloneqq\, \{x_j:j\in\setD\setminus\setu\}.
\]

Both $\calH_d$ and $\calW_d$ also involve weight parameters: there is a
weight parameter $\gamma_\setu>0$ for each $\setu\subseteq\setD$, and
together they moderate the relative contribution of the $2^d$ derivatives
to the norm. For a function $f$ in the unit ball, if $\gamma_\setu$ is
small then the derivative $\partial^\setu f$ must contribute less to the
norm. In the limiting case where $\gamma_\setu = 0$, we assume that
$\partial^\setu f\equiv 0$.

Using the Cauchy--Schwarz inequality and the fact that each $\rho_j$ is a
probability density, we have
\begin{align*}
 &\bigg|\int_{\R^{d - |\setu|}} \partial^\setu f(\bsx_\setu, \bsx_{\setD\setminus\setu})\,
 \bsrho_{\setD\setminus\setu}(\bsx_{\setD\setminus\setu})
 \,\rd\bsx_{\setD\setminus\setu} \bigg|^2
\\
 &\le\, \int_{\R^{d - |\setu|}} \big|\partial^\setu f(\bsx_\setu, \bsx_{\setD\setminus\setu})\big|^2\,
 \bsrho_{\setD\setminus\setu}(\bsx_{\setD\setminus\setu})
 \,\rd\bsx_{\setD\setminus\setu}.
\end{align*}
Thus $\|f\|_{\calW_d}^2 \le \|f\|_{\calH_d}^2$ and we conclude trivially
that $\calH_d$ is embedded in $\calW_d$.

The $\setu = \emptyset$ terms in the norm for $\calH_d$ and $\calW_d$
correspond to, respectively,
\[
 \|f\|_{\calL^2_{\bsrho}}^2 \,\coloneqq\,
 \int_{\R^d} \big| f(\bsx)\big|^2\, \bsrho(\bsx) \,\rd\bsx
  \qquad\mbox{and}\qquad
  \big|I_\bsrho(f)\big|^2
  \,\coloneqq\, \bigg|\int_{\bbR^d} f(\bsx)\,\bsrho(\bsx)\,\rd\bsx\bigg|^2,
\]
with $\bsrho(\bsx) \coloneqq \prod_{j=1}^d \rho_j(x_j)$. This is consistent with
the representations \eqref{eq:Hnorm1D2}--\eqref{eq:Wnorm1D2} in one
dimension. Trivially we know that $\calH_d$ is embedded in
$\calL^2_\bsrho$. But $\calW_d$ may or may not be embedded in
$\calL^2_\bsrho$.

We follow the general strategy for the $1$-dimensional case to obtain norm
equivalence for $\calH_d$ and $\calW_d$. In Subsection~\ref{sec:kernel} we
verify that the reproducing kernel for $\calW_d$ exists under the weaker
condition \eqref{eq:weak} for the weight functions $(\rho_j,\psi_j)$ for
all $j=1,\ldots,d$. Again this case was not previously considered in
\cite{NK14}, and moreover, some technical details were not adequately
addressed in \cite{NK14}. In Subsection~\ref{sec:ANOVA} we discuss the
ANOVA decomposition which provides the crucial steps to prove norm
equivalence in Subsection~\ref{sec:equiv} under the the stronger condition
\eqref{eq:strong} for all $(\rho_j,\psi_j)$.

\subsection{The reproducing kernel for $\calW_d$ exists under the weaker condition
\eqref{eq:weak}} \label{sec:kernel}

To verify the reproducing property of the kernel for $\calW_d$, we need a
multivariate extension of the property of \emph{absolute continuity} in
one dimension. An important property of the classical Sobolev spaces is
\emph{absolute continuity along lines}, which is the property that
functions in the first-order space $W^{1, 1}(\Omega) \coloneqq
\{f\in\calL^1(\Omega) : \partial^k f \in\calL^1(\Omega) \mbox{ for all }
k=1,\ldots,d\}$, for $\Omega \subseteq \R^d$ (with no weight functions or
weight parameters), are absolutely continuous along almost all line
segments parallel to the coordinate axes (see, e.g.,
\cite[Theorem~1.1.3/1]{Mazya11}). We show  here that this absolute
continuity along lines property also holds for functions in $\calW_d$. We
write $\bsx_{-k}\coloneqq\bsx_{\setD\setminus\{k\}}$ for brevity.

\begin{lemma}\label{lem:abs_cont}
Suppose that $f \in \calW_d$ and the condition \eqref{eq:weak} holds for all pairs
of weight functions $(\rho_j, \psi_j)$ for $j = 1, 2, \ldots, d$. Let $k \in \setD$.
Then for almost all $\bsx_{-k} \in \R^{d - 1}$
the univariate function $f(\cdot, \bsx_{-k}): \R \to \R$ is absolutely continuous on any compact interval.
\end{lemma}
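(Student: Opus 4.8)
The plan is to reduce the $d$-dimensional absolute-continuity statement to the $1$-dimensional facts already established (in particular Lemma~\ref{lem:lim1D} and the completeness of $\calW$) by fixing the $d-1$ inactive variables $\bsx_{-k}$ and studying the slice $g := f(\cdot, \bsx_{-k})$. The main issue is that $f\in\calW_d$ controls only the weighted $L^2$ norms of the \emph{integrated} derivatives $\int_{\R^{d-1}} \partial^\setu f(\bsx_\setu,\bsx_{-\setu})\,\bsrho_{-\setu}\,\rd\bsx_{-\setu}$, not the pointwise-in-$\bsx_{-k}$ behaviour of the slices. So the first step will be to extract, for almost every $\bsx_{-k}$, enough information about the slice to place it in a $1$-dimensional space to which the earlier theory applies.

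First I would isolate the two terms $\setu=\{k\}$ and $\setu=\emptyset$ in the $\calW_d$ norm. Finiteness of the $\setu=\{k\}$ term means
\[
 \int_{\R^{d-1}} \biggl(\int_{\R} \Bigl| \partial^{k} f(\bsx_k,\bsx_{-k})\Bigr|^2 \psi_k(x_k)\,\rd x_k \biggr)\, \prod_{j\ne k}\rho_j(x_j)\,\rd\bsx_{-k} \,<\,\infty
\]
is \emph{not} literally what is written, so more carefully: Tonelli's theorem applied to the (nonnegative) integrand $|\partial^k f(\bsx)|^2\,\psi_k(x_k)\,\bsrho_{-k}(\bsx_{-k})$, combined with the observation that the inner $\R^{d-1}$ integral with weight $\bsrho_{-k}$ dominates (by Jensen/Cauchy--Schwarz, exactly as in the displayed inequality used to show $\calH_d\hookrightarrow\calW_d$) the square of its average, will let me conclude that for almost every $\bsx_{-k}\in\R^{d-1}$ the univariate weak derivative $x_k\mapsto \partial^k f(x_k,\bsx_{-k})$ lies in $\calL^2_{\psi_k}(\R)$. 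Here I need the standard fact (Fubini for weak derivatives, as in \cite[Thm.~1.1.3/1]{Mazya11}) that for a.e.\ fixed $\bsx_{-k}$ the slice $g$ has a weak derivative equal a.e.\ to the slice of $\partial^k f$. Similarly, using $f\in\calL^1_\bsrho$ (which follows from $\calW_d\hookrightarrow$ something integrable, or more directly from the $\setu=\emptyset$ term via Tonelli), I get that $g\in\calL^1_{\rho_k}(\R)$ for a.e.\ $\bsx_{-k}$.

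Next, for each such good $\bsx_{-k}$, I have a univariate function $g$ that is locally integrable with $g\in\calL^1_{\rho_k}$ and $g'\in\calL^2_{\psi_k}$ — precisely the hypotheses of Lemma~\ref{lem:lim1D} — and hence $g$ is absolutely continuous on any compact interval (indeed this already follows from $g'$ being locally integrable, which holds because $1/\psi_k$ is locally integrable and $g'\in\calL^2_{\psi_k}$, via Cauchy--Schwarz exactly as in the proof of Lemma~\ref{lem:W_complete}; the Fundamental Theorem of Calculus then gives absolute continuity on compacta). The conclusion of the lemma follows for this $\bsx_{-k}$, and since the exceptional set has $(d-1)$-dimensional Lebesgue measure zero, the lemma holds for almost all $\bsx_{-k}$.

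The main obstacle I anticipate is the careful bookkeeping in the first step: verifying that ``$\partial^k f\in\calW_d$-norm finite'' really does transfer, slice by slice for a.e.\ $\bsx_{-k}$, to ``$\partial^k(f(\cdot,\bsx_{-k}))\in\calL^2_{\psi_k}$'', since the norm only sees the $\bsrho_{-k}$-average of the slice derivatives. The clean route is: (i) invoke the weak-derivative Fubini theorem to identify slice derivatives with slices of $\partial^k f$ a.e.; (ii) apply Tonelli to the genuinely pointwise-defined nonnegative function $|\partial^k f(\bsx)|^2\psi_k(x_k)$ after multiplying by $\bsrho_{-k}$ and noting $\bsrho_{-k}>0$ everywhere, so that finiteness of the $\bsrho_{-k}$-weighted $\R^{d-1}$-integral of $\int_\R|\partial^k f|^2\psi_k\,\rd x_k$ forces that inner integral to be finite for a.e.\ $\bsx_{-k}$; and (iii) handle the $\setu=\emptyset$ term the same way to get $g\in\calL^1_{\rho_k}$. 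Everything after that is a direct appeal to the one-dimensional results.
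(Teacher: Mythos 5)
There is a genuine gap at the first and most important step of your plan. From $f\in\calW_d$ the only information about $\partial^k f$ that you have is the $\setu=\{k\}$ and $\setu=\setD$-type terms of \eqref{eq:W-norm}, and in particular the $\setu=\{k\}$ term controls
\[
 \int_{\R}\bigg|\int_{\R^{d-1}}\partial^k f(x_k,\bsx_{-k})\,\bsrho_{-k}(\bsx_{-k})\,\rd\bsx_{-k}\bigg|^2\psi_k(x_k)\,\rd x_k ,
\]
i.e.\ the $\psi_k$-weighted $\calL^2$ norm of the \emph{$\bsrho_{-k}$-average} of the slice derivatives. Your argument ``the inner $\R^{d-1}$ integral dominates the square of its average, so Tonelli gives $\partial^k f(\cdot,\bsx_{-k})\in\calL^2_{\psi_k}$ for a.e.\ $\bsx_{-k}$'' uses Jensen/Cauchy--Schwarz in the wrong direction: that inequality shows the displayed ($\calW_d$) quantity is \emph{dominated by} $\int_{\R^d}|\partial^k f|^2\psi_k(x_k)\,\bsrho_{-k}(\bsx_{-k})\,\rd\bsx$, which is precisely the $\setu=\{k\}$ term of the $\calH_d$ norm \eqref{eq:H-norm}. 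Finiteness of the smaller quantity gives no control of the larger one, so the Tonelli step has no finite integral to start from. Worse, obtaining that slice property for a.e.\ $\bsx_{-k}$ is essentially the embedding $\calW_d\hookrightarrow\calH_d$, which is the main theorem of the paper: it requires the stronger condition \eqref{eq:strong} (the lemma assumes only \eqref{eq:weak}, under which the introduction's example $f=1/\sqrt{\rho}$ shows the embedding fails already for $d=1$), and its proof (Theorem~\ref{thm:equiv-d-dim} via the kernel of $\calW_d$) itself relies on the present lemma, so invoking anything of that strength here would be circular. The same wrong-direction issue does not affect your $\calL^1_{\rho_k}$ claim for the undifferentiated slice, which is fine, but Lemma~\ref{lem:lim1D} in any case concludes decay at $\pm\infty$, not absolute continuity, so it cannot carry the conclusion either (as you partly note yourself).

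The slice fact you actually need is much weaker and is what the paper uses: local integrability of $\partial^k f(\cdot,\bsx_{-k})$ on a compact interval $[a,b]$ for a.e.\ $\bsx_{-k}$, obtained from the $\setu=\{k\}$ term of \eqref{eq:W-norm} by Cauchy--Schwarz against $1/\psi_k$ on $[a,b]$ (first powers only, with the $\bsrho_{-k}$-average kept inside) together with Fubini. After that, the genuinely nontrivial content is the identification of $f(\cdot,\bsx_{-k})$, for a.e.\ $\bsx_{-k}$, with an antiderivative of $\partial^k f(\cdot,\bsx_{-k})$ plus a constant; this is exactly the ``Fubini for weak derivatives'' fact that you cite as a black box from \cite[Thm.~1.1.3/1]{Mazya11}, and the paper instead reproves it in this weighted, unbounded setting via test functions of product form $\chi_\ell(x_k)\xi(\bsx_{-k})$ and \cite[Sec.~1.1.3, Lemma]{Mazya11}. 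So as written your proposal both assumes an intermediate claim that does not follow from $f\in\calW_d$ (and is unnecessary for the conclusion) and outsources to the cited theorem precisely the step that constitutes the proof; to repair it, drop the $\calL^2_{\psi_k}$-slice claim, establish local integrability of the slice derivative as above, and then carry out (or carefully verify the applicability of) the Maz'ya-type identification of the slice with an absolutely continuous representative.
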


\begin{proof}
The proof follows the proof of \cite[Sec.~1.1.3, Theorem 1]{Mazya11} for
classical first order Sobolev spaces $W^{1, 1}$ almost exactly. We include
it here for completeness, and because of the added difficulties caused by
$\calW_d$ involving weight functions and dealing explicitly with an unbounded domain.

Let $[a, b] \subset \R$ be a compact interval (i.e., $-\infty < a < b <
\infty$). We first show that $f \in \calW_d$ implies that the univariate
function $\partial^k f(\cdot, \bsx_{-k})$ is integrable on
$[a,b]$ for almost all $\bsx_{-k} \in \R^{d - 1}$. First consider the
following integral, which we bound using Fubini's Theorem and the
Cauchy--Schwarz inequality,
\begin{align*}
&\bigg|\int_a^b\int_{\R^{d - 1}} \partial^k f(x_{k}, \bsx_{-k})\, \bsrho_{-k}(\bsx_{-k}) \,
\rd \bsx_{-k} \,\rd x_k \bigg|
\\
&\leq\,
\bigg(\int_a^b \frac{\rd x_k}{\psi_k(x_k)}\bigg)^{1/2}
\bigg(\int_a^b \bigg|\int_{\R^{d- 1}}
\partial^k f(x_k, \bsx_{-k}) \bsrho_{-k}(\bsx_{-k}) \, \rd \bsx_{-k}\bigg|^2
\psi_k(x_k) \, \rd x_k\bigg)^{1/2}
\\
&\leq\,
\bigg(\int_a^b\frac{\rd x_k}{\psi_k(x_k)}\bigg)^{1/2}
\bigg(\int_{-\infty}^\infty \bigg|\int_{\R^{d- 1}}
\partial^k f(x_k, \bsx_{-k}) \bsrho_{-k}(\bsx_{-k}) \, \rd \bsx_{-k}\bigg|^2
\psi_k(x_k) \, \rd x_k\bigg)^{1/2}
\\
&\leq\,
\bigg(\int_a^b \frac{\rd x_k}{\psi_k(x_k)}  \bigg)^{1/2}
    \gamma_{\{k\}}^{1/2}\,  \|f\|_{\calW_d}
\,<\, \infty,
\end{align*}
where the integral over $[a, b]$ is finite because $1/\psi_k$ is locally integrable.

Equivalently, we also have absolute integrability over $\R^{d- 1} \times [a, b]$
\[
  \int_a^b\int_{\R^{d - 1}}  \big| \partial^k f(x_{k}, \bsx_{-k})\big|\, \bsrho_{-k}(\bsx_{-k}) \,
  \rd \bsx_{-k} \,\rd x_k
 \,<\, \infty.
\]
Then, by Fubini's Theorem again
\[
\int_a^b \big| \partial^k f(x_k, \bsx_{-k})\big|\, \bsrho_{-k}(\bsx_{-k})  \, \rd x_k
\,<\, \infty
\quad \text{for almost all } \bsx_{-k} \in \R^{d - 1},
\]
which, since $\bsrho_{-k}(\bsx_{-k}) > 0$ is independent of $x_k$, in
turn implies that
\[
\int_a^b \big| \partial^k f(x_k, \bsx_{-k})\big|  \, \rd x_k
\,<\, \infty
\quad \text{for almost all } \bsx_{-k} \in \R^{d - 1}.
\]
Thus, $\partial^k f(\cdot, \bsx_{-k})$ is integrable on $[a, b]$ for
almost all $\bsx_{-k} \in \R^{d - 1}$.

Now for $\bsx_{-k}\in\bbR^{d-1}$ define the function
$h(\cdot,\bsx_{-k}) \in \calL^1[a, b]$ by
\[
h(t,\bsx_{-k}) \,\coloneqq\, \int_a^{t} \partial^k f(x_k, \bsx_{-k}) \, \rd x_k,
\]
which for almost all $\bsx_{-k} \in \R^{d - 1}$ is absolutely continuous
on $[a, b]$ because $\partial^k f(\cdot, \bsx_{-k})$ is integrable on $[a,
b]$. Differentiating $h(\cdot, \bsx_{-k})$, we see that the classical
derivative $(\partial/\partial t) h(\cdot, \bsx_{-k})$ is equal to the
weak derivative $\partial^k f(\cdot, \bsx_{-k})$ almost
everywhere on $[a, b]$.

In the remainder of the proof we show that $h(\cdot, \bsx_{-k})$ and
$f(\cdot, \bsx_{-k})$ differ only by a constant on $[a, b]$, which proves
the result. Let $\{\chi_\ell\}_{\ell = 1}^\infty$ be a sequence in
$C^\infty_0[a, b]$, which shall be specified later. Since
$h(\cdot,\bsx_{-k})$ is absolutely continuous and each $\chi_\ell$ has
compact support in $[a, b]$, by integration by parts
\[
  \int_a^b h(t,\bsx_{-k})\, \dd{}{}{t}\chi_\ell(t) \, \rd t
  \,=\, - \int_a^b \pd{}{}{t}h(t,\bsx_{-k})\, \chi_\ell(t) \, \rd t.
\]
Multiplying by an arbitrary $\xi \in C^\infty_0(\R^{d - 1})$ then
integrating over $\R^{d - 1}$ gives
\begin{align}
\label{eq:abs_cont1}
  &\int_{\R^{d - 1}}\int_a^b h(t,\bsx_{-k})\, \dd{}{}{t}\chi_\ell(t)\, \xi(\bsx_{-k}) \, \rd t\, \rd \bsx_{-k}
  \nonumber\\
  &=\, - \int_{\R^{d - 1}} \int_a^b \pd{}{}{t}h(t,\bsx_{-k})\, \chi_\ell(t)\, \xi(\bsx_{-k}) \, \rd t\,\rd \bsx_{-k},
\end{align}
which is finite since $\xi$ has compact support.

By the definition of the weak derivative \eqref{eq:weak_D} we also have
\[
  \int_{\R^d} f(\bsx)\, \dd{}{}{x_k} \chi_\ell(x_k)\, \xi(\bsx_{-k}) \, \rd \bsx
 \,=\, -\int_{\R^d} \partial^k f(\bsx)\, \chi_\ell(x_k)\, \xi(\bsx_{-k}) \, \rd \bsx,
\]
which, since $\chi_\ell$ has compact support in $[a, b]$, is equivalent to
\begin{align}
\label{eq:abs_cont2}
 &\int_{\R^{d -1}}\int_a^b f(t, \bsx_{-k})\, \dd{}{}{t} \chi_\ell(t)\, \xi(\bsx_{-k}) \, \rd t\,\rd\bsx_{-k}
 \nonumber\\
& =\, -\int_{\R^{d - 1}} \int_a^b \pd{}{}{t} h(t,\bsx_{-k})\, \chi_\ell(x_k)\, \xi(\bsx_{-k}) \, \rd t\,\rd \bsx_{-k},
\end{align}
where we have relabelled $x_k$ as $t$, and on the right hand side we have
also used the property that $(\partial/\partial t)h(\cdot,\bsx_{-k}) =
\partial^k f(\cdot, \bsx_{-k})$ for almost all $\bsx_{-k}
\in \R^{d - 1}$.

Subtracting \eqref{eq:abs_cont1} from \eqref{eq:abs_cont2} and using the
fact that $\xi$ was arbitrary we have
\[
 \int_a^b \big(f(t, \bsx_{-k}) - h(t,\bsx_{-k}) \big)\, \dd{}{}{t} \chi_\ell(t) \, \rd t \,=\, 0
 \quad \text{for almost all } \bsx_{-k} \in \R^{d - 1}.
\]
Then, applying \cite[Sec.~1.1.3, Lemma]{Mazya11}, scaled to the interval
$[a, b]$ and choosing $\{\chi_\ell\}$ as in this lemma, we have that for
almost all $\bsx_{-k} \in \R^{d - 1}$ there is a constant $c(\bsx_{-k})
\in \R$ such that
\[
f(t, \bsx_{-k}) \,=\, h(t,\bsx_{-k}) + c(\bsx_{-k}) \quad \text{for almost all } t \in [a, b].
\]
Hence, $f(\cdot, \bsx_{-k})$ is absolutely continuous on $[a, b]$, (or more specifically, is
equivalent to an absolutely continuous function on $[a,b]$).
\end{proof}

The next result we need is that the integrability of functions in
$\calW_d$ necessarily implies the following limits as one variable tends
to $\pm \infty$. The proof relies on the absolute continuity property
above.

\begin{lemma} \label{lem:lims}
Suppose that $f \in \calW_d$ and the condition \eqref{eq:weak} holds for
all pairs of weight functions $(\rho_j,\psi_j)$ for $j=1,\ldots d$. Then
for any $k\in\setD$,
\begin{align}
\label{eq:lim-a}
\lim_{x_k \to - \infty} f(\bsx)\,\Phi_k(x_k) \,&=\, 0
\qquad \text{for almost all } \bsx_{-k} \in \R^{d-1}, \qquad\mbox{and}\\
\label{eq:lim-b}
\lim_{x_k \to + \infty} f(\bsx)\,(1 - \Phi_k(x_k)) \,&=\, 0
\qquad \text{for almost all } \bsx_{-k} \in \R^{d-1}.
\end{align}
\end{lemma}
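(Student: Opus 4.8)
The plan is to deduce the limits from the one-dimensional Lemma~\ref{lem:lim1D}, applied to the slices $f(\cdot,\bsx_{-k})$. Fix $k\in\setD$. By Lemma~\ref{lem:abs_cont} there is a full-measure set of $\bsx_{-k}\in\R^{d-1}$ for which $f(\cdot,\bsx_{-k})$ is (equivalent to) an absolutely continuous function on every compact interval --- hence locally integrable --- whose classical derivative agrees almost everywhere with the weak derivative $\partial^k f(\cdot,\bsx_{-k})$. Since \eqref{eq:weak} holds for $(\rho_k,\psi_k)$ by assumption, to invoke Lemma~\ref{lem:lim1D} slicewise it suffices to show that, for almost all $\bsx_{-k}$,
\[
  \partial^k f(\cdot,\bsx_{-k})\in\calL^2_{\psi_k}(\R)
  \qquad\text{and}\qquad
  f(\cdot,\bsx_{-k})\in\calL^1_{\rho_k}(\R);
\]
for each such $\bsx_{-k}$, Lemma~\ref{lem:lim1D} then gives precisely \eqref{eq:lim-a} and \eqref{eq:lim-b}.

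The first of these is the crux, and it is here that the multivariate structure genuinely enters: the $\calW_d$-norm controls $\partial^k f$ only \emph{after} averaging against $\bsrho_{\setD\setminus\{k\}}$ (the $\setu=\{k\}$ term), so slicewise $\calL^2_{\psi_k}$-control is not available directly. A convenient remedy is to split $\partial^k f$ according to its ANOVA-type decomposition in the variables $\bsx_{-k}$,
\[
  \partial^k f(x_k,\bsx_{-k})\,=\,\sum_{\setw\subseteq\setD\setminus\{k\}} g_\setw(x_k,\bsx_\setw),
\]
where $g_\setw$ depends only on $x_k$ and $\bsx_\setw$, has vanishing $\rho_j$-mean in $x_j$ for every $j\in\setw$, and is obtained from the partial $\bsrho$-averages of $\partial^k f$ over the remaining variables by inclusion--exclusion. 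Differentiating the defining formula shows that $\partial^\setw g_\setw$ is the $\bsrho$-average of $\partial^{\setw\cup\{k\}}f$ over the variables outside $\setw\cup\{k\}$, which is exactly the quantity that the $\setu=\setw\cup\{k\}$ term of $\|f\|_{\calW_d}^2$ bounds in $\calL^2$ against the weight $\psi_k\,\bspsi_\setw$. For $\setw=\emptyset$ this already yields $g_\emptyset\in\calL^2_{\psi_k}(\R)$. For $\setw\neq\emptyset$ one recovers $g_\setw$ from $\partial^\setw g_\setw$ by applying, once for each $j\in\setw$, the one-variable identity from the proof of Lemma~\ref{lem:W_complete} (legitimate since $g_\setw$ is absolutely continuous along lines, as in Lemma~\ref{lem:abs_cont}); a Cauchy--Schwarz estimate using the finiteness of the integrals in \eqref{eq:weak} for each $(\rho_j,\psi_j)$, $j\in\setw$, combined with Minkowski's integral inequality to preserve the $x_k$-weight, then gives for almost all $\bsx_\setw$ a pointwise bound of the form
\[
  \int_\R |g_\setw(x_k,\bsx_\setw)|^2\,\psi_k(x_k)\,\rd x_k
  \,\le\, C_\setw(\bsx_\setw)\,\gamma_{\setw\cup\{k\}}\,\|f\|_{\calW_d}^2\,<\,\infty,
\]
with $C_\setw(\bsx_\setw)<\infty$ coming entirely from \eqref{eq:weak}. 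Since the sum is finite, $\partial^k f(\cdot,\bsx_{-k})\in\calL^2_{\psi_k}(\R)$ for almost all $\bsx_{-k}$.

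The second membership is then routine: for almost all $\bsx_{-k}$ and any fixed $a\in\R$ one has $f(x_k,\bsx_{-k})=f(a,\bsx_{-k})+\int_a^{x_k}\partial^k f(t,\bsx_{-k})\,\rd t$ (by the absolute continuity of Lemma~\ref{lem:abs_cont}, with $f(a,\bsx_{-k})$ finite for almost all $\bsx_{-k}$), and estimating the $\rho_k$-integral of the right-hand side exactly as in the proof of Lemma~\ref{lem:W_complete} --- split at $x_k=a$, Fubini, then Cauchy--Schwarz against $\Phi_k^2/\psi_k$ and $(1-\Phi_k)^2/\psi_k$, both integrable by \eqref{eq:weak} --- gives $\|f(\cdot,\bsx_{-k})\|_{\calL^1_{\rho_k}}\le|f(a,\bsx_{-k})|+c\,\|\partial^k f(\cdot,\bsx_{-k})\|_{\calL^2_{\psi_k}}<\infty$ (alternatively, this follows at once from $f\in\calL^1_{\bsrho}$, Fubini, and positivity of $\bsrho_{\setD\setminus\{k\}}$). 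Intersecting the finitely many full-measure sets used above and applying Lemma~\ref{lem:lim1D} to each remaining slice finishes the proof. The hard part is the $\calL^2_{\psi_k}$-step: upgrading the $\bsx_{-k}$-averaged control of $\partial^k f$ supplied by the $\calW_d$-norm to honest slicewise control, which forces one to use \emph{all} terms of the norm and not just the $\setu=\{k\}$ term, and is the reason the absolute-continuity Lemma~\ref{lem:abs_cont} is needed first. (An equivalent route is an induction on $d$ through a restriction statement ``$f\in\calW_d\Rightarrow f(\cdot,x_j)\in\calW_{d-1}$ for almost all $x_j$'', proved by the same mean-zero decomposition and fundamental-theorem-of-calculus estimates.)
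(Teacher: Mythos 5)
Your proposal is correct in strategy but follows a genuinely different route from the paper. The paper does \emph{not} reduce to the one-dimensional Lemma~\ref{lem:lim1D}: it reruns the 1-D contradiction argument directly in the multivariate setting, working with $F_k(\bsx)=f(\bsx)\Phi_k(x_k)$, establishing that $\partial^k F_k(\cdot,\bsx_{-k})$ is integrable on half-lines $(-\infty,c]$ for almost all $\bsx_{-k}$ (via the triangle inequality, a Cauchy--Schwarz estimate using only the \emph{averaged} derivative $\int_{\R^{d-1}}\partial^k f\,\bsrho_{-k}\,\rd\bsx_{-k}$ from the $\setu=\{k\}$ norm term, $f\in\calL^1_{\bsrho}$, and Fubini), and then using the absolute continuity from Lemma~\ref{lem:abs_cont} together with the divergence of $\int\rho_k/\Phi_k$ to contradict $f\in\calL^1_{\bsrho}$. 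You instead verify the hypotheses of Lemma~\ref{lem:lim1D} slicewise, which forces you to prove the stronger intermediate fact $\partial^k f(\cdot,\bsx_{-k})\in\calL^2_{\psi_k}$ for almost all $\bsx_{-k}$ --- something the paper never needs --- via an ANOVA-type decomposition of $\partial^k f$ in the variables $\bsx_{-k}$ and 1-D recovery estimates under \eqref{eq:weak}; this uses all the norm terms containing $k$ and is essentially the machinery the paper only deploys later (Lemma~\ref{lem:Df_in_Wu} and Subsection~\ref{sec:equiv}), and you correctly avoid circularity by invoking the elementary identity from Lemma~\ref{lem:W_complete} rather than the multivariate reproducing property. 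What each approach buys: the paper's argument is leaner, needing only two terms of the $\calW_d$-norm; yours yields the stronger and independently useful conclusion that the slices $f(\cdot,\bsx_{-k})$ lie in the one-dimensional space $\calW$, and cleanly recycles the 1-D lemma. Be aware that your sketch compresses some technicalities that would need spelling out --- the well-definedness and zero $\rho_j$-means of the partial averages defining $g_\setw$, absolute continuity of $g_\setw$ along $x_j$-lines (Lemma~\ref{lem:abs_cont} as stated applies to $f$, not to averaged derivatives, so you need a Lemma~\ref{lem:Df_in_Wu}-type restriction statement first), and the slicewise integrability needed to run the Lemma~\ref{lem:W_complete} identity coordinate by coordinate --- but these are fillable by arguments of the same kind already used in the paper, so the plan goes through.
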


\begin{proof}
Let $k\in\setD$. For notational convenience, we define $F_k :\R^d \to \R$ by
\[
F_k(\bsx) \,\coloneqq\, f(\bsx)\,\Phi_k(x_k).
\]
By Lemma~\ref{lem:abs_cont}, for almost all $\bsx_{-k} \in \R^{d -
1}$, $f(\cdot, \bsx_{-k})$ is absolutely continuous on any compact
interval. Clearly, $\Phi_k$ is absolutely continuous, and so it follows
that $F_k(\cdot, \bsx_{-k})$ is absolutely continuous on any compact
interval for almost all $\bsx_{-k} \in \R^{d -1 }$.

Now we follow a similar strategy to the proof of the one-dimensional case in
Lemma~\ref{lem:lim1D} and prove the first limit \eqref{eq:lim-a} by
contradiction. The second limit \eqref{eq:lim-b} then follows by an analogous argument.

First, we show that for any finite $c\in\bbR$ the function
$\partial^kF_k(\cdot, \bsx_{-k})$ is integrable on $(-\infty, c]$ for
almost all $\bsx_{-k} \in \R^{d-1}$. Then using the fact that
$F_k(\cdot, \bsx_{-k})$ is absolutely continuous, we show that the
premise that $\lim_{x_k \to -\infty} F_k(\bsx) \neq 0$ leads to a
contradiction.

Since $\Phi_k$ is the distribution function of $\rho_k$, the derivative of
$F_k$ with respect to $x_k$ is
\[
\partial^k F_k(\bsx) \,=\, f(\bsx)\, \rho_k(x_k) + \partial^k f(\bsx)\, \Phi_k(x_k).
\]
Consider first the following integral over $\R^{d-1} \times (-\infty,c]$
\begin{align*}
I \,\coloneqq&\,\, \bigg|\int_{\R^{d-1}} \int_{-\infty}^c \partial^k F_k(\bsx)\, \bsrho_{-k}(\bsx_{-k}) \,\rd x_k \rd \bsx_{-k}\bigg|
\\
\leq&\,
\underbrace{
\bigg|\int_{\R^{d-1}} \int_{-\infty}^c f(\bsx)\, \rho_k(x_k)\,\bsrho_{-k}(\bsx_{-k})\,\rd x_k\rd \bsx_{-k}\bigg|}_{I_1}
\\ &\qquad+
\underbrace{
\bigg|\int_{\R^{d-1}} \int_{-\infty}^c \partial^k f(\bsx)\, \Phi_k(x_k)\, \bsrho_{-k}(\bsx_{-k})\,\rd x_k \rd
\bsx_{-k}\bigg|}_{I_2},
\end{align*}
where we have used the triangle inequality. Since $\R^{d - 1} \times
(-\infty, c]\subset \R^{d}$, the finiteness of $|I_\bsrho(f)|$ also
implies that $I_1 < \infty$. For $I_2$ we can swap the order of the
integrals, multiply and divide by $(\psi_k(x_k))^{1/2}$, and then use the
Cauchy--Schwarz inequality to give the bound
\begin{align*}
 &I_2
\leq
 \int_{-\infty}^c \bigg|\int_{\R^{d-1}} \partial^k f(\bsx)\bsrho_{-k}(\bsx_{-k})\rd \bsx_{-k}\bigg|\,
 (\psi_k(x_k))^{1/2}\,\frac{\Phi_k(x_k)}{(\psi_k(x_k))^{1/2}} \,\rd x_k
 \\
&\leq \bigg( \int_{-\infty}^c
\bigg|\int_{\R^{d-1}} \!\partial^k f(\bsx)\bsrho_{-k}(\bsx_{-k})\rd\bsx_{-k}\bigg|^2
 \psi_k(x_k)\,\rd x_k \bigg)^{1/2}
 \bigg(\int_{-\infty}^c \!\frac{(\Phi_k(x_k))^2}{\psi_k(x_k)}\,\rd x_k \bigg)^{1/2}
\\
&<\,\infty,
\end{align*}
where finiteness for the two factors follows from $f \in \calW_d$ and the
condition \eqref{eq:weak}, respectively. Hence $I<\infty$, and since
$\bsrho_{-k} > 0$ we also have
\[
\int_{\R^{d-1}} \int_{-\infty}^c |\partial^k F_k(\bsx)|\, \bsrho_{-k}(\bsx_{-k})
\,\rd x_k \,\rd \bsx_{-k} \,<\, \infty.
\]
It then follows by Fubini's Theorem that
\[
\int_{-\infty}^c |\partial^k F_k(\bsx)|\bsrho_{-k}(\bsx_{-k})   \, \rd x_k
\quad \text{for almost all } \bsx_{-k} \in \R^{d - 1},
\]
and since $\bsrho_{-k} > 0$ is independent of $x_k$ we also have
\[
\int_{-\infty}^c |\partial^k F_k(\bsx)|\, \rd x_k \,<\, \infty
\quad \text{for almost all } \bsx_{-k} \in \R^{d - 1},
\]
so that $\partial^k F_k(\cdot, \bsx_{-k})$ is integrable on $(-\infty,
c]$.

Let now $\bsx_{-k} \in \R^{d-1}$ be such that $\partial^k F_k(\cdot,
\bsx_{-k})$ is integrable and $F_k(\cdot, \bsx_{-k})$ is absolutely
continuous, and suppose for a contradiction that $\lim_{t \to -\infty}
F_k(t, \bsx_{-k}) \neq 0$. Then there exists $\delta > 0$, $M \in \N$ and
a sequence $\{t_m\} \subset \R$ with $t_m \to -\infty$ as $m \to \infty$,
such that
\begin{equation*}
 |F_k(t_m, \bsx_{-k})| \,\geq\, \delta
\qquad \text{for all } m \geq M.
\end{equation*}
Assume also that $t_m \le c$ for all $m \geq M$.

For any $m \geq M$, let $x_k \in (-\infty, t_m)$ be arbitrary. Then since
$F_k(\cdot, \bsx_{-k})$ is absolutely continuous on $[x_k, t_m]$, by the
Fundamental Theorem of Calculus
\[
F_k(\bsx) \,=\, F_k(t_m, \bsx_{-k}) - \int_{x_k}^{t_m} \partial^k F_k(t, \bsx_{-k}) \,\rd t.
\]
The reverse triangle inequality then gives the lower bound
\[
|F_k(\bsx)| \,\geq\, |F_k(t_m, \bsx_{-k})| - \bigg|\int_{x_k}^{t_m} \partial^k F_k(t, \bsx_{-k}) \,\rd t\bigg|
\,\geq\, \delta - \int_{-\infty}^{t_m} |\partial^k F_k(t, \bsx_{-k})| \,\rd t .
\]
Since $\partial^k F_k(\cdot, \bsx_{-k})$ is integrable on $(-\infty, c]$,
we now choose $m \geq M$ such that
\[
\int_{-\infty}^{t_m} |\partial^k F_k(t, \bsx_{-k}) \,\rd t \,\leq\, \frac{\delta}{2}.
\]
Hence, we have the lower bound $|F_k(\bsx)| \geq \delta/2$ for all $x_k
\le t_m$, from which it follows that
\[
|f(\bsx)| \,\geq\, \frac{\delta}{2\Phi_k(x_k)} \qquad\mbox{for all}\quad x_k \le t_m.
\]

Since
\[
\bigg| \int_{\R^d} f(\bsx) \bsrho(\bsx)\, \rd \bsx \bigg|
\,\leq\, \sqrt{\gamma_\emptyset}\, \|f\|_{\calW_d} \,< \, \infty,
\]
we have $f \in\calL^1_{\bsrho}$, and hence
\begin{align*}
\norm{f}{\calL^1_{\bsrho}}
&\,\geq\,
\int_{\R^{d- 1}}\int_{-\infty}^{t_m} |f(\bsx)|\, \bsrho(\bsx) \,\rd x_k \rd \bsx_{-k}\\
&\,\geq\, \int_{\R^{d- 1}}\int_{-\infty}^{t_m} \frac{\delta}{2\Phi_k(x_k)}\,
\bsrho(\bsx) \,\rd x_k \rd \bsx_{-k} \\
&\,=\, \frac{\delta}{2} \int_{-\infty}^{t_m} \frac{\rho_k(x_k)}{\Phi_k(x_k)} \,\rd x_k
\,=\, \frac{\delta}{2} \int_{-\infty}^{t_m} \dd{}{}{x_k} [\log(\Phi_k(x_k))] \,\rd x_k.
\end{align*}
However, the last integral is again divergent, as can be seen from
\eqref{eq:div1D}.

This contradicts the fact that $f \in \calL^1_{\bsrho}$. Hence, we we must
have that
\[
\lim_{x_k \to -\infty} F_k(\bsx) \,=\, 0,
\]
as required.
\end{proof}

In the theorem below we will show that the space $\calW_d$ is a RKHS if
the condition \eqref{eq:weak} holds for all pairs of weight functions
$(\rho_j,\psi_j)$. Note that the norm \eqref{eq:W-norm} and inner product
\eqref{eq:W-inner} remain well defined whether or not the condition
\eqref{eq:weak} holds, but that the reproducing kernel is not well defined
if the condition \eqref{eq:weak} fails.

\begin{theorem}\label{thm:main}
If the condition \eqref{eq:weak} holds all pairs of weight functions
$(\rho_j,\psi_j)$ for $j=1,\ldots,d$, then the space $\calW_d$ with inner
product \eqref{eq:W-inner} is a reproducing kernel Hilbert space with
kernel
\begin{equation} \label{eq:kernel}
 K_d(\bsx,\bsy) \,\coloneqq\, \sum_{\setu\subseteq\setD} \gamma_\setu \prod_{j\in\setu} \eta_j(x_j,y_j),
\end{equation}
where
\begin{align} \label{eq:eta-k}
 \eta_j(x, y) \,\coloneqq\, &\int_{-\infty}^{\min(x, y)} \frac{(\Phi_j(t))^2}{\psi_j(t)} \,\rd t
 + \int_{\max(x,y)}^\infty \frac{(1 - \Phi_j(t))^2}{\psi_j(t)} \,\rd t
 \\\nonumber
 &- \int_{\min(x,y)}^{\max(x,y)} \frac{\Phi_j(t)(1 - \Phi_j(t))}{\psi_j(t)} \,\rd t.
\end{align}
\end{theorem}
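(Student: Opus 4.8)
The plan is to follow the template of the one-dimensional argument in Subsection~\ref{sec:kernel1D}, now exploiting the product form of $K_d$ together with Lemmas~\ref{lem:abs_cont} and~\ref{lem:lims}. We must check that (i) $K_d$ is symmetric and $K_d(\cdot,\bsy)\in\calW_d$ for every $\bsy\in\R^d$, and (ii) the reproducing identity $f(\bsy)=\langle f,K_d(\cdot,\bsy)\rangle_{\calW_d}$ holds for all $f\in\calW_d$ (using the continuous-along-lines representative of $f$ supplied by Lemma~\ref{lem:abs_cont}); boundedness of point evaluation then follows at once from $(f(\bsy))^2=\langle f,K_d(\cdot,\bsy)\rangle_{\calW_d}^2\le\|f\|_{\calW_d}^2\,K_d(\bsy,\bsy)$. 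The computation underlying both parts is the mixed derivative of the kernel: because each factor $\eta_j(x_j,y_j)$ depends only on $x_j$, for $\setv\subseteq\setD$ only the summands with $\setu\supseteq\setv$ survive, and, using the explicit one-dimensional formula \eqref{eq:Deta},
\[
  \partial^\setv K_d(\bsx,\bsy)
  \,=\,\sum_{\setv\subseteq\setu\subseteq\setD}\gamma_\setu\,
  \prod_{j\in\setv}\bigl(\partial_{x_j}\eta_j(x_j,y_j)\bigr)\prod_{j\in\setu\setminus\setv}\eta_j(x_j,y_j).
\]
Integrating this against $\bsrho_{\setD\setminus\setv}(\bsx_{\setD\setminus\setv})$ over the inactive variables and using the one-dimensional identity $\int_\R\eta_j(t,y_j)\,\rho_j(t)\,\rd t=0$ established in \eqref{eq:zero1D}, every term with $\setu\supsetneq\setv$ drops out, so that this inner integral equals simply $\gamma_\setv\prod_{j\in\setv}\partial_{x_j}\eta_j(x_j,y_j)$. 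For (i), inserting this into the $\setv$-block of the norm \eqref{eq:W-norm} and applying the one-dimensional identity \eqref{eq:intDeta}, i.e.\ $\int_\R|\partial_{x_j}\eta_j(x_j,y_j)|^2\psi_j(x_j)\,\rd x_j=\eta_j(y_j,y_j)$, each block collapses to $\gamma_\setv\prod_{j\in\setv}\eta_j(y_j,y_j)$; summing over $\setv$ gives $\|K_d(\cdot,\bsy)\|_{\calW_d}^2=K_d(\bsy,\bsy)$, finite since each $\eta_j(y_j,y_j)$ is finite under \eqref{eq:weak} exactly as in the one-dimensional case. Symmetry and local integrability of $K_d(\cdot,\bsy)$ are immediate since each $\eta_j$ is symmetric and continuous.

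For (ii), the substantive part, the same reduction gives $\langle f,K_d(\cdot,\bsy)\rangle_{\calW_d}=\sum_{\setv\subseteq\setD}T_\setv$ with $T_\setv=\int_{\R^{|\setv|}}g_\setv(\bsx_\setv)\prod_{j\in\setv}\bigl(\partial_{x_j}\eta_j(x_j,y_j)\,\psi_j(x_j)\bigr)\,\rd\bsx_\setv$ and $g_\setv(\bsx_\setv)=\int_{\R^{d-|\setv|}}\partial^\setv f(\bsx_\setv,\bsx_{\setD\setminus\setv})\,\bsrho_{\setD\setminus\setv}(\bsx_{\setD\setminus\setv})\,\rd\bsx_{\setD\setminus\setv}$. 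Writing $\bar{f}_\setw(\bsx_\setw):=\int_{\R^{d-|\setw|}}f(\bsx_\setw,\bsx_{\setD\setminus\setw})\,\bsrho_{\setD\setminus\setw}(\bsx_{\setD\setminus\setw})\,\rd\bsx_{\setD\setminus\setw}$, one first checks that $g_\setv=\partial^\setv\bar{f}_\setv$, i.e.\ that the weak derivative may be pulled through the integral over the inactive variables (this is where Lemma~\ref{lem:abs_cont} enters, together with Fubini's theorem justified by the Cauchy--Schwarz/\eqref{eq:weak} bounds used in the proofs of Lemmas~\ref{lem:abs_cont} and~\ref{lem:lims}). Then, since $\partial_{x_j}\eta_j(x_j,y_j)\,\psi_j(x_j)=\bbone_{(-\infty,y_j)}(x_j)\,\Phi_j(x_j)-\bbone_{(y_j,\infty)}(x_j)\,(1-\Phi_j(x_j))$, integrating by parts in $x_j$ over the two half-lines $(-\infty,y_j)$ and $(y_j,\infty)$ separately and discarding the boundary terms at $\pm\infty$ yields, for a function $h$ absolutely continuous along the $x_j$-line,
\[
  \int_\R \partial_{x_j} h(x_j)\,\bigl(\partial_{x_j}\eta_j(x_j,y_j)\,\psi_j(x_j)\bigr)\,\rd x_j
  \,=\, h(y_j)\,-\,\int_\R h(x_j)\,\rho_j(x_j)\,\rd x_j,
\]
the multivariate analogue of the integration by parts in \eqref{eq:repro1D}. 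Applying this to each $j\in\setv$ in turn --- each step trading one active variable $x_j$ for the value $y_j$ in a ``point-evaluation'' term and re-averaging $x_j$ against $\rho_j$ in the complementary term, the complementary averages reducing $\bar{f}_\setv$ to $\bar{f}_\setw$ for $\setw\subseteq\setv$ --- shows $T_\setv=\sum_{\setw\subseteq\setv}(-1)^{|\setv\setminus\setw|}\bar{f}_\setw(\bsy_\setw)$. Interchanging the order of summation and using that $\sum_{\setw\subseteq\setv\subseteq\setD}(-1)^{|\setv\setminus\setw|}$ equals $1$ if $\setw=\setD$ and $0$ otherwise, all terms cancel except $\bar{f}_\setD(\bsy)=f(\bsy)$, which is the reproducing property.

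I expect the main obstacle to lie in the bookkeeping in (ii): rigorously justifying the interchange $g_\setv=\partial^\setv\bar{f}_\setv$ and the analogous interchanges of the $\rho_j$-averages with lower-order derivatives, and --- most delicately --- the vanishing of the boundary terms at $\pm\infty$ after each integration by parts, since these involve not $f$ itself but its iterated averaged derivatives $\partial^{\setw}\bar{f}_\setv$, so Lemma~\ref{lem:lims} cannot be quoted verbatim and a one-dimensional limit argument in the spirit of Lemma~\ref{lem:lim1D} has to be run for each of these auxiliary functions (after verifying that they possess the requisite local integrability and $\calL^2_{\psi_j}$ control of the relevant derivative). Once these technical points are settled, the telescoping via the inclusion--exclusion identity is routine; this is presumably the part whose technical details were not adequately addressed in \cite{NK14}.
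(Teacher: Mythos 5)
Your proposal is correct in substance and shares the paper's core mechanism: part (i) is carried out exactly as in the paper (mixed derivative of $K_d$, reduction of the inner integral to $\gamma_\setv\prod_{j\in\setv}\partial_{x_j}\eta_j$ via the annihilation property \eqref{eq:zero1D}, then \eqref{eq:intDeta} to get $\|K_d(\cdot,\bsy)\|_{\calW_d}^2=K_d(\bsy,\bsy)$), and part (ii) rests on the same one-dimensional integration by parts driven by \eqref{eq:Deta} with boundary terms killed at $\pm\infty$. Where you genuinely diverge is in the organisation of part (ii) and, consequently, in which functions the boundary-limit argument must be applied to. The paper keeps the inactive variables inside the integral and integrates by parts in one active coordinate $x_k$ at the level of $f(\cdot,\bsx_{-k})$ itself, so Lemma~\ref{lem:lims} applies verbatim; the resulting recursion \eqref{eq:J_u_rec}, $J_\setu=(\text{term with }x_k\mapsto y_k)-J_{\setu\setminus\{k\}}$, is then telescoped coordinate-by-coordinate ($y_d,y_{d-1},\dots,y_1$), the cancellation being automatic at each stage. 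You instead average over the inactive variables first (asserting $g_\setv=\partial^\setv\bar f_\setv$ via a Leibniz interchange), integrate by parts on the iterated averaged derivatives $\partial^\setw\bar f_\setv$, expand each block into the inclusion--exclusion sum $\sum_{\setw\subseteq\setv}(-1)^{|\setv\setminus\setw|}\bar f_\setw(\bsy_\setw)$, and cancel by M\"obius summation --- combinatorially equivalent to the paper's telescoping, but it shifts the delicate point: the vanishing of boundary terms and the absolute continuity along lines must now be established for each auxiliary averaged function rather than quoted from Lemmas~\ref{lem:abs_cont} and~\ref{lem:lims}, exactly the obstacle you flag. This can be repaired (the averaged derivatives lie in lower-dimensional spaces of the same type, in the spirit of Lemma~\ref{lem:Df_in_Wu}, so one-dimensional arguments \`a la Lemma~\ref{lem:lim1D} go through), but the paper's ordering --- integrate by parts first, average afterwards --- avoids these extra lemmas entirely; note also that the paper is not free of Leibniz-type interchanges either (they are invoked to pull $\partial^{\setu\setminus\{k\}}$ past the $x_k$-integral in forming $I_k$ and in identifying the remainder with $J_{\setu\setminus\{k\}}$), so your interchange step is of the same nature as theirs, just applied one layer deeper.
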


\begin{proof}
The kernel is clearly symmetric, and it is bounded due to \eqref{eq:weak}.

To verify that $\calW_d$ is a RKHS with kernel given by \eqref{eq:kernel},
we have to show that (i) $K_d(\cdot,\bsy) \in \calW_d$ for all $\bsy \in
\R^d$, and (ii) $f(\bsy) = \langle f, K_d(\cdot,\bsy)\rangle_{\calW_d}$
for all $f \in \calW_d$ and $\bsy\in \R^d$.

First we consider the norm of $K_d(\cdot,\bsy)$ for $\bsy\in\bbR^d$:
\begin{align*}
 \|K_d(\cdot,\bsy)\|_{\calW_d}^2
 \,=\, \sum_{\setu\subseteq\setD} \frac{1}{\gamma_\setu}
 \int_{\R^{|\setu|}}
 \bigg| \int_{\R^{d-|\setu|}} \pd{|\setu|}{}{\bsx_\setu} K_d(\bsx, \bsy)
 \bsrho_{\setD\setminus\setu}(\bsx_{\setD\setminus\setu})\rd\bsx_{\setD\setminus\setu}\bigg|^2
 \bspsi_\setu(\bsx_\setu) \rd\bsx_{\setu},
\end{align*}
where we have from \eqref{eq:kernel} (with $\setu$ replaced by $\setv$)
\[
  K_d(\bsx,\bsy) \,=\, \sum_{\setv\subseteq\setD} \gamma_\setv \prod_{j\in\setv} \eta_j(x_j,y_j).
\]
For $\setu\subseteq\setD$, the weak derivative of $K_d(\bsx,\bsy)$ with
respect to $\bsx_\setu$ is
\begin{equation*}
\pd{|\setu|}{}{\bsx_\setu} K_d(\bsx, \bsy)
\,=\, \sum_{\setu\subseteq \setv \subseteq \setD}
\gamma_\setv \Bigg(\prod_{j \in \setu} \pd{}{}{x_j} \eta_j(x_j, y_j)\Bigg)
\Bigg(\prod_{j \in \setv\setminus \setu} \eta_j(x_j, y_j)\Bigg),
\end{equation*}
since if $\setu \not\subseteq \setv$ then the differentiation makes the
term vanish. In turn we have
\begin{align} \label{eq:int-diff}
 \int_{\R^{d-|\setu|}} \pd{|\setu|}{}{\bsx_\setu} K_d(\bsx, \bsy)\,
 \bsrho_{\setD\setminus\setu}(\bsx_{\setD\setminus\setu})\,\rd\bsx_{\setD\setminus\setu}
 \,=\, \gamma_\setu \prod_{j \in \setu} \pd{}{}{x_j} \eta_j(x_j, y_j),
\end{align}
since $\rho_j$ is a probability density, and if $\setv\ne\setu$ then the
property \eqref{eq:zero1D} makes the term vanish. This leads to
\begin{align*}
 \|K_d(\cdot,\bsy)\|_{\calW_d}^2
 &\,=\, \sum_{\setu\subseteq\setD} \gamma_\setu
  \prod_{j \in \setu} \int_{-\infty}^\infty \bigg(\pd{}{}{x_j} \eta_j(x_j, y_j)\bigg)^2\psi_j(x_j)\,\rd x_j
 \\
 &\,=\, \sum_{\setu\subseteq\setD} \gamma_\setu
  \prod_{j \in \setu} \eta_j(y_j,y_j) \,<\,\infty,
\end{align*}
where we used \eqref{eq:intDeta}. Hence $K_d(\cdot,\bsy)\in\calW_d$. This
completes the proof of (i).

For the reproducing property (ii), consider
\begin{align} \label{eq:reprod_a}
 \langle f, K_d(\cdot, \bsy)\rangle_{\calW_d}
 &\,=\, \sum_{\setu \subseteq\setD} \frac{1}{\gamma_\setu}
 \int_{\R^{|\setu|}} \bigg(\int_{\R^{d - |\setu|}} \partial^\setu f(\bsx)\,
 \bsrho_{\setD\setminus\setu}(\bsx_{\setD\setminus\setu})
 \,\rd\bsx_{\setD\setminus\setu} \bigg) \nonumber\\
 &\qquad \cdot
 \bigg(\int_{\R^{d- |\setu|}} \pd{|\setu|}{}{\bsx_\setu} K_d(\bsx, \bsy)\,
 \bsrho_{\setD\setminus\setu}(\bsx_{\setD\setminus\setu})
 \,\rd\bsx_{\setD\setminus\setu}
 \bigg)
 \bspsi_\setu(\bsx_\setu)\,\rd \bsx_\setu \nonumber\\
 &\,=\,
 \sum_{\setu \subseteq \setD} \int_{\R^d} \partial^\setu f(\bsx)
 \Bigg(\prod_{j \in \setu} \bigg(\pd{}{}{x_j} \eta_j(x_j, y_j)\, \psi_j(x_j)\bigg)\Bigg)
 \bsrho_{\setD\setminus\setu}(\bsx_{\setD\setminus\setu})\,\rd \bsx,
\end{align}
where we used \eqref{eq:int-diff}.

Consider now any $\setu \ne\emptyset$ and suppose $k\in\setu$.
By Fubini's Theorem and Leibniz's Theorem, we can interchange the
order of integrals and derivatives to write
\begin{align}
 J_\setu
 \coloneqq& \int_{\R^d} \partial^\setu f(\bsx) \Bigg(\prod_{j \in \setu} \bigg(\pd{}{}{x_j} \eta_j(x_j, y_j)\, \psi_j(x_j)\bigg)\Bigg)
       \bsrho_{\setD\setminus\setu}(\bsx_{\setD\setminus\setu})\,\rd \bsx
       \nonumber\\\nonumber
 =&
 \int_{\R^{d - 1}}
      \partial^{\setu\setminus\{k\}} I_k(\bsx_{-k})
      \Bigg(\prod_{j \in \setu\setminus\{k\}} \bigg( \pd{}{}{x_j} \eta_j(x_j, y_j)\, \psi_j(x_j)\bigg)\Bigg)
       \bsrho_{\setD\setminus\setu}(\bsx_{\setD\setminus\setu})\,\rd \bsx_{-k}, 
\end{align}
where
\begin{align*}
 I_k(\bsx_{-k})
 \,\coloneqq\, \lim_{R \to \infty} \int_{-R}^R \partial^k f(x_k,\bsx_{-k})\,
 \pd{}{}{x_k} \eta_k(x_k, y_k)\, \psi_k(x_k) \,\rd x_k .
\end{align*}

By Lemma~\ref{lem:abs_cont}, for almost all $\bsx_{-k} \in \R^{d - 1}$,
the function $f(\cdot,\bsx_{-k})$ is absolutely continuous on $[-R,
R]$. Using \eqref{eq:Deta} and integration by parts gives
\begin{align*}
&I_k(\bsx_{-k})
\\
&=\, \lim_{R \to \infty} \bigg(
\int_{-R}^{y_k} \partial^k f(x_k,\bsx_{-k})\, \Phi_k(x_k) \,\rd x_k
+ \int_{y_k}^{R} \partial^k f(x_k,\bsx_{-k})\, (\Phi_k(x_k) - 1) \,\rd x_k\bigg)
\\
&=\, \lim_{R \to \infty} \bigg( f(y_k,\bsx_{-k})\, \Phi_k(y_k) - f(-R,\bsx_{-k})\,\Phi_k(-R)
- \int_{-R}^{y_k}  f(x_k,\bsx_{-k})\, \rho_k(x_k) \,\rd x_k\\
&+ f(R,\bsx_{-k})\,(\Phi_k(R) - 1) - f(y_k,\bsx_{-k})\, (\Phi_k(y_k) - 1)
- \int_{y_k}^{R} f(x_k,\bsx_{-k})\, \rho_k(x_k) \,\rd x_k\bigg)
\\
&=\, f(y_k,\bsx_{-k}) - \lim_{R \to \infty} \int_{-R}^{R}
  f(x_k,\bsx_{-k})\, \rho_k(x_k) \,\rd x_k,
\end{align*}
where the boundary terms vanish as $R \to \infty$ due to
Lemma~\ref{lem:lims}. Hence, we can write
\begin{align} \label{eq:J_u_rec}
J_\setu
 = & \int_{\R^{d-1}} \!\partial^{\setu\setminus\{k\}} f(y_k,\bsx_{-k})
\Bigg(\prod_{j \in \setu \setminus \{k\}} \pd{}{}{x_j} \eta_j(x_j, y_j) \psi_j(x_j)\Bigg)
 \bsrho(\bsx_{\setD\setminus\setu}) \rd \bsx_{-k}
- J_{\setu\setminus\{k\}}.
\end{align}

Returning to the inner product \eqref{eq:reprod_a}, we split the sum
over whether $d \in \setu$ to write
\begin{align*}
\langle f, K_d(\cdot, \bsy)\rangle_{\calW_d}
\,&=\, \sum_{\setu \subseteq \setD} J_\setu
\,=\,\sum_{\setu \subseteq \{1:d - 1\}} J_{\setu \cup \{d\}}
+ \sum_{\setu \subseteq \{1:d - 1\}} J_\setu
\\
&=\,\sum_{\setu \subseteq \{1:d - 1\}} \big(J_{\setu \cup \{d\}} + J_\setu\big).
\end{align*}
Using the recursive formula \eqref{eq:J_u_rec} with $\setu$ replaced by
$\setu\cup\{d\}$ and $k$ replaced by $d$ gives
\begin{align*}
&\langle f, K_d(\cdot, \bsy)\rangle_{\calW_d}
\\
&=\, \sum_{\setu \subseteq \{1:d - 1\}} \int_{\R^{d-1}} \partial^{\setu} f(y_d,\bsx_{-d})
\Bigg(\prod_{j \in \setu} \bigg(\pd{}{}{x_j} \eta_j(x_j, y_j) \psi_j(x_j)\bigg)\Bigg)
\bsrho(\bsx_{\setD\setminus\setu}) \,\rd \bsx_{-d}.
\end{align*}
Hence we have ``reproduced'' the variable $y_d$.

Applying this procedure iteratively, we can see that the integral terms
will always cancel, until we have ``reproduced'' all of the variables
$y_d, y_{d - 1}, \ldots, y_1$, and are left only with
\[
\langle f, K_d(\cdot, \bsy)\rangle_{\calW_d} \,=\, f(\bsy),
\]
which holds for almost all $\bsy \in \R^d$.
\end{proof}

\subsection{The ANOVA decomposition} \label{sec:ANOVA}

Every $d$-variate function on $\bbR^d$ can be written as a sum of $2^d$
terms of the form
\[
  f(\bsx) \,=\, \sum_{\setu\subseteq\setD} f_\setu(\bsx_\setu),
\]
where each term $f_\setu$ depends only on the variables $\bsx_\setu$.
Obviously there are infinitely many ways to do this. One way to ensure
uniqueness of the decomposition for functions $f\in \calL^1_\bsrho$ is to
impose the ``annihilating condition'' that
\[
  \int_{-\infty}^\infty f_\setu(\bsx_\setu)\,\rho_j(x_j) \,=\, 0
  \quad\mbox{for all $j\in\setu$ whenever $\setu\ne\emptyset$},
\]
which leads to the ``ANOVA decomposition'', see e.g., \cite{KSWW10}. The
ANOVA terms can be expressed using a recursive formula, or an explicit
formula \cite{KSWW10}
\begin{equation} \label{eq:exp}
  f_\setu(\bsx_\setu)
  \,=\, \sum_{\setv\subseteq\setu} (-1)^{|\setu|-|\setv|}
  \int_{\bbR^{d - |\setv|}} f(\bsx)\, \bsrho_{\setD\setminus \setv} (\bsx_{\setD \setminus \setv})
  \,\rd\bsx_{\setD \setminus \setv}.
\end{equation}

ANOVA stands for ``ANalysis Of VAriance'' and is traditionally considered
for $\calL^2_\bsrho$ functions. If $f\in \calL^2_\bsrho$ then the ANOVA
terms are orthogonal
\[
  \int_{\bbR^d} f_\setu(\bsx_\setu)\,f_\setv(\bsx_\setu)\,\bsrho(\bsx)\,\rd\bsx \,=\, \delta_{\setu,\setv},
\]
and there is a nice decomposition for the $\calL^2_\bsrho$ norm and the
variance of $f$
\[
  \|f\|^2_{\calL^2_\bsrho} \,=\, \sum_{\setu\subseteq\setD} \|f_\setu\|^2_{\calL^2_\bsrho}
  \qquad\mbox{and}\qquad
  \sigma^2(f) \,=\, \sum_{\setu\subseteq\setD} \sigma^2(f_\setu),
\]
with $\sigma^2(f) \coloneqq I_\bsrho(f^2) - (I_\bsrho(f))^2$ and
$\sigma^2(f_\setu) = I_\bsrho(f_\setu^2)$ for $\setu\ne\emptyset$.

In the space $\calW_d$ the functions are in $\calL^1_\bsrho$ but not
necessarily in $\calL^2_\bsrho$. However, there is a nice decomposition
for the $\calW_d$ norm
\begin{equation}
\label{eq:norm-decomp}
  \|f\|^2_{\calW_d} \,=\, \sum_{\setu\subseteq\setD} \|f_\setu\|^2_{\calW_d},
\end{equation}
because the ANOVA terms are orthogonal with respect to the inner product
in $\calW_d$. This can be verified directly from the explicit formula
\eqref{eq:exp}, without the need for condition \eqref{eq:weak}. 
It also follows from the general result in \cite{KSWW10}
using the annihilating condition \eqref{eq:zero1D} of the kernel, 
which is well defined under condition \eqref{eq:weak}.

\begin{lemma}
If the condition \eqref{eq:strong} holds for all pairs of weight functions
$(\rho_j,\psi_j)$ for $j=1,\ldots,d$, then the space $\calW_d$ is embedded
in $\calL^2_\bsrho$, with
\[
  \|f\|_{\calL^2_\bsrho}^2 \,\le\,
  \bigg(\sum_{\setu\subseteq\setD} \gamma_\setu \prod_{j\in\setu} C(\rho_j,\psi_j)\bigg)\,\|f\|_{\calW_d}^2
 \qquad\mbox{for all}\quad f \in \calW_d,
\]
where $C(\rho_j,\psi_j)$ is defined in \eqref{eq:C-def}.
\end{lemma}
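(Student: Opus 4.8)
The plan is to mimic the $1$-dimensional argument of Lemma~\ref{lem:L2emb}, using the reproducing kernel of $\calW_d$. Since \eqref{eq:strong} implies \eqref{eq:weak} for every pair $(\rho_j,\psi_j)$, Theorem~\ref{thm:main} applies and $\calW_d$ is a reproducing kernel Hilbert space with kernel $K_d$ given by \eqref{eq:kernel}. Hence for any $f\in\calW_d$, the reproducing property (valid for almost all $\bsy\in\R^d$) together with the Cauchy--Schwarz inequality gives, pointwise a.e.,
\[
 |f(\bsy)|^2 \,=\, |\langle f, K_d(\cdot,\bsy)\rangle_{\calW_d}|^2 \,\le\, \|f\|_{\calW_d}^2\,\|K_d(\cdot,\bsy)\|_{\calW_d}^2 \,=\, \|f\|_{\calW_d}^2\,K_d(\bsy,\bsy),
\]
where the last equality uses symmetry and the reproducing property as in the $1$-dimensional case. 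Multiplying by $\bsrho(\bsy)$ and integrating over $\R^d$ then yields
\[
 \|f\|_{\calL^2_\bsrho}^2 \,=\, \int_{\R^d} |f(\bsy)|^2\,\bsrho(\bsy)\,\rd\bsy \,\le\, \|f\|_{\calW_d}^2 \int_{\R^d} K_d(\bsy,\bsy)\,\bsrho(\bsy)\,\rd\bsy.
\]

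It then remains to evaluate $\int_{\R^d} K_d(\bsy,\bsy)\,\bsrho(\bsy)\,\rd\bsy$. From \eqref{eq:kernel} we have $K_d(\bsy,\bsy)=\sum_{\setu\subseteq\setD}\gamma_\setu\prod_{j\in\setu}\eta_j(y_j,y_j)$, and every $\eta_j(y_j,y_j)$ is nonnegative, so Tonelli's theorem lets us interchange the (finite) sum over $\setu$ with the integral and factor $\bsrho(\bsy)=\prod_{j=1}^d\rho_j(y_j)$ across coordinates:
\[
 \int_{\R^d} K_d(\bsy,\bsy)\,\bsrho(\bsy)\,\rd\bsy
 \,=\, \sum_{\setu\subseteq\setD}\gamma_\setu
 \Bigg(\prod_{j\in\setu}\int_{-\infty}^\infty \eta_j(y,y)\,\rho_j(y)\,\rd y\Bigg)
 \Bigg(\prod_{j\in\setD\setminus\setu}\int_{-\infty}^\infty \rho_j(y)\,\rd y\Bigg).
\]
The factors over $j\in\setD\setminus\setu$ equal $1$ because each $\rho_j$ is a probability density. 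For the factors over $j\in\setu$, the computation already carried out in \eqref{eq:int-eta-yy}, applied with $(\rho,\psi)=(\rho_j,\psi_j)$, gives $\int_{-\infty}^\infty\eta_j(y,y)\,\rho_j(y)\,\rd y = C(\rho_j,\psi_j)$, which is finite since \eqref{eq:strong} is equivalent to \eqref{eq:C-def}. Combining, $\int_{\R^d} K_d(\bsy,\bsy)\,\bsrho(\bsy)\,\rd\bsy = \sum_{\setu\subseteq\setD}\gamma_\setu\prod_{j\in\setu} C(\rho_j,\psi_j)$, which is exactly the claimed constant.

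I do not expect a serious obstacle here: the work of establishing the RKHS structure and the $1$-dimensional diagonal integral has been done in Theorem~\ref{thm:main} and in \eqref{eq:int-eta-yy}. The only points needing a word of care are (i) that the reproducing identity holds only for almost every $\bsy$, which is enough since we only integrate $|f(\bsy)|^2\bsrho(\bsy)$; and (ii) the interchange of the finite sum and the integral, and the coordinatewise factorization, both of which are justified by Tonelli since all integrands involved are nonnegative. This is the cleanest route; an alternative via the ANOVA decomposition of Subsection~\ref{sec:ANOVA} would also work but is unnecessary for this particular bound.
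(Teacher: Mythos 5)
Your proposal is correct and follows essentially the same route as the paper's own proof: the reproducing property with Cauchy--Schwarz, the identity $\|K_d(\cdot,\bsy)\|_{\calW_d}^2 = K_d(\bsy,\bsy)$, and the coordinatewise evaluation of $\int_{\R^d} K_d(\bsy,\bsy)\,\bsrho(\bsy)\,\rd\bsy$ via \eqref{eq:int-eta-yy}. The paper simply states this last step more tersely, whereas you spell out the Tonelli factorization explicitly; there is no substantive difference.
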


\begin{proof}
For any $f\in\calW_d$, we use the reproducing property to write
\begin{align*}
  \|f\|_{\calL^2_\bsrho}^2
  &\,=\,\int_{\bbR^d} |f(\bsy)|^2\,\bsrho(\bsy)\,\rd \bsy
  \,=\, \int_{\bbR^d} |\langle f, K_d(\cdot,\bsy)\rangle_{\calW_d}|^2 \,\bsrho(\bsy)\,\rd \bsy
  \\
  &\,\le\, \int_{\bbR^d} \|f\|_\calW^2\,\|K_d(\cdot,\bsy)\|_{\calW_d}^2 \,\bsrho(\bsy)\,\rd \bsy,
\end{align*}
where
\[
 \|K_d(\cdot,\bsy)\|_{\calW_d}^2
 \,=\, \langle K_d(\cdot,\bsy),K_d(\cdot,\bsy)\rangle_{\calW_d}
 \,=\, K_d(\bsy,\bsy) = \sum_{\setu\subseteq\setD} \gamma_\setu \prod_{j\in\setu} \eta_j(y_j,y_j).
\]
The result now follows from \eqref{eq:int-eta-yy}.
\end{proof}

In the next subsection we will establish the norm equivalence between
$\calW_d$ and $\calH_d$. As in the one dimensional case we will make use
of the reproducing property. However, for each term in \eqref{eq:H-norm}
we only want to ``reproduce'' the variables that are \emph{not}
differentiated.

To this end, analogously to the space $\calW_d$, for $\setu
\subseteq\setD$ we introduce the space $\calW_\setu$ which we define to be
the unanchored ANOVA space of functions on $\R^{|\setu|}$ that only depend
on the variables~$\bsx_\setu$. This space is a reproducing kernel Hilbert
space with inner product
\begin{align}
\langle g, \widetilde{g}\rangle_{\calW_\setu}
\,\coloneqq\,
\sum_{\setv \subseteq \setu} \frac{1}{\gammatilde_\setv}
\int_{\R^{|\setv|}} \bigg(&\int_{\R^{|\setu| - |\setv|}}
\partial^\setv g(\bsx_\setv, \bsx_{\setu \setminus \setv})\,
\bsrho_{\setu \setminus \setv}(\bsx_{\setu \setminus \setv}) \, \rd \bsx_{\setu \setminus \setv}\bigg)
\nonumber\\\nonumber
\cdot\bigg(&\int_{\R^{|\setu| - |\setv|}}
\partial^\setv \widetilde{g}(\bsx_\setv, \bsx_{\setu \setminus \setv})\,
\bsrho_{\setu \setminus \setv} (\bsx_{\setu \setminus \setv}) \, \rd \bsx_{\setu \setminus \setv}\bigg)
\bspsi_\setv(\bsx_\setv) \, \rd \bsx_\setv,
\end{align}
induced norm
\begin{equation*}
\|g\|_{\calW_\setu}^2
\,=\,
\sum_{\setv \subseteq \setu} \frac{1}{\gammatilde_\setv}
\int_{\R^{|\setv|}} \bigg|\int_{\R^{|\setu| - |\setv|}}
\partial^\setv g(\bsx_\setv, \bsx_{\setu \setminus \setv})\,
\bsrho_{\setu \setminus \setv}(\bsx_{\setu \setminus \setv}) \, \rd \bsx_{\setu \setminus \setv}
\bigg|^2
\bspsi_\setv(\bsx_\setv) \, \rd \bsx_\setv,
\end{equation*}
and kernel $K_{\setu} : \R^{|\setu|} \times \R^{|\setu|} \to \R$ given by
\begin{equation*}
K_{\setu}(\bsx_\setu,\bsy_\setu) \,\coloneqq\, \sum_{\setv \subseteq \setu} \gammatilde_\setv
\prod_{j \in \setv} \eta_j(x_j, y_j).
\end{equation*}
Note that the case $\setu = \setD$ gives our original space $\calW_d$.

Analogous to the space $\calW_d$, every $g \in \calW_\setu$ admits an
ANOVA decomposition
\begin{equation*}
  g(\bsx_\setu) \,=\, \sum_{\setv\subseteq \setu } g_\setv(\bsx_\setv),
\end{equation*}
where each $g_\setv \in \calW_{\setv}$ depends only on the variables
$\bsx_\setv$, and is given explicitly by
\begin{equation}
\label{eq:explicit}
g_\setv(\bsx_\setv)
  \,=\, \sum_{\setw\subseteq\setv} (-1)^{|\setv|-|\setw|}
  \int_{\bbR^{|\setu| - |\setw|}} g(\bsx_\setw,\bsx_{\setu  \setminus \setw})
  \bsrho_{\setu \setminus \setw} (\bsx_{\setu \setminus \setw})\,\rd\bsx_{\setu \setminus \setw}.
\end{equation}
The ANOVA decomposition is orthogonal in $\calW_\setu$, and orthogonal in
$\calL^2_{\bsrho_\setu}$ provided $\calW_\setu$ is embedded in
$\calL^2_{\bsrho_\setu}$ (which is the case if \eqref{eq:strong} holds for
all pairs $(\rho_j, \psi_j)$ for $j \in \setu$):
\begin{equation}
\label{eq:anova-orthog}
\|g\|_{\calW_\setu}^2 \,=\, \sum_{\setv\subseteq\setu} \|g_\setv\|_{\calW_\setu}^2,
 \qquad
\|g\|_{\calL^2_{\bsrho}}^2 \,=\,
    \sum_{\setv\subseteq\setu} \|g_\setv\|_{\calL^2_{\bsrho_\setu}}^2,
\end{equation}
and the decomposition terms also satisfy the useful property
\begin{equation}
\int_{-\infty}^\infty g_\setv(\bsx_\setv)\,\rho_j(x_j)\,\rd x_j \,=\, 0
  \quad\mbox{for any } j\in\setv\ne\emptyset .\label{eq:kill}
\end{equation}
Moreover, the property \eqref{eq:kill} implies that
\begin{align}
\label{eq:g_v_norm}
 \|g_\setv\|_{\calW_\setu}^2 \,=\,
 \|g_\setv\|_{\calW_\setv}^2 \,&=\,
 \frac{1}{\gammatilde_\setv} \int_{\R^{|\setv|}}
 |\partial^\setv g_\setv(\bsx_\setv)|^2 \bspsi_\setv(\bsx_\setv) \, \rd \bsx_\setv,
\quad \text{and}\\
\label{eq:g_v_reprod}
g_\setv(\bsy_\setv) \,&=\,
\bigg\langle g_\setv, \gamma_\setv \prod_{j \in \setv} \eta_j(\cdot, y_j)\bigg\rangle_{\calW_\setv},
\end{align}
and we have also
\begin{align} \label{eq:norm-eta}
  \Bigg\|\prod_{j\in\setv} \eta_j(\cdot,y_j)\Bigg\|_{\calW_\setv}^2
  \,=\, \frac{1}{\gamma_\setv} \prod_{j\in\setv} \eta_j(y_j,y_j).
\end{align}

\subsection{Norm equivalence in $\calH_d$ and $\calW_d$ under the stronger condition
\eqref{eq:strong}} \label{sec:equiv}

To prove the general norm equivalence we require the following technical
lemma. The basic idea is that if we differentiate some function $f \in
\calW_d$ with respect to the variables $\bsx_\setu$ and then treat those
$|\setu|$ variables as fixed, then that function also belongs to the
weighted unanchored ANOVA space in $(d - |\setu|)$ dimensions.

\begin{lemma} \label{lem:Df_in_Wu}
If $f \in \calW_d$ then for each $\setu \subseteq\setD$ and almost all
$\bsx_{\setu} \in \R^{|\setu|}$, $\partial^{\setu} f(\bsx_{\setu}, \cdot)
\in \calW_{\setD\setminus\setu}$.
\end{lemma}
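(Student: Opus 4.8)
The plan is to prove, for a fixed $\setu\subseteq\setD$, the two ingredients that together give membership of the slice in $\calW_{\setD\setminus\setu}$: first, that for almost every $\bsx_\setu\in\R^{|\setu|}$ the function $h\coloneqq\partial^\setu f(\bsx_\setu,\cdot)$ (for a fixed locally integrable representative of $\partial^\setu f$) is locally integrable on $\R^{d-|\setu|}$ and has weak derivatives $\partial^\setw h=\partial^{\setu\cup\setw}f(\bsx_\setu,\cdot)$ for every $\setw\subseteq\setD\setminus\setu$; and second, that with these identifications $\|h\|_{\calW_{\setD\setminus\setu}}^2<\infty$. Observe first that since $f\in\calW_d$ every weak derivative $\partial^\setv f$ with $\setv\subseteq\setD$ exists and is locally integrable on $\R^d$, and from \eqref{eq:weak_D} one checks that $\partial^\setv f=\partial^{\setv\setminus\setu}(\partial^\setu f)$ whenever $\setu\subseteq\setv$; in particular $\partial^{\setu\cup\setw}f=\partial^\setw(\partial^\setu f)$ for $\setw\subseteq\setD\setminus\setu$.

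The first ingredient is a multivariate, localised version of the absolute-continuity-on-lines property used in Lemma~\ref{lem:abs_cont} (cf.~\cite[Sec.~1.1.3]{Mazya11}), now applied to $\partial^\setu f$ and in all of the directions $\setD\setminus\setu$ simultaneously. The way I would argue it: fix $\setw\subseteq\setD\setminus\setu$ and test functions $w\in\calC^\infty_0(\R^{|\setu|})$, $\varphi\in\calC^\infty_0(\R^{d-|\setu|})$; applying \eqref{eq:weak_D} to $f$ with the product test function $w(\bsx_\setu)\,\partial^\setw\varphi(\bsx_{\setD\setminus\setu})$ (invoking the weak derivative $\partial^\setu f$) and with $w(\bsx_\setu)\,\varphi(\bsx_{\setD\setminus\setu})$ (invoking $\partial^{\setu\cup\setw}f$) produces in both cases $\pm\int_{\R^d}f(\bsx)\,\partial^\setu w(\bsx_\setu)\,\partial^\setw\varphi(\bsx_{\setD\setminus\setu})\,\rd\bsx$; splitting the integral over $\R^d$ by Fubini's theorem (all integrands having compact support) therefore yields
\[
\int_{\R^{|\setu|}}w(\bsx_\setu)\,\Delta_{\varphi,\setw}(\bsx_\setu)\,\rd\bsx_\setu\,=\,0\qquad\text{for all }w\in\calC^\infty_0(\R^{|\setu|}),
\]
where
\begin{align*}
\Delta_{\varphi,\setw}(\bsx_\setu)\,\coloneqq\,&\int_{\R^{d-|\setu|}}\partial^\setu f(\bsx_\setu,\bsx_{\setD\setminus\setu})\,\partial^\setw\varphi(\bsx_{\setD\setminus\setu})\,\rd\bsx_{\setD\setminus\setu}\\
&-(-1)^{|\setw|}\int_{\R^{d-|\setu|}}\partial^{\setu\cup\setw}f(\bsx_\setu,\bsx_{\setD\setminus\setu})\,\varphi(\bsx_{\setD\setminus\setu})\,\rd\bsx_{\setD\setminus\setu}
\end{align*}
is a locally integrable function of $\bsx_\setu$; hence $\Delta_{\varphi,\setw}=0$ almost everywhere. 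Running $\varphi$ over a countable dense family in $\calC^\infty_0(\R^{d-|\setu|})$ and $\setw$ over the finitely many subsets of $\setD\setminus\setu$, intersecting the (countably many) resulting null sets, and using that $\partial^\setv f(\bsx_\setu,\cdot)$ is locally integrable for almost every $\bsx_\setu$ (Fubini again, which also licenses passing to the limit in $\varphi$), one obtains the identities $\partial^\setw h=\partial^{\setu\cup\setw}f(\bsx_\setu,\cdot)$ for almost all $\bsx_\setu$ and all $\setw\subseteq\setD\setminus\setu$. This measure-theoretic bookkeeping — differentiating in several directions while fixing several variables for a function only known to lie in $\calW_d$ — is the step I expect to be the main obstacle.

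Given the first ingredient, the second is a short Tonelli computation. Writing $\|h\|_{\calW_{\setD\setminus\setu}}^2$ out term by term (the $\setw$-term involving $\partial^\setw h=\partial^{\setu\cup\setw}f(\bsx_\setu,\cdot)$), using $\bspsi_\setu(\bsx_\setu)\,\bspsi_\setw(\bsx_\setw)=\bspsi_{\setu\cup\setw}(\bsx_{\setu\cup\setw})$ for disjoint $\setu,\setw$, and re-indexing the outer sum by $\setv=\setu\cup\setw$, one obtains
\begin{align*}
&\int_{\R^{|\setu|}}\bspsi_\setu(\bsx_\setu)\,\|h\|_{\calW_{\setD\setminus\setu}}^2\,\rd\bsx_\setu\\
&\quad=\,\sum_{\setu\subseteq\setv\subseteq\setD}\frac{\gamma_\setv}{\gamma_{\setv\setminus\setu}}\cdot\frac{1}{\gamma_\setv}\int_{\R^{|\setv|}}\bigg|\int_{\R^{d-|\setv|}}\partial^\setv f(\bsx_\setv,\bsx_{\setD\setminus\setv})\,\bsrho_{\setD\setminus\setv}(\bsx_{\setD\setminus\setv})\,\rd\bsx_{\setD\setminus\setv}\bigg|^2\bspsi_\setv(\bsx_\setv)\,\rd\bsx_\setv,
\end{align*}
which is bounded by $\big(\max_{\setu\subseteq\setv\subseteq\setD}\gamma_\setv/\gamma_{\setv\setminus\setu}\big)\,\|f\|_{\calW_d}^2<\infty$. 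Consequently $\bspsi_\setu(\bsx_\setu)\,\|h\|_{\calW_{\setD\setminus\setu}}^2$ is finite for almost every $\bsx_\setu$, and since $\bspsi_\setu>0$ everywhere this gives $\|h\|_{\calW_{\setD\setminus\setu}}^2<\infty$ for almost every $\bsx_\setu$. Intersecting this full-measure set with the one from the first ingredient yields $\partial^\setu f(\bsx_\setu,\cdot)\in\calW_{\setD\setminus\setu}$ for almost all $\bsx_\setu$, as required.
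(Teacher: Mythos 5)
Your proof is correct and its decisive step---integrating $\bspsi_\setu(\bsx_\setu)\,\|\partial^\setu f(\bsx_\setu,\cdot)\|_{\calW_{\setD\setminus\setu}}^2$ over $\bsx_\setu$, recognising each term as a term of $\|f\|_{\calW_d}^2$ up to the weight ratio $\gamma_\setv/\gamma_{\setv\setminus\setu}$, and deducing almost-everywhere finiteness from integrability together with the strict positivity of $\bspsi_\setu$---is exactly the paper's argument, which performs the same comparison term by term (thereby avoiding even the $\max$ factor, though that makes no difference for a.e.\ finiteness). Your first ingredient, the test-function/Fubini verification that the weak derivatives of the slice coincide with slices of $\partial^{\setu\cup\setw}f$ for almost every $\bsx_\setu$, is a point the paper passes over silently when it writes the slice norm directly in terms of $\partial^{\setu\cup\setv}f$, so spelling it out adds rigour but does not change the route.
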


\begin{proof}
For $f\in\calW_d$, $\setu\subseteq\setD$ and
$\bsx_\setu\in\bbR^{|\setu|}$, we write
\begin{equation} \label{eq:H-sum}
 \|\partial^{\setu} f(\bsx_{\setu},
\cdot)\|_{\calW_{\setD\setminus \setu}}^2 \,=\, \sum_{\setv \subseteq
\setD\setminus \setu} \frac{1}{\gamma_\setv} h_\setv(\bsx_{\setu}),
\end{equation}
where for each $\setv \subseteq \setD\setminus \setu$ we define the
function $h_\setv : \R^{|\setu|} \to \R$ by
\begin{align*}
& h_\setv(\bsx_{\setu}) \coloneqq
 \\
 &\int_{\R^{|\setv|}} \!\bigg| \!\int_{\R^{d - |\setu| - |\setv|}}
 \!\!\!\partial^{\setu \cup \setv} f(\bsx_\setu, \bsx_{\setv}, \bsx_{\setD\setminus(\setu \cup \setv)})\,
 \bsrho_{\setD\setminus(\setu \cup \setv)}(\bsx_{\setD\setminus(\setu \cup \setv)})
 \,\rd \bsx_{\setD\setminus(\setu \cup \setv)}\bigg|^2
 \bspsi_{\setv}(\bsx_{\setv}) \rd \bsx_{\setv}.
\end{align*}
We have
\begin{align*}
&\int_{\R^{|\setu|}} |h_\setv(\bsx_{\setu})\,\bspsi_{\setu}(\bsx_{\setu})| \, \rd \bsx_{\setu}
\\
&=\,
\int_{\R^{|\setu| + |\setv|}} \bigg|\int_{\R^{d - |\setu| - |\setv|}}
\partial^{\setu \cup \setv} f(\bsx_\setu, \bsx_{\setv}, \bsx_{\setD\setminus(\setu \cup \setv)})
\bsrho_{\setD\setminus(\setu \cup \setv)}(\bsx_{\setD\setminus(\setu \cup \setv)})
\,\rd \bsx_{\setD\setminus(\setu \cup \setv)}
\bigg|^2
\\
&\qquad\qquad \qquad \cdot\bspsi_{\setu \cup \setv}(\bsx_{\setu \cup \setv})
 \,\rd \bsx_{\setu \cup \setv}
\\
&\leq\, \gamma_{\setu \cup \setv}\,
\|f\|_{\calW_d}^2
\,<\,\infty,
\end{align*}
where we recognised that the integral expression corresponds to one of the
terms in the norm $\|f\|_{\calW_d}^2$, see~\eqref{eq:W-norm}. Thus
$h_\setv \bspsi_{\setu}$ is integrable on $\R^{|\setu|}$. It then follows
that $h_\setv \bspsi_{\setu} = |h_\setv \bspsi_{\setu}| <  \infty$ almost
everywhere on $\R^{ |\setu|}$, and since each $\psi_j$ is strictly
positive, we also have that $h_\setv = |h_\setv| < \infty$ almost
everywhere. Hence by \eqref{eq:H-sum}, we have that $\|\partial^{\setu}
f(\bsx_{\setu}, \cdot)\|_{\calW_{\setD\setminus \setu}}$ is finite for
almost all $\bsx_{\setu} \in \R^{ |\setu|}$.
\end{proof}

\begin{theorem}
\label{thm:equiv-d-dim}
If the condition \eqref{eq:strong} holds for all pairs of weight functions
$(\rho_j,\psi_j)$ for $j=1,\ldots,d$, then the spaces $\calH_d$ and
$\calW_d$ are equivalent, and
\begin{equation}
\label{eq:equiv-d-dim}
 \|f\|_{\calW_d}^2
 \,\leq\, \|f\|_{\calH_d}^2
 \,\leq\,
 \bigg(\max_{\setv \subseteq\setD} \sum_{\setw \subseteq \setv} \frac{\gamma_\setv}{\gamma_{\setv \setminus \setw}}
 \prod_{j \in \setw} C(\rho_j, \psi_j)\bigg)\,
 \|f\|_{\calW_d}^2,
\end{equation}
where $C(\rho_j, \psi_j)$ is defined in \eqref{eq:C-def}.
\end{theorem}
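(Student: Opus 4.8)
The plan is to imitate the one-dimensional argument of Theorem~\ref{thm:equiv1D}: the left-hand inequality is the Cauchy--Schwarz estimate already recorded above, while for the right-hand inequality I would decompose $f\in\calW_d$ into its ANOVA terms $f=\sum_{\setv\subseteq\setD}f_\setv$ (Subsection~\ref{sec:ANOVA}) and reduce to a per-term estimate. The first step is to observe that the ANOVA decomposition is orthogonal in $\calH_d$ as well as in $\calW_d$: for $\setv\neq\setv'$ every summand $\int_{\R^d}\partial^\setu f_\setv\,\partial^\setu f_{\setv'}\,\bspsi_\setu(\bsx_\setu)\,\bsrho_{\setD\setminus\setu}(\bsx_{\setD\setminus\setu})\,\rd\bsx$ of $\langle f_\setv,f_{\setv'}\rangle_{\calH_d}$ vanishes, since it is nonzero only when $\setu\subseteq\setv\cap\setv'$, and then, choosing $j$ in the nonempty symmetric difference of $\setv$ and $\setv'$ (say $j\in\setv\setminus\setv'$, so $j\notin\setu$ and $\partial^\setu f_{\setv'}$ does not depend on $x_j$), the inner integral over $x_j$ against the density $\rho_j$ equals $\partial^\setu\big(\int_{\R}f_\setv\,\rho_j\,\rd x_j\big)=0$ by the annihilating property \eqref{eq:kill}. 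Consequently $\|f\|_{\calH_d}^2=\sum_{\setv\subseteq\setD}\|f_\setv\|_{\calH_d}^2$, and since $\|f\|_{\calW_d}^2=\sum_{\setv\subseteq\setD}\|f_\setv\|_{\calW_d}^2$ by \eqref{eq:norm-decomp}, it suffices to prove
\[
 \|f_\setv\|_{\calH_d}^2 \,\le\, \Big(\sum_{\setw\subseteq\setv}\tfrac{\gamma_\setv}{\gamma_{\setv\setminus\setw}}\prod_{j\in\setw}C(\rho_j,\psi_j)\Big)\,\|f_\setv\|_{\calW_d}^2
 \qquad\text{for every }\setv\subseteq\setD .
\]

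To establish this per-term estimate, fix $\setv$ and note that in $\|f_\setv\|_{\calH_d}^2$ only the derivative indices $\setu\subseteq\setv$ contribute, and that $\partial^\setu f_\setv$ depends only on $\bsx_\setv$; integrating out the variables $\bsx_{\setD\setminus\setv}$ (whose densities integrate to $1$) gives
\[
 \|f_\setv\|_{\calH_d}^2 \,=\, \sum_{\setu\subseteq\setv}\frac{1}{\gamma_\setu}\int_{\R^{|\setu|}} \big\|\partial^\setu f_\setv(\bsx_\setu,\cdot)\big\|_{\calL^2_{\bsrho_{\setv\setminus\setu}}}^2\,\bspsi_\setu(\bsx_\setu)\,\rd\bsx_\setu .
\]
For almost every $\bsx_\setu$, Lemma~\ref{lem:Df_in_Wu} applied to $f_\setv\in\calW_d$ shows $\partial^\setu f_\setv(\bsx_\setu,\cdot)\in\calW_{\setD\setminus\setu}$; since it is independent of $\bsx_{\setD\setminus\setv}$ and all its ANOVA terms indexed by proper subsets of $\setv\setminus\setu$ vanish — again by \eqref{eq:kill}, using that $\partial^i$ commutes with $\int_\R\cdot\,\rho_j\,\rd x_j$ for $i\neq j$ — it is a ``top-level'' member of $\calW_{\setv\setminus\setu}$, to which \eqref{eq:g_v_norm}, \eqref{eq:g_v_reprod} and \eqref{eq:norm-eta} apply. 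The reproducing property together with Cauchy--Schwarz then gives, for almost all $\bsy_{\setv\setminus\setu}$,
\[
 \big|\partial^\setu f_\setv(\bsx_\setu,\bsy_{\setv\setminus\setu})\big|^2 \,\le\, \gamma_{\setv\setminus\setu}\,\big\|\partial^\setu f_\setv(\bsx_\setu,\cdot)\big\|_{\calW_{\setv\setminus\setu}}^2\prod_{j\in\setv\setminus\setu}\eta_j(y_j,y_j),
\]
and integrating against $\bsrho_{\setv\setminus\setu}$, using $\int_{-\infty}^\infty\eta_j(y,y)\rho_j(y)\,\rd y=C(\rho_j,\psi_j)$ from \eqref{eq:int-eta-yy} and then \eqref{eq:g_v_norm}, yields
\[
 \big\|\partial^\setu f_\setv(\bsx_\setu,\cdot)\big\|_{\calL^2_{\bsrho_{\setv\setminus\setu}}}^2 \,\le\, \Big(\prod_{j\in\setv\setminus\setu}C(\rho_j,\psi_j)\Big)\int_{\R^{|\setv\setminus\setu|}}\big|\partial^\setv f_\setv(\bsx_\setv)\big|^2\,\bspsi_{\setv\setminus\setu}(\bsx_{\setv\setminus\setu})\,\rd\bsx_{\setv\setminus\setu}.
\]

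To finish, I would multiply by $\bspsi_\setu(\bsx_\setu)/\gamma_\setu$, integrate over $\bsx_\setu$ (so that $\bspsi_\setu\,\bspsi_{\setv\setminus\setu}=\bspsi_\setv$), and use $\int_{\R^{|\setv|}}|\partial^\setv f_\setv|^2\bspsi_\setv\,\rd\bsx_\setv=\gamma_\setv\|f_\setv\|_{\calW_d}^2$ (from \eqref{eq:g_v_norm}) to bound the $\setu$-summand of $\|f_\setv\|_{\calH_d}^2$ by $\tfrac{\gamma_\setv}{\gamma_\setu}\prod_{j\in\setv\setminus\setu}C(\rho_j,\psi_j)\,\|f_\setv\|_{\calW_d}^2$; summing over $\setu\subseteq\setv$ and reindexing with $\setw:=\setv\setminus\setu$ gives the per-term estimate, whence \eqref{eq:equiv-d-dim} follows by summing over $\setv$ and replacing each coefficient by its maximum over $\setv\subseteq\setD$. (For product weights this maximum is attained at $\setv=\setD$ and equals $\prod_{j=1}^d(1+\gamma_j C(\rho_j,\psi_j))$.) I expect the main obstacle to be the combinatorial bookkeeping, together with the measure-theoretic care needed to justify — for almost every $\bsx_\setu$, simultaneously for all $\setu\subseteq\setv$ — that $\partial^\setu f_\setv(\bsx_\setu,\cdot)$ is a bona fide top-level element of the lower-dimensional space $\calW_{\setv\setminus\setu}$, so that Lemma~\ref{lem:Df_in_Wu} and the reproducing-kernel identities of Subsection~\ref{sec:ANOVA} may be applied pointwise; the commutation of $\partial^\setu$ with integration against $\rho_j$ (used both for the $\calH_d$-orthogonality and for the vanishing of the lower ANOVA terms) is the one place where a short separate argument is needed.
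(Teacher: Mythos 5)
Your argument is correct, and it reaches the paper's constant by a genuinely different route. The paper does not decompose $f$ itself: for each $\setu$ it applies the ANOVA decomposition to the partially differentiated functions $g^\setu=\partial^\setu f(\bsx_\setu,\cdot)\in\calW_{\setD\setminus\setu}$ (via Lemma~\ref{lem:Df_in_Wu}), uses the $\calL^2_{\bsrho}$-orthogonality \eqref{eq:anova-orthog} of that decomposition, bounds each piece $(g^\setu)_\setv$ in $\calL^2_{\bsrho_\setv}$ through the reproducing property and \eqref{eq:int-eta-yy}, re-expresses $\|(g^\setu)_\setv\|_{\calW_\setv}$ via the explicit ANOVA formula and the Leibniz rule of \cite[Theorem 4]{GKLS18} in terms of $\partial^{\setu\cup\setv}f$ (see \eqref{eq:huv-W-norm}), and finally resums over $\setw=\setu\cup\setv$ to recover $\|f\|_{\calW_d}^2$. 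You instead decompose $f$ into its ANOVA terms, use orthogonality of that decomposition in \emph{both} norms, and prove a per-term bound with constant $\sum_{\setw\subseteq\setv}\frac{\gamma_\setv}{\gamma_{\setv\setminus\setw}}\prod_{j\in\setw}C(\rho_j,\psi_j)$; this is the $d$-dimensional analogue of the double-orthogonality argument in Theorem~\ref{thm:equiv1D}, it makes the origin of the $\max_{\setv}$ transparent, and it gives sharp per-term information. The price is two extra facts the paper's route never needs: the $\calH_d$-orthogonality of the ANOVA terms, and the claim that $\partial^\setu f_\setv(\bsx_\setu,\cdot)$ is annihilated by every $\int_{\R}\cdot\,\rho_j\,\rd x_j$, $j\in\setv\setminus\setu$, so that \eqref{eq:g_v_norm}--\eqref{eq:norm-eta} apply to it directly; both rest on the same commutation of $\partial^\setu$ with integration against $\rho_j$ that the paper justifies by the Leibniz rule, so they are provable with the tools at hand, but they should be written out (and note that to expand $\|f\|_{\calH_d}^2=\sum_\setv\|f_\setv\|_{\calH_d}^2$ legitimately you should establish the per-term bound first, so that all cross inner products are finite and Fubini applies, and only then invoke orthogonality together with \eqref{eq:norm-decomp}).
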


\begin{proof}
As we explained before, $\|f\|_{\calW_d}^2 \le \|f\|_{\calH_d}^2$ follows
easily from the Cauchy--Schwarz inequality.

To prove the second inequality in \eqref{eq:equiv-d-dim} we use
Lemma~\ref{lem:Df_in_Wu} to reproduce the variables that are not
differentiated. First, for $f\in\calW_d$, $\setu \subseteq\setD$ and
$\bsx_\setu \in \R^{|\setu|}$, define the function $g^\setu : \R^{d -
|\setu|} \to \R$ by
\[
  g^\setu(\bsx_{\setD\setminus\setu}) \,\coloneqq\, \partial^{\setu}f(\bsx_{\setu}, \bsx_{\setD\setminus\setu}),
\]
where to ease the notation we omit the dependence of $g^\setu$ on
$\bsx_\setu$. Then the $\calH_d$ norm \eqref{eq:H-norm} of $f$ can be
written as
\begin{align} \label{eq:H-norm_u}
  \|f\|_{\calH_d}^2
  &\,=\, \sum_{\setu\subseteq\setD}
\frac{1}{\gamma_\setu}
\int_{\R^{|\setu|}} \bigg(
\int_{\R^{d-|\setu|}} \big|\partial^{\setu} f(\bsx_{\setu}, \bsx_{\setD\setminus\setu}) \big|^2\,
\bsrho_{\setD\setminus\setu}(\bsx_{\setD\setminus\setu}) \,\rd \bsx_{\setD\setminus\setu}\bigg)
\bspsi_{\setu}(\bsx_{\setu}) \,\rd \bsx_\setu
 \nonumber\\
&\,=\, \sum_{\setu\subseteq\setD}
\frac{1}{\gamma_\setu} \int_{\R^{|\setu|}} \|g^\setu\|_{\calL^2_{\bsrho_{\setD\setminus\setu}}}^2
\bspsi_\setu(\bsx_\setu) \,\rd \bsx_\setu,
\end{align}
where $\|\cdot\|_{\calL^2_{\bsrho_{\setD\setminus\setu}}}$ is the
$\calL^2$-norm with respect to the variables $\bsx_{\setD\setminus\setu}$
and weight function $\bsrho_{\setD\setminus\setu}$.

By Lemma~\ref{lem:Df_in_Wu}, for almost all $\bsx_{\setu} \in
\R^{|\setu|}$, we have $g^\setu \in \calW_{\setD\setminus \setu}$. Each
$g^\setu$ admits an ANOVA decomposition
\begin{equation*}
g^\setu \,=\, \sum_{\setv \subseteq \setD\setminus \setu}
(g^\setu)_\setv,
\end{equation*}
where $(g^\setu)_\setv \in \calW_\setv$ and which, by \eqref{eq:explicit},
can be written explicitly in terms of $g^\setu$, and thus in terms of $f$,
as
\begin{equation}
\label{eq:huv_explicit}
(g^\setu)_\setv(\bsx_\setv) \,=\,
\sum_{\setw \subseteq \setv} (-1)^{|\setv| - |\setw|}
\int_{\R^{d - |\setu| - |\setw|}} \partial^\setu f(\bsx)\,
\bsrho_{\setD\setminus(\setu \cup \setw)} (\bsx_{\setD\setminus(\setu \cup \setw)}) \,\rd \bsx_{\setD\setminus(\setu \cup \setw)}.
\end{equation}
Since \eqref{eq:strong} holds, the ANOVA decomposition is orthogonal
in the $\calL^2$-norm (see \eqref{eq:anova-orthog}), and so we can write
\begin{equation}
\label{eq:hu-L2-norm}
\int_{\R^{|\setu|}} \|g^\setu\|_{\calL^2_{\bsrho_{\setD\setminus\setu}}}^2
\bspsi_\setu(\bsx_\setu) \, \rd \bsx_\setu
\,=\, \sum_{\setv \subseteq \setD\setminus \setu}
\int_{\R^{|\setu|}} \|(g^\setu)_\setv\|_{\calL^2_{\bsrho_\setv}}^2
\bspsi_\setu(\bsx_\setu) \, \rd \bsx_\setu.
\end{equation}

Now, using the reproducing property \eqref{eq:g_v_reprod} in $\calW_\setv$
as well as \eqref{eq:norm-eta}, for all $\bsx_\setv \in \R^{|\setv|}$ with
$\setv \subseteq \setD \setminus \setu$, we have
\begin{align*}
 \big| (g^\setu)_\setv(\bsx_\setv) \big|^2
\,=\, \bigg\langle (g^\setu)_\setv, \gamma_\setv \prod_{j \in \setv} \eta_j(\cdot, y_j)
 \bigg\rangle_{\calW_\setv}^2
&\,\leq\, \gamma_\setv^2\,\|(g^\setu)_\setv\|_{\calW_\setv}^2\,
\Bigg\|\prod_{j \in \setv} \eta_j(\cdot, y_j) \Bigg\|_{\calW_\setv}^2 \\
&\,=\, \gamma_\setv\,\|(g^\setu)_\setv\|_{\calW_\setv}^2\,\prod_{j \in \setv} \eta_j(y_j, y_j).
\end{align*}
Hence, the $\calL^2_{\bsrho_\setv}$-norm of $(g^\setu)_\setv$ is bounded by
\begin{align}
\label{eq:huv-L2-norm}
\|(g^\setu)_\setv\|_{\calL^2_{\bsrho_\setv}}^2 \,&\leq\,
\gamma_\setv\, \|(g^\setu)_\setv\|_{\calW_\setv}^2\,
\prod_{j \in \setv} \int_{-\infty}^\infty \eta_j(y_j, y_j) \rho_j(y_j) \, \rd y_j
\nonumber\\
&=\,
\gamma_\setv\, \|(g^\setu)_\setv\|_{\calW_\setv}^2\,
\prod_{j \in \setv} C(\rho_j, \psi_j),
\end{align}
where we have used \eqref{eq:int-eta-yy}.

Next we use property \eqref{eq:g_v_norm} for the $\calW_\setv$-norm of
$(g^\setu)_\setv$ and substitute in the explicit formula
\eqref{eq:huv_explicit} for $(g^\setu)_\setv$, to obtain
\begin{align}
\label{eq:huv-W-norm}
&\|(g^\setu)_\setv\|_{\calW_\setv}^2
\,=\, \frac{1}{\gamma_\setv}
\int_{\R^{|\setv|}} \big|\partial^\setv(g^\setu)_\setv(\bsx_\setv)\big|^2\, \bspsi_\setv(\bsx_\setv)
\,\rd \bsx _\setv
\\
&=\, \frac{1}{\gamma_\setv}
\int_{\R^{|\setv|}} \bigg| \partial^\setv\bigg(\int_{\R^{d - |\setu| - |\setv|}}
\partial^{\setu} f(\bsx)
\bsrho_{\setD\setminus(\setu \cup \setv)}(\bsx_{\setD\setminus(\setu \cup \setv)}) \,\rd \bsx_{\setD\setminus(\setu \cup \setv)}\bigg)
\bigg|^2 \bspsi_\setv(\bsx_\setv) \, \rd \bsx_\setv
\nonumber\\
&=\, \frac{1}{\gamma_\setv}
\int_{\R^{|\setv|}} \bigg|\int_{\R^{d - |\setu| - |\setv|}}
\partial^{\setu \cup \setv} f(\bsx)\,
\bsrho_{\setD\setminus(\setu \cup \setv)}(\bsx_{\setD\setminus(\setu \cup \setv)}) \,\rd \bsx_{\setD\setminus(\setu \cup \setv)}
\bigg|^2 \bspsi_\setv(\bsx_\setv) \, \rd \bsx _\setv,
\nonumber
\end{align}
where in the last step we applied the Leibniz rule for differentiation
under the integral from \cite[Theorem 4]{GKLS18}.

Substituting \eqref{eq:huv-W-norm} into \eqref{eq:huv-L2-norm}, and in
turn into \eqref{eq:hu-L2-norm} and then \eqref{eq:H-norm_u}, we obtain
\begin{align*}
&\|f\|_{\calH_d}^2 \,\leq\,
\sum_{\setu \subseteq \setD}\frac{1}{\gamma_\setu}
\sum_{\setv \subseteq \setD \setminus \setu}
\bigg( \prod_{j \in \setv} C(\rho_j, \psi_j) \bigg)
\\
&\cdot
\int_{\R^{|\setu| + |\setv|}} \!\bigg|\int_{\R^{d - |\setu| - |\setv|}}
 \partial^{\setu \cup \setv} f(\bsx)\,
\bsrho_{\setD\setminus(\setu \cup \setv)}(\bsx_{\setD\setminus(\setu \cup \setv)}) \,\rd \bsx_{\setD\setminus(\setu \cup \setv)}
\bigg|^2 \bspsi_{\setu \cup \setv}(\bsx_{\setu \cup \setv}) \,\rd \bsx _{\setu \cup \setv} .
\end{align*}
Substituting $\setw = \setu \cup \setv$ and then rearranging the
sums, we can write this as
\begin{align*}
 \|f\|_{\calH_d}^2
 &\,\leq\,
 \sum_{\setw \subseteq \setD} \frac{1}{\gamma_\setw}
\Bigg(  \sum_{\setv \subseteq \setw}
 \frac{\gamma_\setw}{\gamma_{\setw \setminus \setv}}
 \prod_{j \in \setv} C(\rho_j, \psi_j) \Bigg) \\
&\qquad\cdot \int_{\R^{|\setw|}} \bigg|\int_{\R^{d - |\setw|}}
 \partial^{\setw} f(\bsx)\,
 \bsrho_{\setD\setminus\setw}(\bsx_{\setD\setminus\setw}) \,\rd \bsx_{\setD\setminus\setw}
 \bigg|^2 \bspsi_{\setw}(\bsx_{\setw}) \,\rd \bsx_{\setw} \\
&\,\leq \bigg(\max_{\setw \subseteq \setD} \sum_{\setv \subseteq \setw}
\frac{\gamma_\setw}{\gamma_{\setw \setminus \setv}} \prod_{j \in \setv}
C(\rho_j, \psi_j) \bigg) \|f\|_{\calW_d}^2,
\end{align*}
where in the first step we have multiplied and divided each term by
$\gamma_\setw$ to give the correct weights. Finally interchanging the
labels for $\setv$ and $\setw$ gives the required result.
\end{proof}

\section{Analysis of QMC with preintegration for option pricing}
\label{sec:option}

To conclude this paper we return to the application that served as our
initial motivation, namely, we use the equivalence from
Theorem~\ref{thm:equiv-d-dim} to obtain a rigorous error bound for a QMC
method combined with preintegration technique for approximating the fair
price of an option.

As a concrete example we consider the \emph{arithmetic-average Asian call
option} analysed in, e.g., \cite{GKS10,GKS13,GKS17note,GKLS18}. Without
going into the details here we simply recall that the problem can be
expressed, after an appropriate change of variables, as an integral of the
form
\begin{align} \label{eq:option1}
  I_d(f) \,=\, \int_{\bbR^d} f(\bsx)\,\bsrho(\bsx)\,\rd\bsx, \qquad
  f(\bsx) \,=\, \max(\phi(\bsx),0),
\end{align}
with $\bsrho(\bsx)$ being a product of standard normal density $\rho(x) =
\frac{1}{\sqrt{2\pi}}e^{-x^2/2}$, and
\begin{align} \label{eq:option2}
  \phi(\bsx)
 \,=\,
  \frac{1}{d} \sum_{\ell = 1}^d S_0 \exp \Big( (r - \tfrac{1}{2} \sigma^2) \tfrac{\ell\, T}{d}
  + \sigma \bsA_\ell\, \bsx\Big) \,-K,
\end{align}
where $S_0$ is the initial asset price, $K$ is the strike price, $r$ is
the risk-free interest rate, $\sigma$ is the volatility, $d$ is the number
of equal time steps with final time $T$, and $\bsA_\ell$ are the rows of a
matrix $A$ arising from some factorisation of the covariance matrix
$\Sigma = [\min(iT/d,jT/d)]_{i,j\in\setD} = AA^\tr$. See, e.g.,
\cite{GKS10} for three factorisation methods (\emph{Cholesky a.k.a.\
standard construction}, \emph{Brownian bridge construction},
\emph{principal components construction (PCA)}) that lead to different
matrices $A$. The maximum appears in \eqref{eq:option1} since an option is
worthless when its value is negative. This gives rise to a \emph{kink} in
the integrand $f$ even though $\phi$ is smooth.

An $N$-point \emph{randomly shifted lattice rule} (see e.g., \cite{DKS13})
approximates $I_d(f)$ by
\[
 Q_{d, N} (f) \,=\, \frac{1}{N} \sum_{n = 0}^{N - 1} f(\bst_n),
\quad
\text{with} \quad
\bst_n \coloneqq \bsPhi^{-1}\bigg(\bigg\{ \frac{n \bsz}{N} + \bsDelta \bigg\}\bigg),
\]
where $\bsz \in \N^d$ is the \emph{generating vector}, $\bsDelta \in [0,
1)^d$ is a uniformly distributed \emph{random shift}, $\{ \cdot \}$
denotes taking the fractional part of each component in a vector and
$\bsPhi^{-1}$ denotes applying the inverse cumulative normal distribution
function $\Phi^{-1}$ to each coordinate. \emph{If} the integrand $f$
belongs to the ANOVA space $\calW_d$, then a generating vector $\bsz$ can
be constructed to achieve a RMS error bound close to $\calO(1/N)$, see
\cite[Theorem 8]{NK14}. Unfortunately, the kink means that our $f$ lacks
the smoothness requirement to be in $\calW_d$.

The \emph{smoothing by preintegration} technique from \cite{GKLS18} goes
as follows. First we integrate out one strategically chosen variable, say,
$x_1$:
\[
 P_1 f(\bsx_{2:d}) \,\coloneqq\, \int_{-\infty}^\infty f(x_1, \bsx_{2:d})\, \rho(x_1) \, \rd x_1,
\]
which is either computed analytically or numerically by a 1-dimensional
quadrature rule to high accuracy. Then we apply a randomly shifted lattice
rule to the resulting $(d-1)$-dimensional function $P_1 f$. It has been
established in \cite{GKS13,GKLS18} that $P_1 f$ belongs to the Sobolev
space $\calH_{d-1}$ for all three factorisation methods mentioned earlier.
Indeed, the general theory in these papers applies to functions of the
form $f = \max(\phi,0)$, with a number of specific conditions on the
generic function $\phi$ and the density $\rho$, including
$(\partial^{d-2}/\partial x_1^{d - 2})\phi\in \calH_d$ and $\partial^1\phi
> 0$. All of these conditions have been verified for our specific function
\eqref{eq:option2}, see \cite[Thm~3 and Sec.~6]{GKLS18}. For standard and
Brownian bridge constructions these assumptions have also been verified
for any choice of preintegration variable $x_j$, whereas for principal
components construction these are only guaranteed for the variable $x_1$.

So, on the one hand we know that $P_1 f\in \calH_{d-1}$, and on the other
hand we have established in this paper that $\calH_{d-1}$ is equivalent to
$\calW_{d-1}$. We therefore conclude that $P_1 f\in \calW_{d-1}$, and
hence we can apply \cite[Theorem 8]{NK14} to bound the error.

\begin{theorem}
For the Asian option pricing problem
\eqref{eq:option1}--\eqref{eq:option2}, an $N$-point randomly shifted
lattice rule can be constructed for the preintegrated $(d-1)$-dimensional
function $P_1 f$ to achieve the RMS error bound
\[
 \sqrt{\bbE_\bsDelta\big[|I_d(f) - Q_{d-1,N} (P_1 f)|^2\big]}
 \,\leq\, C N^{-1 + \delta} \quad \text{for } \delta > 0,
\]
where $C$ depends on $\delta$ and the norm $\|P_1 f\|_{\calW_{d - 1}} <
\infty$.
\end{theorem}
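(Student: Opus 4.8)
The plan is to assemble three ingredients, two external and one internal: (a) the smoothing-by-preintegration result of \cite{GKS13,GKLS18}, which places $P_1 f$ in the Sobolev space $\calH_{d-1}$; (b) the norm equivalence $\calH_{d-1}=\calW_{d-1}$ under condition \eqref{eq:strong}, i.e.\ Theorem~\ref{thm:equiv-d-dim}, which transfers this membership to the ANOVA space $\calW_{d-1}$; and (c) the constructive QMC error bound of \cite[Theorem~8]{NK14}, valid for any integrand in $\calW_{d-1}$. Step (b) is exactly the bridge that was previously missing between (a) and (c), so once the three are lined up the theorem follows with almost no further work.

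I would carry this out in four short steps. First, fix a weight function $\psi$ for which the pair $(\rho,\psi)$ with $\rho$ the standard normal density satisfies the stronger condition \eqref{eq:strong}; for instance $\psi(x) = \exp(-\tfrac{1}{2\alpha}x^2)$ with any $\alpha>1$ works, as observed in Section~\ref{sec:1D}. With this choice both $\calH_{d-1}$ and $\calW_{d-1}$ are well defined and, by Theorem~\ref{thm:equiv-d-dim}, equivalent. Second, note that preintegration leaves the value of the integral unchanged: by Fubini's theorem
\[
 I_d(f) \,=\, \int_{\bbR^d} f(\bsx)\,\bsrho(\bsx)\,\rd\bsx
 \,=\, \int_{\bbR^{d-1}} P_1 f(\bsx_{2:d})\, \prod_{j=2}^d \rho(x_j)\,\rd\bsx_{2:d}
 \,=\, I_{d-1}(P_1 f),
\]
so that $I_d(f) - Q_{d-1,N}(P_1 f)$ is exactly the cubature error for the $(d-1)$-dimensional integrand $P_1 f$. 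Third, invoke \cite[Thm~3 and Sec.~6]{GKLS18} (building on \cite{GKS13}): the function $\phi$ of \eqref{eq:option2} satisfies all hypotheses of the preintegration theory --- in particular $\partial^1\phi>0$ and $(\partial^{d-2}/\partial x_1^{d-2})\phi \in \calH_d$ --- so that $f=\max(\phi,0)$ preintegrated in $x_1$ yields $P_1 f \in \calH_{d-1}$ with $\|P_1 f\|_{\calH_{d-1}}<\infty$ (for the PCA construction one must preintegrate the first variable, which is precisely what $P_1$ does; for the standard and Brownian bridge constructions any coordinate would do). By Theorem~\ref{thm:equiv-d-dim} this gives $P_1 f \in \calW_{d-1}$ with $\|P_1 f\|_{\calW_{d-1}} \le \|P_1 f\|_{\calH_{d-1}} < \infty$. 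Fourth, apply \cite[Theorem~8]{NK14}: since $P_1 f \in \calW_{d-1}$, a component-by-component construction produces a generating vector $\bsz$ for which the randomly shifted lattice rule satisfies $\sqrt{\bbE_\bsDelta[|I_{d-1}(P_1 f) - Q_{d-1,N}(P_1 f)|^2]} \le C\, N^{-1+\delta}$ for every $\delta>0$, with $C$ depending only on $\delta$ and linearly on $\|P_1 f\|_{\calW_{d-1}}$; combining with the first step replaces $I_{d-1}(P_1 f)$ by $I_d(f)$ and gives the claim.

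There is essentially no new obstacle here: all of the analytic difficulty already sits inside Theorem~\ref{thm:equiv-d-dim} and inside the verification of the preintegration hypotheses carried out in \cite{GKLS18}. The single point that deserves a line of care is the choice of $\psi$ in the first step --- it must simultaneously make \eqref{eq:strong} hold (so that Theorem~\ref{thm:equiv-d-dim} and \cite[Theorem~8]{NK14} apply) and be the weight function with respect to which $P_1 f$ is shown to lie in $\calH_{d-1}$. Since the smoothing results of \cite{GKS13,GKLS18} accommodate Gaussian-type weights of exactly this form, such a $\psi$ exists, and once it is fixed the chain (a)$\to$(b)$\to$(c) closes without further effort.
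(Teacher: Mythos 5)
Your proposal is correct and follows essentially the same chain of reasoning as the paper: the preintegration results of \cite{GKS13,GKLS18} give $P_1 f \in \calH_{d-1}$ (with the PCA caveat about the first variable), Theorem~\ref{thm:equiv-d-dim} transfers this to $P_1 f \in \calW_{d-1}$, and \cite[Theorem~8]{NK14} then yields the RMS bound, with $I_d(f)=I_{d-1}(P_1 f)$ by Fubini. Your extra remark about choosing $\psi$ so that \eqref{eq:strong} holds and matches the weight used in the smoothing theory is a sensible explicit note on a point the paper treats only implicitly.
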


To our knowledge this is the first rigorous error bound giving close to
$\calO(1/N)$ convergence for a QMC rule applied to an option pricing
problem.

Practical details on how to efficiently construct a lattice generating
vector for such option pricing problems, along with a full error analysis
that is explicit in how the constant depends on the dimension (including
how to choose the weight parameters $\{\gamma_\setu\}$) will be studied in
a future paper.

\section*{Acknowledgements}

The authors acknowledge the support of the Australian Research Council
under the Discovery Project DP210100831.
The authors also thank Andreas Griewank and Hernan Le\"ovey for valuable 
discussions that encouraged this work.

\bibliographystyle{plain}

\end{document}